\title[Ellis enveloping semigroup for almost canonical model sets]{Ellis enveloping semigroup for almost \\ canonical model sets of an Euclidean space}
\author{Jean-Baptiste Aujogue}
\address{Universidad de Santiago de Chile\\Dep. de Matem\'aticas, Fac. de Ciencia\\
Aladema 3363, Estaci\'on Central\\
Santiago\\
Chile}
\email{jean.baptiste@usach.cl}
\newtheorem{theo}{Theorem}[section]
\newtheorem{de}[theo]{Definition}
\newtheorem{prop}[theo]{Proposition}
\newtheorem{lem}[theo]{Lemma}
\newtheorem{as}{Assumption}
\begin{document}

\begin{abstract} We consider certain point patterns of an Euclidean space and calculate the Ellis enveloping semigroup of their associated dynamical systems. The algebraic structure and the topology of the Ellis semigroup, as well as its action on the underlying space, are explicitly described. As an example, we treat the vertex pattern of the Amman-Beenker tiling of the plane.
\end{abstract}

\maketitle

\section*{\large{\textbf{Introduction}}}


This article proposes to study certain aspects of dynamical systems associated with \textit{point patterns} of Euclidean space. The topic of point patterns arose in symbolic dynamics, and also concerns aperiodic tilings. Point patterns have been studied by numerous authors for the last thirty years after the discovery by Schetchmann et al of the physical materials now commonly called \textit{quasicrystals}. In this context, a point pattern of a Euclidean space $\mathbb{R}^d$ is thought of as an alloy, where points are understood as positions of atoms (or molecules or electrons) and the quasicrystalline structure then arises when a certain \textit{long range order} is observed on the disposition of points within the pattern.




\vspace{0.2cm}

A great success in the topic of point patterns is the possibility to handle a pattern $\Lambda _0$ of $\mathbb{R}^d$ by considering the \textit{dynamical system} associated to it. The system consists of a space $\mathbb{X}_{\Lambda_0}$, called the \textit{hull} of $\Lambda_0$, which is formed of all other point patterns that locally look like $\Lambda_0$ and is endowed with a suitable compact topology, together with an action of the space $\mathbb{R}^d$ by homeomorphisms. Natural properties of a pattern of geometric, combinatoric or statistical nature are then displayed by topological, dynamical or ergodic features in this dynamical system. This is particularly true for long range order on point patterns, where the counterpart seems to rely on the existence of \textit{eigenfunctions} for the associated dynamical system. For instance, within the class of substitutive point patterns, the \textit{Meyer property}, which is a strong form of internal order (Moody \cite{Mo}), is equivalent to the existence of a nontrivial eigenfunction for the associated dynamical system (Lee and Solomyak \cite{LeeSo}). This type of statement also exists outside the realm of substitution patterns (Kellendonk and Sadun \cite{KeSa}). Another example concerns the subclass of \textit{model sets}, which can be viewed as the most ordered aperiodic patterns. The property of being a model set is equivalent to being a Meyer set such that continuous eigenfunctions separate a residual subset of elements in its associated hull (see \cite{Auj}, Baake, Lenz and Moody \cite{BaaLenMo}, Lee and Moody \cite{LeeMo}). In other words, model sets are exactly the point patterns with the Meyer property and \textit{almost automorphic} associated dynamical system. A third striking result is that \textit{pure point diffractivity} of a pattern (Hof \cite{Hof}), with is truly of statistical nature (Moody \cite{Mo3}), is known to be equivalent to the existence of a basis of eigenfunctions for the Hilbert space provided by the hull together with a certain ergodic measure (there is a widely developed literature about this aspect of patterns, see for instance Lee, Moody and Solomyak \cite{LeeMoSo} and Baake and Lenz \cite{BaaLe} and references therein). These statements have been proven under various mild assumptions on the pattern considered.

\vspace{0.2cm}
A certain form of this eigenvalue problem for a point pattern can be addressed, from a topological point of view, by the knowledge of the \textit{Ellis enveloping semigroup} of its dynamical system $(\mathbb{X}, \mathbb{R}^d)$. This semigroup was introduced for dynamical systems by Ellis and Gottschalk \cite{EllGo} as a way to study actions of a group on a compact space from an algebraic point of view. In a series of papers, Glasner investigated this semigroup for fairly general dynamical systems (see the review Glasner \cite{Gl} and references therein), and he and Megrelishvili showed in \cite{GlMe} that the Ellis semigroup $E(\mathbb{X}, \mathbb{R}^d)$ obeys a dichotomy: It is either the sequential closure of the acting group $\mathbb{R}^d$ or contains a topological copy of the Stone-\v{C}ech compactification $\beta \mathbb{N}$ of the integers. The former situation admits several equivalent formulations, and when it occurs the underlying dynamical system is called \textit{tame}; see Glasner \cite{Gl0}. Tame systems are dynamically simple: Indeed it is proved in Glasner \cite{Gl2} that they are uniquely ergodic, almost automorphic and measurably conjugated with a Kronecker system. In particular, a point pattern with the Meyer property and a tame dynamical system must be a model set.

\vspace{0.2cm}

In this work we propose a qualitative description of the Ellis semigroup of dynamical systems associated with particular point patterns, the \textit{almost canonical model sets}. These particular patterns are relevant in the crystallographic sense, as well as very accessible mathematically: One can get a complete picture of the hull $\mathbb{X}_{\Lambda_0}$ of such patterns (Le \cite{Le}), as well as their associated $C^*$-Algebras (a recent source is Putnam \cite{Put}, see references therein), and also compute their cohomology and K-theory groups (Forrest, Hunton and Kellendonk \cite{FoHuKe}, G\"{a}hler, Hunton and Kellendonk \cite{GaHuKe} and Putnam \cite{Put}) as well as the asymptotic exponent of their complexity function (Julien \cite{Ju}). We show that in our situation it is possible to completely describe elements of the Ellis semigroup, their action onto the underlying space, as well as the algebraic and topological structure of this semigroup. The type of calculations made here can be compared with the calculations performed in Pikula \cite{Pik} about Sturmian and Sturmian-like systems (see also Glasner \cite[Example 4.5]{Gl}). We also show that for those dynamical systems the Ellis semigroup is of first class in the sense of the dichotomy of Glasner and Megrelishvili \cite{GlMe}, that is, almost canonical model sets have tame systems.

\section*{\large{\textbf{The contents of the paper}}}


To construct a model set of $\mathbb{R}^d$, one begins by considering a higher dimensional Euclidean space $\mathbb{R}^{n+d}$, together with a lattice $\Sigma$ in it, as well as an embedded $d$-dimensional slope, usually placed in an "irrational" manner, which is thought as the space $\mathbb{R}^d$ itself. Such an environment used to construct a model set is called a \textit{cut and project scheme}. The second step is to consider a suitable region $W$ of the Euclidean subspace $\mathbb{R}^n$ orthogonal to $\mathbb{R}^d$. The model set in question thereby emerges as the orthogonal projection to $\mathbb{R}^d$ of certain points in $\Sigma$, namely those that fall into the region $W$ when projected orthogonally onto $\mathbb{R}^n$. A model set is thus written as
\begin{align*} \Lambda _0:= \left\lbrace  \gamma ^{\Vert } \; \vert \; \gamma \in \Sigma \; \text{ and } \; \gamma ^{\perp} \in W \right\rbrace ,
\end{align*}
where $^{\Vert }$ and $^{\perp}$ denote the orthogonal projections onto $\mathbb{R}^d$ and $\mathbb{R}^n$ respectively. In the above context we will speak about a \textit{real model set} (see the discussion in Section $1$), the word "real" coming from the fact that the summand $\mathbb{R}^n$ used here to form the cut and project scheme is a Euclidean space.


\vspace{0.2cm}

The dynamical system $(\mathbb{X},\mathbb{R}^d)$ associated with a real model set is of very particular form: It is an \textit{almost automorphic} extension over a torus $$\mathbb{T}^{n+d}:=\mathbb{R}^{n+d}\diagup _{\Sigma }$$ (see the material of Section $2$). This property will be central in our task, and shows up in the consideration of a certain factor map, known as the \textit{parametrization map} (Baake, Hermisson and Pleasant \cite{BaaHePl} and Schlottmann \cite{Sch}),
\begin{align*}
\pi : \mathbb{X} \longrightarrow \mathbb{T}^{n+d}.
\end{align*}
This mapping also demonstrates that any pattern $\Lambda$ in the hull $\mathbb{X}$ of a model set $\Lambda _0$ is also a model set, such that if $[w,t]_\Sigma \in \mathbb{R}^{n+d}\diagup _{\Sigma }=\mathbb{T}^{n+d}$ is its image, then $\Lambda$ is determined, as model set, by the region $W+w$ in $\mathbb{R}^n$, next translated by the vector $t$ in $\mathbb{R}^d$. This is described in better details in first section of the main text.



\vspace{0.2cm}

The first step in determining the Ellis semigroup $E(\mathbb{X},\mathbb{R}^d)$ is to describe it as a \textit{suspension} of another (simpler) semigroup (see Section $3$). To that end we let $\Gamma$ be the subgroup of $\mathbb{R}^d$ obtained as the orthogonal projection of the lattice $\Sigma$ used to construct $\Lambda_0$ as a model set. $\Gamma$ is generally not a lattice in $\mathbb{R}^d$, and will even often be dense in $\mathbb{R}^d$, although it is always finitely generated. We now consider the collection $\Xi ^\Gamma$ of point patterns in $\mathbb{X}$ that are contained, as subsets of $\mathbb{R}^d$, in $\Gamma$. This subset of $\mathbb{X}$ remains stable under the action of any vector of $\mathbb{R}^d$ which lies in $\Gamma$, and when endowed with a suitable topology it gives rise to a new dynamical system $(\Xi ^\Gamma , \Gamma)$. We call this latter system the \textit{subsystem} associated with $\Lambda _0$. The space $\Xi ^\Gamma $ will have a locally compact totally disconnected topology, and as a result its Ellis enveloping semigroup $E(\Xi ^\Gamma , \Gamma)$ will be a locally compact totally disconnected topological space (for Ellis semigroup of dynamical systems over locally compact spaces, see Section $2$). The importance of this semigroup in our setting is highlighted by Theorem \ref{theo:suspension.ellis}, which yields a algebraic isomorphism and homeomorphism 
\begin{align*}
E(\mathbb{X},\mathbb{R}^d) \simeq  E(\Xi ^\Gamma , \Gamma)\times _{\Gamma}\mathbb{R}^d ,
\end{align*}
where the right-hand term is understood as a quotient of $E(\Xi ^\Gamma , \Gamma)\times \mathbb{R}^d$ under a natural diagonal action of $\Gamma$. This theorem shows in particular that the Ellis semigroup $E(\mathbb{X},\mathbb{R}^d)$ is in our context a \textit{matchbox manifold}: It is locally the product of an Euclidean open subset with a totally disconnected space. It also asserts that the non-trivial (and in particular the noncommutative) part of $E(\mathbb{X},\mathbb{R}^d)$ is displayed by the semigroup $E(\Xi ^\Gamma , \Gamma)$.

\vspace{0.2cm}

We will thus from now on focus on the calculation of $E(\Xi ^\Gamma , \Gamma)$. At first, we show the existence of an onto continuous semigroup morphism 
\begin{align*}
\Pi ^* : E(\Xi ^\Gamma , \Gamma) \longrightarrow \mathbb{R}^n .
\end{align*}


This morphism is closely related with the parametrization map presented above, and will allow us to understand the convergence of a net in $E(\Xi ^\Gamma , \Gamma)$ by studying how the corresponding net, via this morphism, converges in $\mathbb{R}^n$.

\vspace{0.2cm}

Our wish is to find a certain semigroup $S$, together with a certain semigroup morphism from $E(\Xi ^\Gamma , \Gamma)$ into $S$, such that the Ellis semigroup $E(\Xi ^\Gamma , \Gamma)$ embeds in the direct product $S\times \mathbb{R}^n$. To simplify the problem we let the \textit{almost canonical property} enter the game. This property consists of a condition on the region $W$ used to obtain $\Lambda _0$ as model set, that is, $W$ must be a polytope of $\mathbb{R}^n$ satisfying a particular condition (see Section $4$). Under the assumption that the region $W$ is almost canonical, together with the almost automorphic property observed on the dynamical system $(\mathbb{X}_{\Lambda _0}, \mathbb{R}^d)$, we are able to identify the correct semigroup $S$ as the \textit{face semigroup} associated with the polytope $W$ in $\mathbb{R}^n$ (see Sections $5$ and $6$ for our presentation and results).

\vspace{0.2cm}

We may briefly present the face semigroup $\mathfrak{T}_{W}$ associated with the polytope $W$ in $\mathbb{R}^n$ as follows: The polytope $W$ determines a finite collection of linear hyperplanes $\mathfrak{H}_{W}$ in $\mathbb{R}^n$, namely the ones that are parallel to at least one face of $W$. This collection in turn determines a stratification of $\mathbb{R}^n$ by cones, all being, for each hyperplane $H\in \mathfrak{H}_W$, included in $H$ or integrally part of one of the two possible complementary half-spaces. An illustration of this construction is provided in Section $7$, where $W$ is a regular octagon in $\mathbb{R}^2$. The face semigroup $\mathfrak{T}_{W}$ is set-theoretically the finite collection of cones resulting from this stratification process, together with a (noncommutative) semigroup product stating that the product $C.C'$ of two cones is the cone which the head of $C$ enters after being translated by small vectors of $C'$. The elements of $\mathfrak{T}_W$ are more conveniently described as "side maps", which consist of mappings from $\mathfrak{H}_W$ to the three symbol set $\lbrace -, 0,+\rbrace$, giving the relative position of any cone with respect to each hyperplane. This formalism has the advantage to allows for a concise and handy formulation of the product law on this semigroup (see Section $6$).

\vspace{0.2cm}
The embedding morphism
\begin{align*}
E(\Xi ^\Gamma , \Gamma) \longrightarrow  \mathfrak{H}_W\times \mathbb{R}^n
\end{align*}

comes from the observation that a neighborhood basis of any transformation $g\in E(\Xi ^\Gamma , \Gamma)$ is provided by the vector $w_g:= \Pi ^*(g)$ of $\mathbb{R}^n$, together with a certain cone $$C_g\in \mathfrak{T}_W ,$$ in the sense that a net in $\Gamma$ converges to $g$ in the Ellis semigroup $E(\Xi ^\Gamma , \Gamma)$ (such a net exists by construction) if and only if the corresponding net in $\mathbb{R}^n$ converges to $w_g$ and eventually lies in $C_g+w_g$. In this sense, the cone $C_g$ provides the direction a net must follow in order to converge to the transformation $g$. This allows us to calculate the corresponding image subsemigroup in $\mathfrak{H}_W\times \mathbb{R}^n$, which is the aim of Theorem \ref{theo:principal.interne}, and proves to be a finite disjoint union of \textit{subgroups} of $\mathbb{R}^n$. Moreover the topology of $E(\Xi ^\Gamma , \Gamma)$ is completely described by a geometric criterion of convergence for nets.

\vspace{0.2cm}

Finally, we fusion Theorems \ref{theo:suspension.ellis} and \ref{theo:principal.interne} to formulate our main theorem \ref{theo:principal}, which establishes the existence of an embedding semigroup morphism
\begin{align*}
E(\mathbb{X} , \mathbb{R}^d) \longrightarrow  \mathfrak{H}_W\times \mathbb{T}^{n+d}
\end{align*}


for which the image subsemigroup together with its topology are identified. Interestingly, this semigroup remains exactly the same for model sets issued after translating, dilating, or deforming the region $W$ as long as the hyperplanes determined by the faces are unchanged. As a byproduct of the previous analysis we show that the topology of the Ellis semigroup $E(\mathbb{X},\mathbb{R}^d)$ admits a first countable topology, and thus is the sequential closure of the acting group $\mathbb{R}^d$. We conclude this work by determining some algebraic features of this Ellis semigroup, as well as a picture of its underlying action on the space $\mathbb{X}$.

\section{\large{\textbf{Model sets and associated dynamical systems}}}\label{model sets}

\subsection{General definition of inter-model set}

To define what an (almost canonical) model set in $\mathbb{R}^d$ is (see \cite{Sch}, as well as \cite{Mo2} for a more detailed exposition), we consider first an environment used to construct it, namely a \textit{cut and project scheme}. This consists of a triple $(\mathsf{H}, \Sigma , \mathbb{R}^d)$ and a diagram

\includegraphics[trim = 4cm 22.8cm 5.5cm 5.2cm]{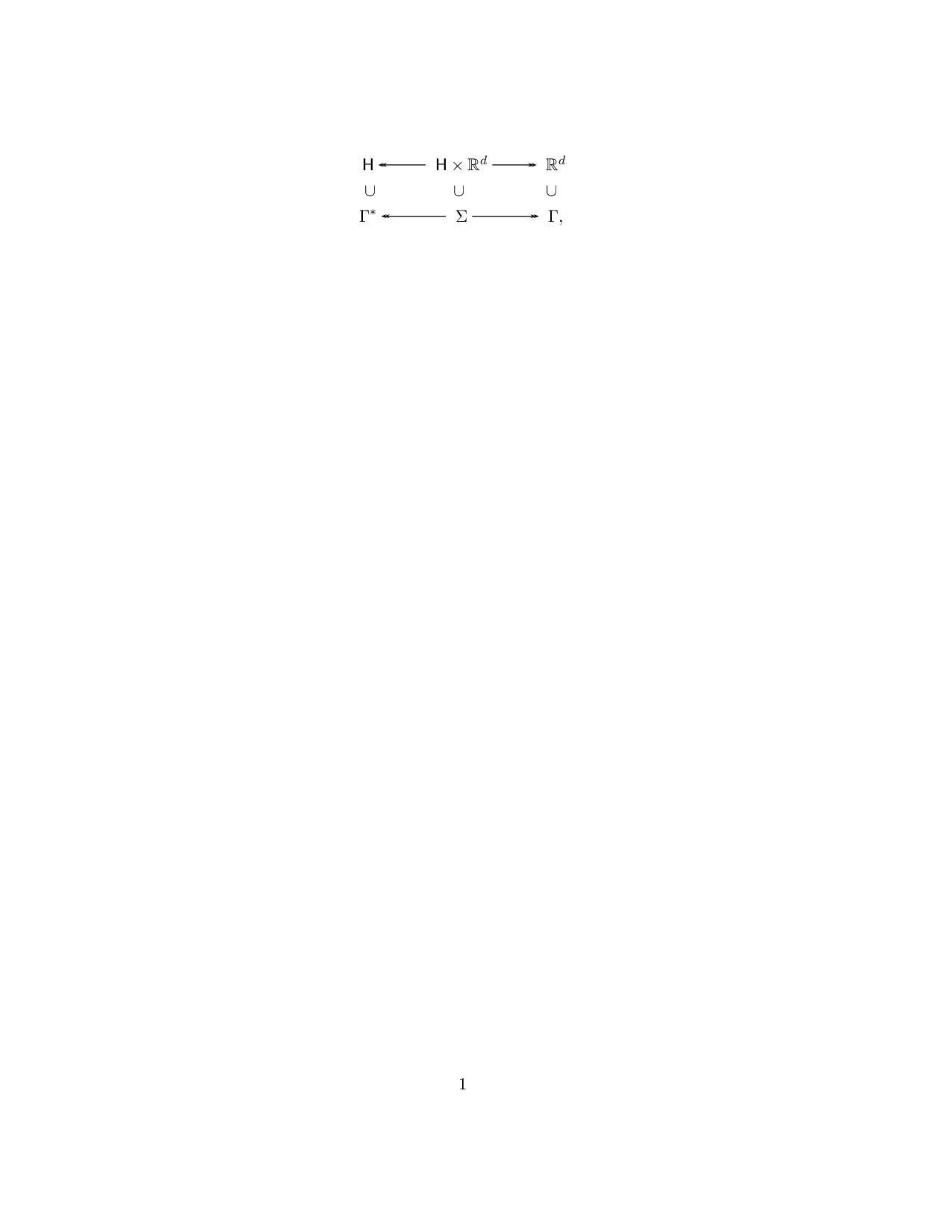}

where $\mathsf{H}$ is a locally compact Abelian group and

\begin{itemize}
\item[•] $\Sigma$ is a countable lattice in $\mathsf{H}\small{\times}\mathbb{R}^{d}$, ie a countable discrete and cocompact subgroup,
\item[•] the canonical projection $\pi _{\mathbb{R}^d}$ onto $\mathbb{R}^d$ is bijective from $\Sigma$ onto its image $\Gamma $,
\item[•] the image $\Gamma ^*$ of $\Sigma$ under the canonical projection $\pi _{\mathsf{H}}$ is a dense subgroup of $\mathsf{H}$.

\end{itemize}

\vspace{0.2cm}
Hence such an environment consists of an Euclidean space $\mathbb{R}^d$ embedded into $\mathsf{H}\times\mathbb{R}^{d}$ in an "irrational position" with respect to the lattice $\Sigma$. There is a well established formalism for these different ingredients: the space $\mathbb{R}^d$ is often called the \textit{physical space}, whereas the space $\mathsf{H}$ is called the \textit{internal space}. Moreover the morphism $\Gamma \longrightarrow \mathsf{H}$ which maps any $\gamma $ onto $\gamma ^*:= \pi _{\mathsf{H}}(\pi _{\mathbb{R}^d}^{-1}(\gamma ))\in \Gamma ^*$ is the \textit{*-map} of the cut and project scheme, whose graph is the lattice $\Sigma$. We will say that a cut and project scheme is \textit{real} whenever the internal space $\mathsf{H}$ is a finite dimensional real vector space $\mathbb{R}^n$.

\vspace{0.2cm}

We shall also consider a certain type of subset in the internal space $\mathsf{H}$, usually called a \textit{window}, which consists of a compact and topologically regular subset $W$, supposed \textit{irredundant} in the sense that the compact subgroup of elements of $w\in \mathsf{H}$ that satisfy $W+w = W$ is trivial. When $\mathsf{H}= \mathbb{R}^n$, this condition is immediately satisfied.

\vspace{0.2cm}

If we are given a cut and project scheme together with a window $W$ in its internal space, we may form a certain point pattern $\mathfrak{P}(W)$ in $\mathbb{R}^d$ by projecting onto the physical space the subset of points of the lattice $\Sigma$ lying within the \textit{strip} $W\times \mathbb{R}^d$. 
This is illustrated in Figure $1$ (see also \cite{BaaHePl}).

\vspace{0.2cm}
Figure $1$ presents the most simple real cut and project scheme one may consider, that is, with physical and internal spaces being $1$-dimensional, and with a lattice $\Sigma = \mathbb{Z}^2$ not crossing these spaces except at the origin. As window we consider the projection onto the internal space of the unit square in $\mathbb{R}^2$.

\includegraphics[trim = 4cm 18cm 5cm 4cm]{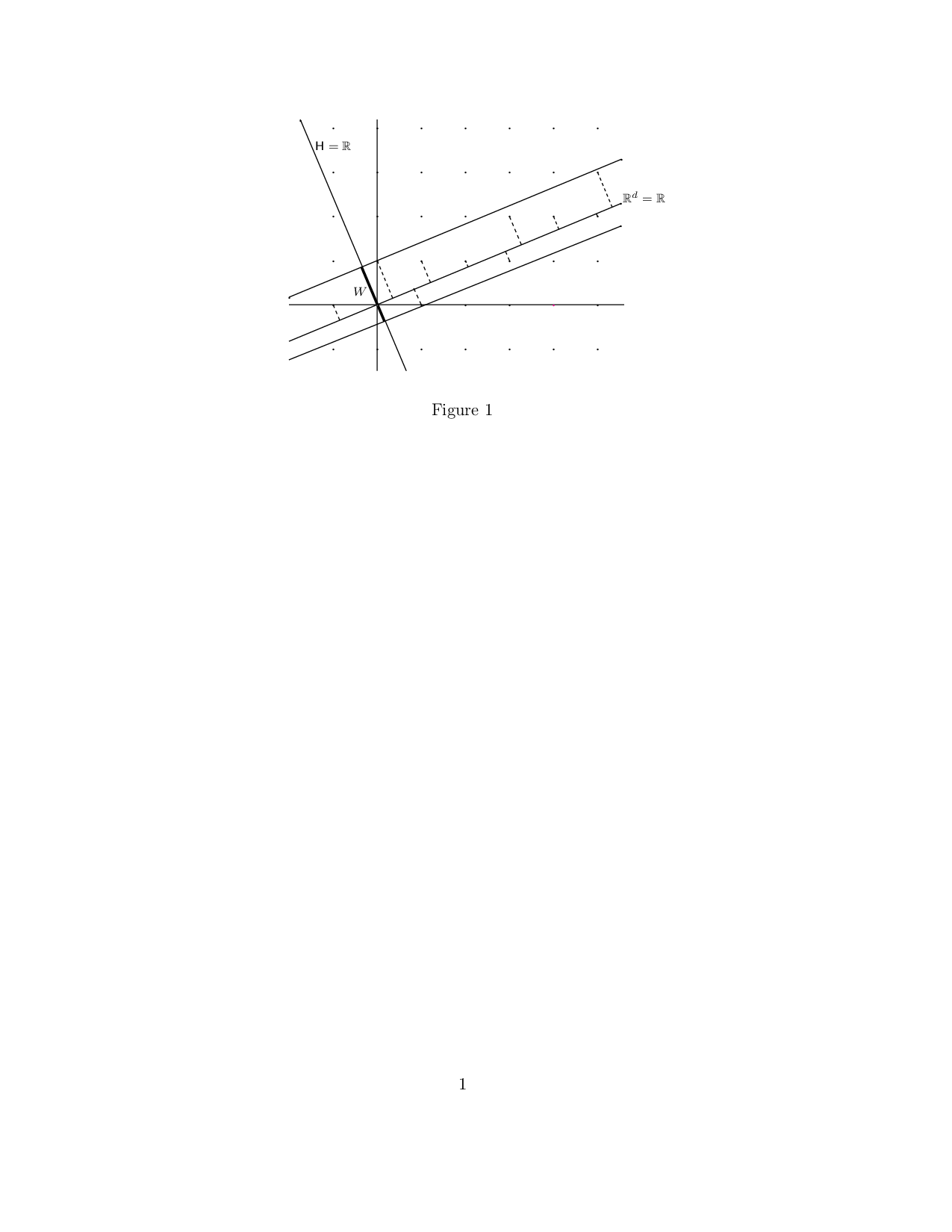}

The point pattern $\mathfrak{P}(W)$ may be written using the $*$-map as
\begin{align*} \mathfrak{P}(W) := \left\lbrace \gamma \in \Gamma \; \vert \; \gamma ^*\in W\right\rbrace . 
\end{align*}

We may allow ourselves to translate the resulting point pattern by any vector $t$ in the physical space $\mathbb{R}^d$, which we call here a \textit{physical translation}, or to translate the window $W$ by an element $w\in \mathsf{H}$, which we call an \textit{internal translation}. In both cases this leads to a new point pattern in $\mathbb{R}^d$. We now introduce the class of \textit{model sets} of $\mathbb{R}^d$ as follows:

\begin{de}\label{def:inter.model.set} An inter-model set $\Lambda$ associated to a cut and project scheme $(\mathsf{H}, \Sigma , \mathbb{R}^d)$ together with a window $W$ is a subset of $\mathbb{R}^d$ that satisfies
\begin{align*} \mathfrak{P}(w+ \stackrel{\circ }{W}) -t \subseteq \Lambda \subseteq \mathfrak{P}(w+ W) -t.
\end{align*}
\end{de}

An inter-model set is called \textit{regular} whenever the window $W$ used to construct it has boundary of Haar measure zero in $\mathsf{H}$. Due to the assumptions on the underlying cut and project scheme and on the window $W$, any inter model set is a \textit{Delone set}, that is a uniformly discrete and relatively dense subset of $\mathbb{R}^d$. In fact, it also satisfies the stronger property of being a \textit{Meyer set}, meaning that any inter-model set $\Lambda$ admits a uniformly discrete difference subset $\Lambda - \Lambda$ in $\mathbb{R}^d$. Most of the content of this article is about real cut and project schemes together with polytopic windows in their internal spaces, that hence provide regular inter-model sets.

\subsection{nonsingular model sets}

An important notion affiliated with a point pattern $\Lambda$ is its \textit{language}, namely the collection of all "circular-shaped" patterns appearing at sites of the point pattern:
\begin{align*}
\mathcal{L}_\Lambda := \left\lbrace (\Lambda -\gamma )\cap B(0,R) \, \vert \, \gamma \in \Lambda , \; R> 0\right\rbrace .
\end{align*}

Not all inter-model sets coming from a common cut and project scheme and window have same language. However, the class of \textit{nonsingular model sets}, also often called \textit{generic model sets}, do share a language:

\begin{de}\label{def:non.singular} A nonsingular model set is an inter-model set $\Lambda$ for which we have equalities
\begin{align*} \mathfrak{P}(w+ \stackrel{\circ }{W}) -t = \Lambda =\mathfrak{P}(w+ W) -t.
\end{align*}
\end{de}

The situation where such equality occurs for a given couple $(w,t)$ clearly only depends on the choice of $w\in \mathsf{H}$. We will then call an element $w\in \mathsf{H}$ nonsingular when the inter-model sets $\mathfrak{P}(w+ \stackrel{\circ }{W}) -t = \Lambda =\mathfrak{P}(w+ W) -t$ are nonsingular. Such a subset of nonsingular elements may easily be described: it consists of all $w\in \mathsf{H}$ where no point of the subgroup $\Gamma ^*$ of $\mathsf{H}$ enters the boundary $w+\partial W$ of the translated window $w+W$. It thus consists of the complementary subset
\begin{align*} NS:= \left[ \Gamma ^*-\partial W\right]  ^c .
\end{align*}
This set is always nonempty by the Baire category theorem, as $W$ was assumed topologically regular, hence with boundary of empty interior in $\mathsf{H}$, and $\Sigma$ (hence $\Gamma ^*$) was supposed to be countable. As already pointed out, the nonsingular model sets arising from a common cut and project scheme with window have a common language, which means that any pattern of some nonsingular model set appears elsewhere in all other nonsingular model sets. Denoting this language by $\mathcal{L}_{NS}$, we are led to consider its associated \textit{hull}.

\begin{de}\label{def:hull} Given a cut and project scheme and a window, and the language $\mathcal{L}_{NS}$ of any nonsingular model set arising from this data, the hull of this data is the collection 
\begin{align*} \mathbb{X}:= \left\lbrace  \Lambda \text{ point pattern of } \mathbb{R}^d \; \vert \mathcal{L}_\Lambda = \mathcal{L}_{NS} \right\rbrace  .
\end{align*}
We call a model set any point pattern within the hull $\mathbb{X}$ associated with some cut and project scheme and window.
\end{de}

The hull $\mathbb{X}$ associated with some cut and project scheme and window is also called the \textit{local isomorphism class} (or simply LI-class) of any model set within this hull.

\subsection{The hull as dynamical system}

There is a natural topology on the hull $\mathbb{X}$, which is metrizable and may be described by setting a basis of open neighborhoods of any point pattern $\Lambda \in \mathbb{X}$ to be (see \cite{Mo2})
\begin{align}\label{topology} \mathcal{U}_{K, \varepsilon }(\Lambda ):= \left\lbrace  \Lambda '\in \mathbb{X} \; \vert \; \exists \vert t \vert , \vert t'\vert < \varepsilon , \; (\Lambda -t)\cap K = (\Lambda '-t')\cap K \right\rbrace ,
\end{align}

where $K$ is any compact set in $\mathbb{R}^d$ and $\varepsilon >0$. This topology roughly means that two point patterns are close if they agree on a large domain about the origin up to small shifts. The hull $\mathbb{X}$, endowed with this topology, is a compact metrizable space, and is equipped with a natural action of $\mathbb{R}^d$ given by $\Lambda .t := \Lambda -t$, that is, by translating any point pattern. This provides a dynamical system $(\mathbb{X}, \mathbb{R}^d)$. To figure out what exactly this space consists of, we invoke the following beautiful result:

\begin{theo}\label{theo:parametrization.map} \cite{Sch} Let $\mathbb{X}$ be the hull associated with a cut and project scheme $(\mathsf{H},\Sigma,\mathbb{R}^d)$ and a window $W$. Then $\mathbb{X}$ is compact and the dynamical system $(\mathbb{X}, \mathbb{R}^d)$ is minimal. Each $\Lambda \in \mathbb{X}$ satisfies inclusions of the form
\begin{align*}
\mathfrak{P} (w_\Lambda +\mathring{W})-t_\Lambda \subseteq \Lambda \subseteq \mathfrak{P} (w_\Lambda +W)-t_\Lambda ,
\end{align*}
where $(w_\Lambda , t_\Lambda)\in \mathsf{H}\times \mathbb{R}^d$ is unique up to an element of $\Sigma$, thus defining a factor map
\begin{align*}
\pi : \mathbb{X}\longrightarrow  \mathsf{H}\times _{\Sigma} \mathbb{R}^d
\end{align*}

where $\mathsf{H}\times _{\Sigma} \mathbb{R}^d$ is the compact Abelian group quotient of $\mathsf{H}\times \mathbb{R}^d$ by the lattice $\Sigma$.\\ The map $\pi$ is injective precisely on the collection of nonsingular model sets of $\mathbb{X}$.
\end{theo}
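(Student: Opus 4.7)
The statement bundles several claims -- compactness and minimality of $\mathbb{X}$, existence of sandwich parameters $(w_\Lambda, t_\Lambda)$ for every $\Lambda$, their uniqueness modulo $\Sigma$, continuity of the resulting map $\pi$, and the injectivity criterion -- which I would establish in that order. Compactness and minimality are standard Delone-type arguments. Every pattern in $\mathbb{X}$ has language $\mathcal{L}_{NS}$, so $\mathbb{X}$ consists of uniformly Delone sets of finite local complexity, and a Cantor-diagonal extraction in the topology \eqref{topology} produces subsequential limits whose language remains $\mathcal{L}_{NS}$, yielding sequential compactness. Minimality follows from a repetitivity argument: for $\Lambda, \Lambda' \in \mathbb{X}$ and any radius $R$, the bounded pattern $\Lambda' \cap B(0,R)$ lies in $\mathcal{L}_{NS} = \mathcal{L}_\Lambda$, so some translate of $\Lambda$ matches $\Lambda'$ on $B(0,R)$, and letting $R \to \infty$ places $\Lambda'$ in the orbit closure of $\Lambda$.

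The heart of the proof is the construction of $\pi$. I would fix a non-singular model set $\Lambda_0 = \mathfrak{P}(W)$ (possible since $NS$ is nonempty, up to translating $W$) and study its orbit map $s \mapsto \Lambda_0 - s$, composed with the continuous group homomorphism $p \colon \mathbb{R}^d \to \mathsf{H} \times_\Sigma \mathbb{R}^d$, $s \mapsto [0,s]_\Sigma$, whose image is dense by density of $\Gamma^*$ in $\mathsf{H}$. The key lemma is an upper-semicontinuity statement: if $\Lambda_0 - s_n \to \Lambda$ and $p(s_n) \to [w,t]_\Sigma$, then $\Lambda$ satisfies the sandwich at $(w,t)$. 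Convergence of $p(s_n)$ provides $\gamma_n \in \Gamma$ with $\gamma_n^* \to -w$ and $s_n - \gamma_n \to t$, and rewriting $\Lambda_0 - s_n = \mathfrak{P}(W - \gamma_n^*) - (s_n - \gamma_n)$ reduces both inclusions to pointwise inspection on any compact set: if $\gamma \in \Gamma$ with $\gamma^* \in w + \mathring{W}$, then $\gamma^* + \gamma_n^* \in W$ for large $n$ by the interior hypothesis, placing $\gamma - t$ in the limit; conversely any limit point $\lambda$ is, by local finiteness (Meyer property), eventually a fixed $\gamma - t$ with $\gamma^* + \gamma_n^* \in W$, and passing to the limit yields $\gamma^* - w \in W$. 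Since this argument covers a dense orbit and $\mathbb{X}$ is compact, each $\Lambda \in \mathbb{X}$ inherits a valid pair.

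Uniqueness of $[w_\Lambda, t_\Lambda]_\Sigma$ is where the irredundancy of $W$ becomes essential. Given two valid pairs $(w_1,t_1),(w_2,t_2)$ for the same $\Lambda$, picking any $\lambda \in \Lambda$ shows $\delta := t_2 - t_1 \in \Gamma$; combining the lower bound at $(w_1,t_1)$ with the upper bound at $(w_2,t_2)$ and invoking density of $\Gamma^*$ to close up the resulting set inclusion yields $W + u \subseteq W$ with $u := (w_2 - w_1) - \delta^*$, and the symmetric argument gives $W \subseteq W + u$, so $W + u = W$ by compactness and hence $u = 0$ by irredundancy. This also shows the map $\pi \colon \Lambda \mapsto [w_\Lambda, t_\Lambda]_\Sigma$ is well-defined; continuity and equivariance are immediate from the construction, and surjectivity follows from closedness of the image of $\pi$ combined with density of $p(\mathbb{R}^d)$. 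Finally, for a non-singular $\Lambda$ the two sandwich sides coincide, so the $\pi$-fibre reduces to $\{\Lambda\}$, whereas for singular $\Lambda$ the strict inclusion admits distinct members of $\mathbb{X}$ obtained as limits of approaches from two sides of the window boundary. The principal technical hurdle is the careful passage between the set convergence $W - \gamma_n^* \to W + w$ in $\mathsf{H}$ and the discrete point-level inclusions in $\mathbb{R}^d$, which is precisely what forces the interior-versus-closure asymmetry in the sandwich.
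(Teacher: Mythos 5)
The paper does not prove this theorem: it is imported verbatim from Schlottmann \cite{Sch} (see also \cite{BaaHePl}, \cite{Mo2}), so there is no internal proof to compare against. Your proposal correctly reconstructs the standard argument from that literature, in the same order and with the same key mechanisms: FLC for compactness, the common language for minimality, the torus map obtained by following the dense orbit of a non-singular $\mathfrak{P}(W)$ and passing $p(s_n)$ to a limit in $\mathsf{H}\times_\Sigma\mathbb{R}^d$, irredundancy of $W$ for uniqueness of $[w_\Lambda,t_\Lambda]_\Sigma$ via $W+u\subseteq W\Rightarrow W+u=W$, and two-sided approach to the window boundary for non-injectivity at singular parameters. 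Two spots deserve slightly more care than you give them: closedness of $\mathbb{X}$ under limits (the inclusion $\mathcal{L}_{NS}\subseteq\mathcal{L}_\Lambda$ for a limit $\Lambda$ requires repetitivity of the non-singular model sets, which you invoke only for minimality), and the non-injectivity claim, where you must check that the two limits obtained by approaching a singular $w$ from the two sides of $\gamma^*-\partial W$ genuinely differ at the point $\gamma-t$ (uniform discreteness of $\mathfrak{P}(K)$ for a compact neighbourhood $K$ of $w+W$ does this) and that both limits are still elements of $\mathbb{X}$ with the same $\pi$-image. Neither issue is a flaw in the approach; both are handled by the same Meyer-property and compactness arguments you already use elsewhere.
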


By factor map we mean a continuous, onto and $\mathbb{R}^d$-equivariant map, where on the compact Abelian group $\mathsf{H}\times _{\Sigma} \mathbb{R}^d$ the space $\mathbb{R}^d$ acts through $[w,t]_{\Sigma}.s:= [w,t+s]_{\Sigma}$. In the context of real cut and project schemes the compact Abelian group is given by $\left[ \mathbb{R}^{n+d}\right] _{\Sigma} $, that is, it is a $(n+d)$-torus. In the theory of point patterns the above factor map is called the \textit{parametrization map}, and shows in particular that any model set of $\mathbb{X}$ is an inter-model set in the sense of Definition \ref{def:inter.model.set}. In fact, the collection $\mathbb{X}$ of model sets of a given cut and project scheme and window is precisely the collection of \textit{repetitive} inter-model sets arising from this data (see for instance \cite{Sch}).

\subsection{An explicit example}\label{subsection:explicit.example}

A well-known example of model set is the vertex point pattern of the famous \textit{Ammann-Beenker tiling}, from which an uncolored local pattern about the origin shows up as in Figure 2.

\includegraphics[ trim = 4cm 17cm 5cm 4.5cm]{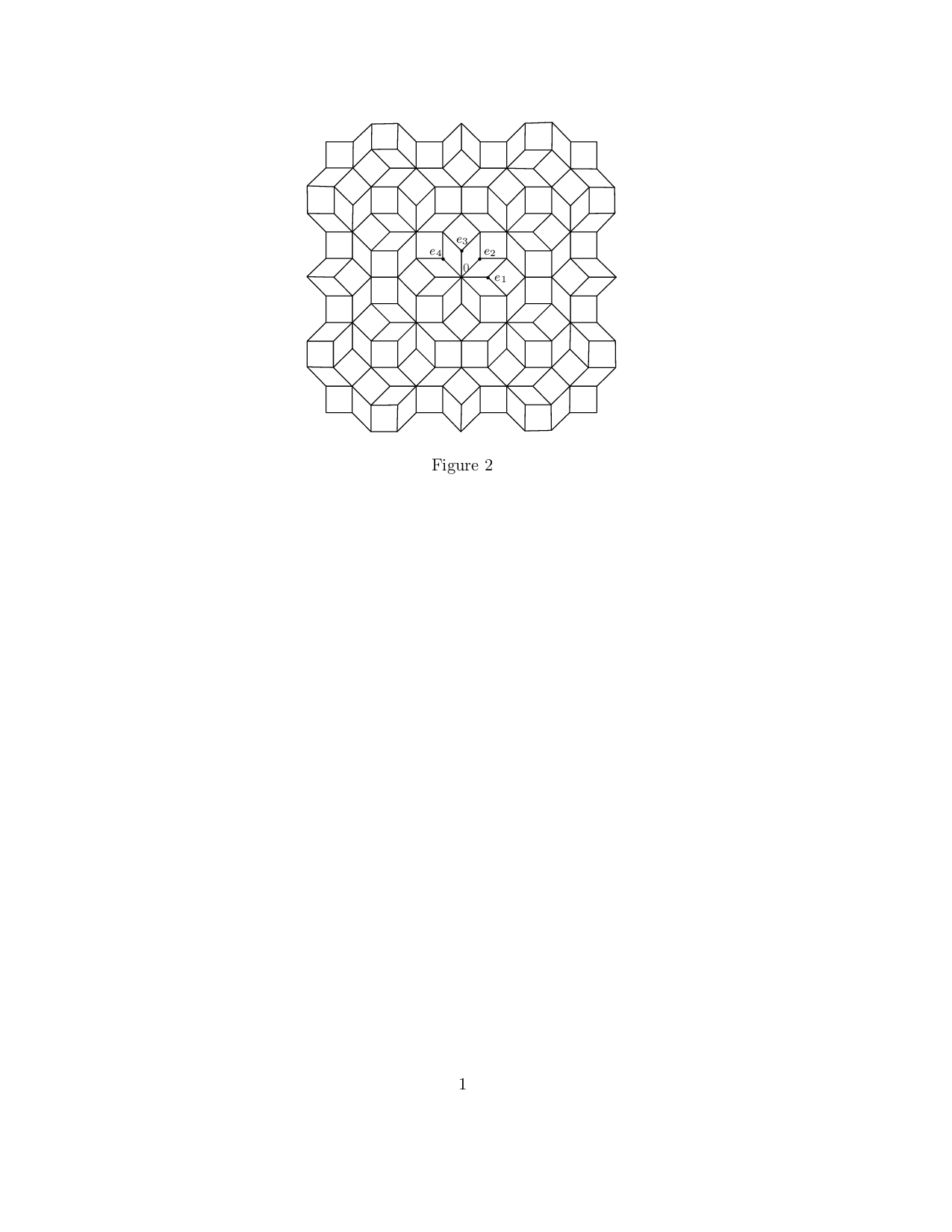}

We can set a cut $\&$ project scheme and window giving rise to the desired point pattern as follows: In a physical space $\mathbb{R}^2$ we embed the group $\Gamma $ algebraically generated by four vectors whose coordinates in an orthonormal basis are
\begin{align*} \begin{tabular}{c c c c} $e_1= (1,0),$ & $e_2 = (\frac{1}{\sqrt{2}},\frac{1}{\sqrt{2}}),$ & $e_3= (0,1),$ & $e_4 = (-\frac{1}{\sqrt{2}},\frac{1}{\sqrt{2}}).$
\end{tabular}
\end{align*} These four vectors are algebraically independent, and thus $\Gamma $ is isomorphic to $\mathbb{Z}^4$. Next we set the internal space $\mathbb{R}^2_{int}$ to be a $2$-dimensional real vector space, into which we define a $*-$map through the images of the four above vectors, reading in some orthonormal basis of $\mathbb{R}^2_{int}$ as
\begin{align*} \begin{tabular}{c c c c} $e_1^*= (1,0),\qquad e_2^* = (-\frac{1}{\sqrt{2}},\frac{1}{\sqrt{2}}),$ & $e_3^*= (0,-1),$ & $e_4^* = (\frac{1}{\sqrt{2}},\frac{1}{\sqrt{2}}),$
\end{tabular}
\end{align*}
The four vectors $\widetilde{e_i}:= (e_i,e_i^*)$, $i=1,2,3,4$, are linearly independant in $\mathbb{R}^2_{int}\times \mathbb{R}^2$ and thus form a lattice $\Sigma$, which projects onto a dense subgroup $\Gamma ^*$ in $\mathbb{R}^2_{int}$. This defines a real cut and project scheme. We choose the window to be canonical, that is, to be the projection to the internal space of the unit cube in $\mathbb{R}^2_{int}\times \mathbb{R}^2$ with respect to the basis $(\widetilde{e_1}, \widetilde{e_2}, \widetilde{e_3}, \widetilde{e_4})$. Hence we get a regular octagonal window $W_{oct}$ of the form in Figure 3.

Then the vertex point pattern appearing in the Ammann-Beenker tiling is given by the nonsingular model set $\mathfrak{P}\left( W_{oct} -\dfrac{e_1^*+e_2^*+e_3^*+e_4^*}{2}\right)$.



\includegraphics[ trim = 4cm 19cm 5cm 4.2cm]{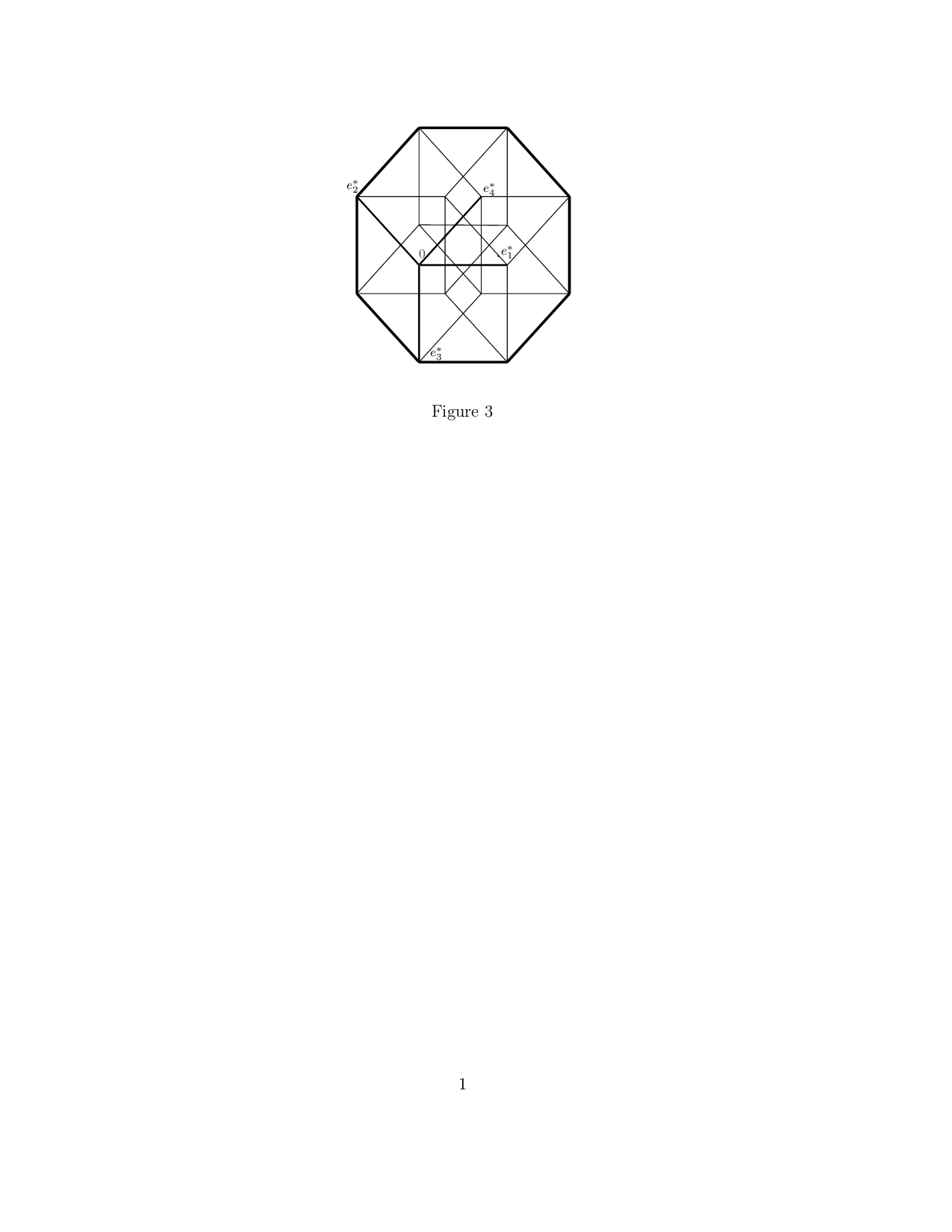}


\section{\large{\textbf{Ellis semigroups of dynamical systems}}}\label{section:ellis.semigroup}

\subsection{Ellis semigroup and equicontinuity}

Let us consider a compact dynamical system, that is, a compact (Hausdorff) space $\mathbb{X}$ together with an action of a group $T$ by homeomorphisms.

\begin{de}\label{def:ellis.semigroup} The Ellis semigroup $E(\mathbb{X},T)$ is the pointwise closure of the group of homeomorphisms given by the $T$-action in the space $\mathbb{X}^\mathbb{X}$ of self-mappings on $\mathbb{X}$.
\end{de}

The Ellis semigroup $E(\mathbb{X},T)$ is a family of transformations on the space $\mathbb{X}$ that are pointwise limits of homeomorphisms coming from the $T$-action, and is stable under composition. Moreover it is a compact (Hausdorff) space when endowed with the pointwise convergence topology coming from $\mathbb{X}^\mathbb{X}$. If the acting group is Abelian, then although the Ellis semigroup may not be itself Abelian, all of its transformations commute with any homeomorphism coming from the action.
\vspace{0.2cm}

The Ellis semigroup construction is functorial (covariant) in the sense that any onto continuous and $T$-equivariant mapping $\pi : \mathbb{X} \rightarrow \mathbb{Y}$ gives rise to an onto continuous semigroup morphism $\pi ^*: E(\mathbb{X},T) \rightarrow E(\mathbb{Y},T)$, satisfying $\pi (x.g)=\pi (x).\pi ^*(g)$ for any $x\in \mathbb{X}$ and any transformation $g\in E(\mathbb{X},T)$. Here we have written $x.g$ for the evaluation of a mapping $g$ at a point $x$. With this convention the Ellis semigroup is always a compact right-topological semigroup, that is, if some net $( h_\lambda )$ converges pointwise to $h$ then the net $( g.h_\lambda ) $ converges pointwise to $g.h$ for any $g$, where $g.h$ stands for the composition map which at each point $x$ reads $(x.g).h$.
\vspace{0.2cm}

Among the whole category of dynamical systems, the certainly most simple objects are the \textit{equicontinuous dynamical systems}. These are dynamical systems such that the family of homeomorphisms coming from the group action is equicontinous, and within the more specific class of compact minimal dynamical systems they exactly show up as the well-known class of \textit{Kronecker systems}. About these particular dynamical systems one has the following:

\begin{theo}\label{theo3} \cite{Aus} \cite{GlMeUs} Let $(\mathbb{X},T)$ be a minimal dynamical system over a compact metric space, with Abelian acting group. Then the following assertions are equivalent:
\vspace{0.1cm}
\begin{itemize}
\item[(1)] The dynamical system $(\mathbb{X},T)$ is equicontinuous.

\item[(2)] $E(\mathbb{X},T)$ is a compact group acting by homeomorphisms on $\mathbb{X}$.

\item[(3)] $E(\mathbb{X},T)$ is metrizable.

\item[(4)] $E(\mathbb{X},T)$ has left-continuous product.

\item[(5)] $E(\mathbb{X},T)$ is Abelian.

\item[(6)] $E(\mathbb{X},T)$ is made of continuous transformations.
\end{itemize}

\vspace{0.1cm}
In this case, $E(\mathbb{X},T)=\mathbb{X}$ as compact Abelian groups.
\end{theo}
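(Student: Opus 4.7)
The plan is to establish the central equivalence (1)$\Leftrightarrow$(2) first, then use (2) as a pivot from which (3)--(6) and the ``moreover'' statement follow together, and finally close the loop by showing that each of the weaker conditions (3)--(6) forces $E(\mathbb{X},T)$ to be a compact group of homeomorphisms.

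For (1)$\Rightarrow$(2) I would apply the Arzel\`a--Ascoli theorem. Equicontinuity together with compactness of $\mathbb{X}$ makes the $T$-action a relatively compact family in $C(\mathbb{X},\mathbb{X})$ for the uniform topology; since pointwise and uniform convergence agree on equicontinuous families, its pointwise closure $E(\mathbb{X},T)$ consists entirely of continuous maps and is also compact in the uniform topology. One then argues that the family of inverse homeomorphisms $\{t^{-1}:t\in T\}$ is again equicontinuous (using compactness of $\mathbb{X}$ and the abelian structure, so that inverses of equicontinuous homeomorphisms remain equicontinuous), whence $E(\mathbb{X},T)$ is stable under inversion and becomes a compact topological group acting by homeomorphisms. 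The converse (2)$\Rightarrow$(1) is immediate since a compact group of homeomorphisms acting on a compact metric space is automatically equicontinuous.

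Granted (2), the remaining statements and the ``moreover'' part are obtained together through the \emph{orbit map} at a fixed basepoint $x_0\in\mathbb{X}$, namely $\phi:E(\mathbb{X},T)\to\mathbb{X}$, $g\mapsto x_0.g$. This map is continuous (pointwise topology) and, by minimality together with density of $T$ in $E$, surjective; abelianness of $E$ and minimality force $\phi$ to be injective, because an element fixing $x_0$ would commute with the whole $T$-action and therefore fix a dense orbit. As a continuous bijection from a compact space to a Hausdorff space, $\phi$ is a homeomorphism, and transports the group structure of $E$ onto $\mathbb{X}$. This simultaneously yields metrisability (3), left-continuity of the product (4), abelianness (5), continuity of the transformations (6), and the identification $E(\mathbb{X},T)=\mathbb{X}$ as compact abelian groups.

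To close the loop one has to show that each of (3), (4), (5), (6) implies (1) or (2). The easiest implication is (6)$\Rightarrow$(4): left-multiplication $h\mapsto h.g$ is pointwise continuous whenever $g$ is a continuous self-map, while right-continuity is automatic in an Ellis semigroup. Then (4)$\Rightarrow$(2) uses the Ellis--Numakura lemma to locate an idempotent in the minimal ideal and minimality of the $T$-action to force this idempotent to be the identity, which in a compact jointly continuous semigroup makes every element invertible and hence $E$ a compact topological group. The implication (5)$\Rightarrow$(2) reduces to the same principle, since an abelian compact right-topological semigroup has a unique minimal ideal which must coincide with $E$ itself. Finally (3)$\Rightarrow$(1) combines metrisability with compactness of $E$ to promote pointwise convergence on $\mathbb{X}$ to uniform convergence, which is exactly equicontinuity. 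I expect the main obstacle to lie in this last block of implications, especially in upgrading separate continuity of the semigroup product to the joint continuity needed for the topological-group conclusion; this is where the deeper right-topological semigroup theory of \cite{Aus} and \cite{GlMeUs} is genuinely used, and it is also the reason these equivalences are non-trivial outside of the Kronecker framework.
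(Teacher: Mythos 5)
The paper offers no proof of this theorem: it is quoted as background from \cite{Aus} and \cite{GlMeUs}, so there is no in-paper argument to compare yours against, and I can only assess your sketch on its own terms. For most of the equivalences your outline is the standard one: Arzel\`a--Ascoli for (1)$\Rightarrow$(2), the orbit map $g\mapsto x_0.g$ for the identification $E(\mathbb{X},T)=\mathbb{X}$ (injectivity indeed uses that elements of $E$ are continuous and commute with the dense $T$-orbit, so it is legitimate under (2) or (6)), and the idempotent arguments for (4), (5), (6) $\Rightarrow$ (2). One local repair: commutativity (5) is not ``transported'' by the homeomorphism $E\cong\mathbb{X}$ --- a homeomorphism carries no algebraic information. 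You get (5) from the density of the Abelian group $T$ in $E$ together with separate continuity of the product on \emph{both} sides (one side is automatic, the other needs (2) or (6)); as written this step is a gloss, though an easily fixable one. Likewise (2)$\Rightarrow$(1) is not quite ``immediate'': pointwise compactness of a group of homeomorphisms does not by itself give equicontinuity; one routes through Ellis's joint continuity theorem.

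The genuine gap is your treatment of (3)$\Rightarrow$(1). Metrisability of $E(\mathbb{X},T)$ in the pointwise topology does not let you ``promote pointwise convergence to uniform convergence'': it does not even imply that the elements of $E$ are continuous (a convergent sequence of continuous maps together with a discontinuous pointwise limit is a compact metrisable subset of $\mathbb{X}^{\mathbb{X}}$), and a pointwise-compact metrisable family need not be equicontinuous. If the implication were a compactness exercise, the tame/non-tame dichotomy recalled in Section 2 of the paper would be vacuous. In fact (3)$\Rightarrow$(1) for minimal metric flows is precisely the main theorem of \cite{GlMeUs}, and its proof goes through fragmentability and a Namioka-type joint continuity argument, which is of a completely different nature from the rest of your sketch; the difficulty is also not, as you suggest, in upgrading separate to joint continuity of the product. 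You should either cite \cite{GlMeUs} for this implication outright or reproduce their argument; the step as you have written it would fail.
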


Here the compact Abelian group structure of a compact minimal equicontinuous system $(\mathbb{X},T)$ with Abelian acting group is only determined by the choice (which is arbitrary) of one element $\mathfrak{e}\in \mathbb{X}$ which plays the role of a unit, from which the group structure extends that of $T$ mapped on the dense orbit $\mathfrak{e}.T$. In this case the equality $E(\mathbb{X},T)=\mathbb{X}$ is achieved by identifying a transformation $g\in E(\mathbb{X},T)$ with $\mathfrak{e}.g$ in $\mathbb{X}$.
\vspace{0.2cm}

Outside the scope of equicontinuous systems, the Ellis semigroup is a quite complicated object as it is formed of mappings neither necessarily continuous nor invertible, and is not commutative. However a general construction allows us to attach to any compact dynamical system a particular factor:

\begin{theo}\label{equ} Let $(\mathbb{X},T)$ be a compact dynamical system. There exist an equicontinuous dynamical system $(\mathbb{X}_{eq}, T)$ together with a factor map $\pi : \mathbb{X} \rightarrow \mathbb{X}_{eq}$ such that any equicontinuous factor of $(\mathbb{X},T)$ factors through $\pi$.
\end{theo}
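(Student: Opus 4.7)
The plan is to construct $(\mathbb{X}_{eq}, T)$ as the universal equicontinuous quotient, obtained by amalgamating all equicontinuous factors of $(\mathbb{X}, T)$ into a single product. This is a standard ``initial object'' argument in the category of compact $T$-systems.

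First, I would fix a set (rather than a proper class) of representatives for the isomorphism classes of equicontinuous factors of $(\mathbb{X},T)$. Since each such factor is a continuous image of $\mathbb{X}$, its cardinality is bounded, so the collection can be indexed as $\{ \pi_\alpha : \mathbb{X} \twoheadrightarrow \mathbb{Y}_\alpha \}_{\alpha \in A}$; this set is non-empty because the one-point system is a trivial equicontinuous factor. Next I would form the combined map $\Phi := (\pi_\alpha)_{\alpha \in A} : \mathbb{X} \to \prod_{\alpha \in A} \mathbb{Y}_\alpha$, which is continuous and equivariant for the diagonal $T$-action on the product, and set $\mathbb{X}_{eq} := \Phi(\mathbb{X})$ (already compact, hence closed, by compactness of $\mathbb{X}$). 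Let $\pi : \mathbb{X} \twoheadrightarrow \mathbb{X}_{eq}$ be the corestriction of $\Phi$, which is by construction an onto continuous $T$-equivariant map.

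The heart of the proof is to check that $(\mathbb{X}_{eq}, T)$ is itself equicontinuous. Each $\mathbb{Y}_\alpha$ is compact Hausdorff and so carries a unique compatible uniformity $\mathcal{U}_\alpha$, and the product $\prod_\alpha \mathbb{Y}_\alpha$ carries the (also unique) product uniformity whose basic entourages are of the form
\begin{equation*}
V_{F, (U_\alpha)} := \bigl\{ (x, y) \; \big| \; (x_\alpha, y_\alpha) \in U_\alpha \text{ for every } \alpha \in F \bigr\},
\end{equation*}
with $F \subset A$ finite and $U_\alpha \in \mathcal{U}_\alpha$. Given such an entourage, equicontinuity of the $T$-action on each $\mathbb{Y}_\alpha$, $\alpha \in F$, provides a finite intersection of entourages whose $T$-orbit lies inside $V_{F,(U_\alpha)}$; this shows that the diagonal $T$-action on $\prod \mathbb{Y}_\alpha$ is equicontinuous, and equicontinuity is then inherited by the closed $T$-invariant subspace $\mathbb{X}_{eq}$.

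Finally, the universal property is essentially tautological from the construction: if $\rho : \mathbb{X} \twoheadrightarrow \mathbb{Z}$ is any equicontinuous factor, choose an index $\alpha \in A$ with $(\mathbb{Z},T) \cong (\mathbb{Y}_\alpha, T)$ and $\rho$ identified with $\pi_\alpha$; then the coordinate projection $p_\alpha : \prod_\beta \mathbb{Y}_\beta \twoheadrightarrow \mathbb{Y}_\alpha$ restricted to $\mathbb{X}_{eq}$ is a factor map $\overline{p_\alpha} : \mathbb{X}_{eq} \twoheadrightarrow \mathbb{Y}_\alpha$ satisfying $\overline{p_\alpha} \circ \pi = \pi_\alpha = \rho$, which is the desired factorization through $\pi$. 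The main obstacle I expect is the third step: one must be careful that equicontinuity genuinely passes through possibly uncountable products of compact Hausdorff spaces, which relies on the fact that basic entourages of a product uniformity involve only finitely many coordinates.
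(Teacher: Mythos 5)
The paper states Theorem \ref{equ} without proof, citing it as the classical existence of the maximal equicontinuous factor, so there is no argument of the author's to compare yours against. Your construction --- embedding $\mathbb{X}$ into the product of a set of representatives of its equicontinuous factors via the combined map, taking the image, and checking equicontinuity through basic entourages of the product uniformity (which involve only finitely many coordinates) --- is the standard proof and is correct. The one point worth tightening is in the first and last steps: the representatives should be chosen among equicontinuous factor \emph{maps} (equivalently, among closed $T$-invariant equivalence relations on $\mathbb{X}$ with equicontinuous quotient, which manifestly form a set as subsets of $\mathbb{X}\times\mathbb{X}$), not merely among abstract isomorphism classes of systems; otherwise the phrase ``$\rho$ identified with $\pi_\alpha$'' is not justified, since a given system $\mathbb{Y}_\alpha$ could arise as a factor through inequivalent maps, and the coordinate projection would then not yield the required factorization of $\rho$ itself.
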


The space $\mathbb{X}_{eq}$ with its $T$ -action is called the \textit{maximal equicontinuous factor} of $(\mathbb{X},T)$, and it is a Kronecker system whenever $(\mathbb{X},T)$ is topologically transitive. From Theorem \ref{theo3} one has $E(\mathbb{X}_{eq},T) = \mathbb{X}_{eq}$ as compact groups, and from the functorial property of the Ellis semigroup the quotient factor map $\pi$ from $\mathbb{X}$ onto its maximal equicontinuous factor gives rise to an onto and continuous semigroup morphism 
\begin{align*} \pi ^*:E(\mathbb{X},T)\longrightarrow \mathbb{X}_{eq}.
\end{align*}

\subsection{The tame property}

The following statement is obtained from \cite[Theorem 6.1, proof of Theorem 6.3]{GlMeUs}.

\begin{theo}\label{theo:dichotomy} \cite{GlMeUs} The Ellis semigroup $E(\mathbb{X},T)$ of a dynamical system over a compact metric space is either the sequential closure of the acting group $T$ in $\mathbb{X}^{\mathbb{X}}$ or contains a topological copy of the Stone-\v{C}ech compactification $\beta \mathbb{N}$ of the integers.
\end{theo}

The first case of this dichotomy admits several different formulations (see Glasner \cite{Gl} and Glasner, Megrelishvili and Uspenskij \cite{GlMeUs} and references therein) and whenever it occurs the underlying dynamical system is called \textit{tame}. If a compact metric dynamical system admits an Ellis semigroup with first countable topology then it is automatically a tame system. The tame property is related to the following notion:

\begin{de} A compact dynamical system $(\mathbb{X},T)$ is almost automorphic if the factor map $\pi : \mathbb{X} \rightarrow \mathbb{X}_{eq}$ possesses a one-point fiber.
\end{de}

\begin{theo}\label{theo:glasner} \cite{Gl2} If a compact metric minimal dynamical system with Abelian acting group is tame, then it is almost automorphic.
\end{theo}


In case of metrizability of the space $\mathbb{X}$ Veech showed in \cite[Lemma 4.1]{Vee} that any almost automorphic system in fact has a residual subset of one-point fibers with respect to the mapping $\pi$. In the situation of a hull $\mathbb{X}$ of model sets, the factor map $\pi$ onto the maximal equicontinuous factor is precisely the parametrization map of Theorem \ref{theo:parametrization.map}, and thus $(\mathbb{X}, \mathbb{R}^d)$ is almost automorphic since $\pi$ is one-to-one on a nonempty subclass of $\mathbb{X}$ (the nonsingular model sets). Even more is true: The hull $\mathbb{X}$ consists of regular model sets (meaning that the region $W$ as its boundary of null Haar measure in $\mathsf{H}$) if and only if the map $\pi$ is one-to-one above a full haar measure subset of $\mathbb{X}_{eq}$ \cite[Theorem 5]{BaaLenMo}.

\subsection{Ellis semigroup for locally compact dynamical systems}

We wish to include here two elementary results about the Ellis semigroup one may define for dynamical systems over locally compact spaces. Let $\mathbb{X}$ be a locally compact space together with an action of a group $T$ by homeomorphisms, and as in the compact case, set the Ellis semigroup $E(\mathbb{X},T)$ to be the pointwise closure in the product space $\mathbb{X}^\mathbb{X}$ of the group of homeomorphisms coming from the $T$--action. In order to extend some results available in the compact case to this setting we consider the one-point compactification $\hat{\mathbb{X}}$ of $\mathbb{X}$, endowed with a $T$--action by homeomorphisms so that the infinite point remains fixed through any such homeomorphism. Let us denote by $\mathcal{F}_{\mathbb{X}}$ the subset of $\hat{\mathbb{X}}^{\hat{\mathbb{X}}}$ of transformations mapping $\mathbb{X}$ into itself and keep the point at infinity fixed, endowed with relative topology. Then $\mathcal{F}_{\mathbb{X}}$ is a semigroup which is isomorphic and homeomorphic with the product space $\mathbb{X}^\mathbb{X}$, and under this identification
\begin{align*} E(\mathbb{X},T)= E(\hat{\mathbb{X}},T)\cap \mathcal{F}_{\mathbb{X}}.
\end{align*}
Observe that $E(\mathbb{X},T)$ is, as in the compact flow case, a right-topological semigroup containing $T$ as a dense subgroup (or rather the subsequent group of homeomorphisms). The following is a general fact, whose proof for compact dynamical systems can be found in \cite{Aus}:

\begin{prop}\label{prop:map.ellis.loc.compact} Let $\pi : \mathbb{X}\longrightarrow \mathbb{Y}$ be a continuous, proper, onto, and $T$--equivariant map between locally compact spaces. Then there exists a continuous, proper, and onto morphism $\pi ^*: E(\mathbb{X},T) \longrightarrow E(\mathbb{Y},T)$ that satisfies the equivariance condition $\pi (x.g)= \pi (x).\pi ^*(g)$ for any $x\in \mathbb{X}$ and $g\in E(\mathbb{X},T)$.
\end{prop}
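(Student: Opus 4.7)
The plan is to reduce everything to the compact case by passing to one-point compactifications, and to use the properness of $\pi$ at the single critical place to ensure orbit points do not escape to infinity in the limit.

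\emph{Setup of the reduction.} Since $\pi$ is proper, it extends to a continuous, onto, $T$-equivariant map $\hat{\pi} : \hat{\mathbb{X}} \twoheadrightarrow \hat{\mathbb{Y}}$ between the one-point compactifications, by setting $\hat{\pi}(\infty_{\mathbb{X}}) = \infty_{\mathbb{Y}}$ and letting $T$ act on each compactification with infinity fixed. Applying the already-known compact-flow version of the statement (see \cite{Aus}) to $\hat{\pi}$ produces a continuous, onto semigroup morphism $\hat{\pi}^* : E(\hat{\mathbb{X}},T) \twoheadrightarrow E(\hat{\mathbb{Y}},T)$ satisfying $\hat{\pi}(x.g) = \hat{\pi}(x).\hat{\pi}^*(g)$ for all $x \in \hat{\mathbb{X}}$ and $g \in E(\hat{\mathbb{X}},T)$.

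\emph{Restriction to the locally compact semigroups.} Using the identification $E(\mathbb{X},T) = E(\hat{\mathbb{X}},T) \cap \mathcal{F}_{\mathbb{X}}$ recorded just before the statement, I would verify that $\hat{\pi}^*$ sends $E(\mathbb{X},T)$ into $E(\mathbb{Y},T)$. For $g \in E(\mathbb{X},T)$ and $y \in \mathbb{Y}$, surjectivity of $\pi$ gives $y = \pi(x)$ for some $x \in \mathbb{X}$; equivariance then yields $y.\hat{\pi}^*(g) = \pi(x.g) \in \mathbb{Y}$ since $x.g \in \mathbb{X}$, and $\infty_{\mathbb{Y}}.\hat{\pi}^*(g) = \pi(\infty_{\mathbb{X}}.g) = \infty_{\mathbb{Y}}$. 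Hence $\hat{\pi}^*(g) \in \mathcal{F}_{\mathbb{Y}}$, so the restriction $\pi^* := \hat{\pi}^*|_{E(\mathbb{X},T)}$ is a well-defined semigroup morphism $E(\mathbb{X},T) \to E(\mathbb{Y},T)$, automatically continuous (as the topologies are pointwise convergence inherited from the ambient compact spaces) and still satisfying the equivariance $\pi(x.g) = \pi(x).\pi^*(g)$.

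\emph{Surjectivity and properness.} Both of the remaining properties follow from one subnet argument that crucially uses the properness of $\pi$. Suppose $(g_\lambda)$ is a net in $E(\mathbb{X},T)$ with $\pi^*(g_\lambda) \to h$ in $E(\mathbb{Y},T)$; by compactness of $E(\hat{\mathbb{X}},T)$ some subnet $(g_{\lambda'})$ converges pointwise on $\hat{\mathbb{X}}$ to some $g \in E(\hat{\mathbb{X}},T)$, and $\hat{\pi}^*(g) = h$ by continuity. For each $x \in \mathbb{X}$ the net
\begin{align*}
\pi(x.g_{\lambda'}) \;=\; \pi(x).\pi^*(g_{\lambda'}) \;\longrightarrow\; \pi(x).h \;\in\; \mathbb{Y}
\end{align*}
lies eventually in a compact subset of $\mathbb{Y}$; by properness of $\pi$ its preimage is compact in $\mathbb{X}$, so $x.g_{\lambda'}$ cannot converge to $\infty_{\mathbb{X}}$ and the limit $x.g$ belongs to $\mathbb{X}$. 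Thus $g \in \mathcal{F}_{\mathbb{X}}$, hence $g \in E(\mathbb{X},T)$ and $g_{\lambda'} \to g$ in $E(\mathbb{X},T)$ (since $\mathbb{X}$ is open in $\hat{\mathbb{X}}$). Applying this to a defining net $(t_\lambda) \subseteq T$ with $t_\lambda \to h$ gives the surjectivity of $\pi^*$; applying it to an arbitrary net in $(\pi^*)^{-1}(K)$ with $K \subseteq E(\mathbb{Y},T)$ compact gives the compactness of $(\pi^*)^{-1}(K)$, i.e.\ the properness of $\pi^*$. The main -- and really the only nontrivial -- obstacle is this last step: the danger that pointwise limits in the locally compact setting push orbit points to infinity, which is exactly what properness of $\pi$ rules out through the equivariance.
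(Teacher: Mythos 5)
Your proof is correct, and it follows the paper's reduction exactly through the first two steps: pass to one-point compactifications, apply the compact-flow result to obtain $\hat{\pi}^*$, and restrict. Where you diverge is in the endgame. The paper notes that $\hat{\pi}^{-1}(\star _\mathbb{Y})=\lbrace \star _\mathbb{X}\rbrace$ (this is where properness of $\pi$ is consumed), and that together with the equivariance identity this forces the full set equality $E(\mathbb{X},T)=(\hat{\pi }^*)^{-1}(E(\mathbb{Y},T))$ --- not merely the inclusion $\hat{\pi}^*(E(\mathbb{X},T))\subseteq E(\mathbb{Y},T)$ that you verify. From that single identity, surjectivity of $\pi ^*$ is immediate (since $\hat{\pi}^*$ is already onto $E(\hat{\mathbb{Y}},T)$), and properness follows because the preimage of a compact subset of $E(\mathbb{Y},T)$ is a closed, hence compact, subset of the compact space $E(\hat{\mathbb{X}},T)$ which happens to lie entirely inside $E(\mathbb{X},T)$. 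You instead prove only the forward inclusion and then recover surjectivity and properness by a subnet argument, invoking properness of $\pi$ a second time to prevent orbit points $x.g_{\lambda '}$ from escaping to $\star_{\mathbb{X}}$ in the limit. Both routes are valid; the paper's is shorter and localizes the use of properness to one fact about $\hat{\pi}$, while yours makes the dynamical content explicit --- exactly what could go wrong with pointwise limits over a non-compact phase space --- at the cost of some net bookkeeping.
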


\begin{proof} Denote by $\star _\mathbb{X} $ and $\star _\mathbb{Y}$ the respective points at infinity in the compactified spaces. Since $\pi $ is continuous and proper, it extends to a continuous and onto map $\widehat{\pi }: \widehat{\mathbb{X}}\rightarrow \widehat{\mathbb{Y}}$, such that $\widehat{\pi }^{-1}(\star _\mathbb{Y})= \lbrace \star _\mathbb{X} \rbrace$. Obviously $\widehat{\pi }$ is $T$--equivariant with respect to the extended $T$--actions. Then there exists a continuous and onto morphism $\widehat{\pi }^*: E(\widehat{\mathbb{X}},T) \rightarrow E(\widehat{\mathbb{Y}},T)$, satisfying the equivariance equality $\widehat{\pi }(x.g)= \widehat{\pi }(x).\widehat{\pi }^*(g)$ for any $x\in \widehat{\mathbb{X}}$ and $g\in E(\widehat{\mathbb{X}},T)$.
The later equivariance condition implies that a transformation $g$ of $E(\widehat{\mathbb{X}},T)$ lies in $\mathcal{F}_{\mathbb{X}}$ if and only if $\widehat{\pi }^*(g)$ lies in $\mathcal{F}_{\mathbb{X}}$: it follows that $E(\mathbb{X},T)= (\widehat{\pi }^*)^{-1}(E(\mathbb{Y},T))$. Restricting the morphism onto $E(\mathbb{X},T)$ gives the map, together with the onto property.
Finally a compact set of $E(\mathbb{Y},T)$ must be compact in $E(\widehat{\mathbb{Y}},T)$, as it is easy to check, so have a compact inverse image in $E(\widehat{\mathbb{X}},T)$ under $\widehat{\pi }^*$. This latter is entirely included in $E(\mathbb{X},T)$, so is compact for the relative topology on $E(\mathbb{X},T)$. This gives the properness.
\end{proof}

Observe that $\pi ^*(t)=t$ holds for any $t\in T$.
As in the compact setting, if the acting group $T$ is Abelian then any induced homeomorphism commutes with any mapping in $E(\mathbb{X},T)$. To end this subsection we state without proof an easy property of locally compact Kronecker systems:

\begin{prop}\label{prop:equi.loc.compact} If $T$ is a dense subgroup of a locally compact Abelian group $\mathbb{G}$, $T$ acting by translation, then $E(\mathbb{\mathbb{G}},T)$ is topologically isomorphic with $\mathbb{G}$, where any $g\in \mathbb{G}$ is identified with its translation map in $E(\mathbb{\mathbb{G}},T)$.
\end{prop}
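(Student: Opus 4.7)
The plan is to exhibit the isomorphism in both directions simultaneously: the map $\Phi:\mathbb{G}\to E(\mathbb{G},T)$, $g\mapsto \tau_g$ (translation by $g$), and the evaluation map $\mathrm{ev}_0:E(\mathbb{G},T)\to \mathbb{G}$, $\phi\mapsto \phi(0)$, which should be mutually inverse continuous semigroup morphisms. The key observations are density of $T$ in $\mathbb{G}$, joint continuity of addition on $\mathbb{G}$, and the constraint built into the definition $E(\mathbb{G},T)=E(\hat{\mathbb{G}},T)\cap \mathcal{F}_{\mathbb{G}}$, which guarantees that any $\phi\in E(\mathbb{G},T)$ sends $\mathbb{G}$ into $\mathbb{G}$.

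First, I would verify that $\Phi$ takes values in $E(\mathbb{G},T)$. Given $g\in\mathbb{G}$, density of $T$ yields a net $(t_\lambda)$ in $T$ with $t_\lambda\to g$; since $x+t_\lambda\to x+g$ in $\mathbb{G}$ for every fixed $x$ by continuity of the group law, the translations $\tau_{t_\lambda}$ converge pointwise to $\tau_g$. Hence $\tau_g$ lies in the pointwise closure $E(\mathbb{G},T)$. Next, $\Phi$ is trivially a group (hence semigroup) morphism since $\tau_{g+h}=\tau_g\circ\tau_h$, and this composition matches the Ellis semigroup convention $(x.\tau_g).\tau_h=(x+g)+h$. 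Continuity of $\Phi$ is immediate: if $g_\lambda\to g$ in $\mathbb{G}$, then $\tau_{g_\lambda}(x)=x+g_\lambda\to x+g=\tau_g(x)$ for every $x$.

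For surjectivity, I would take $\phi\in E(\mathbb{G},T)$ and a net $(t_\lambda)$ in $T$ with $\tau_{t_\lambda}\to \phi$ pointwise. Evaluating at $0$ gives $t_\lambda\to \phi(0)$ in $\hat{\mathbb{G}}$; but $\phi(0)\in\mathbb{G}$ because $\phi\in\mathcal{F}_{\mathbb{G}}$, and since $\mathbb{G}$ is open in $\hat{\mathbb{G}}$ the net is eventually trapped in $\mathbb{G}$ and converges there. Setting $g:=\phi(0)$, continuity of addition yields $\tau_{t_\lambda}(x)=x+t_\lambda\to x+g$, and by uniqueness of pointwise limits $\phi(x)=x+g=\tau_g(x)$. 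Thus $\phi=\Phi(\phi(0))$, so $\Phi$ is a bijection with inverse $\mathrm{ev}_0$. Continuity of $\mathrm{ev}_0$ is built into the pointwise topology.

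The only mildly delicate point — and where one must be careful — is the ``finite limit'' step: ensuring that the net $t_\lambda=\tau_{t_\lambda}(0)$ does not escape to $\star_\mathbb{G}$ at infinity, which would leave $\phi(0)$ undefined in $\mathbb{G}$. This is handled precisely by the definition of $E(\mathbb{G},T)$ restricted to $\mathcal{F}_{\mathbb{G}}$, enforcing $\phi(0)\in\mathbb{G}$. Once this is cleared, the rest is formal; there is no need here to appeal to the properness machinery of Proposition \ref{prop:map.ellis.loc.compact}, since the group structure of $\mathbb{G}$ does all the work.
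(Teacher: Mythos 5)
Your proof is correct. The paper states this proposition explicitly without proof (``we set without proof the following easy property on locally compact Kronecker systems''), so there is nothing to compare against; your route --- exhibiting $\Phi\colon g\mapsto\tau_g$ and $\mathrm{ev}_0\colon\phi\mapsto\phi(0)$ as mutually inverse continuous semigroup morphisms, with the single non-formal point being that $\phi(0)$ lies in $\mathbb{G}$ rather than escaping to the point at infinity --- is exactly the intended elementary argument, and you correctly isolate and discharge that one delicate step via the definition $E(\mathbb{G},T)=E(\hat{\mathbb{G}},T)\cap\mathcal{F}_{\mathbb{G}}$.
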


\section{\large{\textbf{The internal system of a hull of model sets}}}\label{internal system}

\subsection{Internal system}

What we introduce here is an analogue in the hull $\mathbb{X}$ of the internal space $\mathsf{H}$ we may find in the compact Abelian group $\mathsf{H}\times _{\Sigma} \mathbb{R}^d$ (the torus $\left[ \mathbb{R}^{n+d}\right] _{\Sigma}$ in case of a real cut $\&$ project scheme). We call this analogue the \textit{internal system} of a hull of model sets. The consideration of this particular space is not new (it appeared in \cite{FoHuKe} as well as in the formalism of $C^*$--algebras in \cite{Put}), although it is often not explicitly mentioned, and we record here the main aspects of this space.

\begin{de}\label{def:internal.system} Let $\mathbb{X}$ be the hull of model sets associated with a cut $\&$ project scheme $(\mathsf{H},\Sigma,\mathbb{R}^d)$ and a window $W$. Then its internal system is the subclass $\Xi ^\Gamma$ of point patterns that are supported on the structure group $\Gamma$ in $\mathbb{R}^d$, that is,
\begin{align*} \Xi ^\Gamma := \left\lbrace \Lambda \in \mathbb{X} \, \vert \; \Lambda \subset \Gamma \right\rbrace  .
\end{align*}
\end{de}

According to Theorem \ref{theo:parametrization.map}, any model set admits inclusions of the form stated in Definition \ref{def:inter.model.set}, and we can see that the subclass $\Xi ^\Gamma$ consists exactly of the model sets for which these inclusions read
\begin{align*} \mathfrak{P}(w+ \stackrel{\circ }{W}) \subseteq \Lambda \subseteq \mathfrak{P}(w+ W).
\end{align*}
Equivalently, $\Xi ^\Gamma$ is the subclass of model sets in $\mathbb{X}$ whose image under the parametrization map $\pi$ of Theorem \ref{theo:parametrization.map} is of the form $[w,0]_{\Sigma}$ in the compact Abelian group $\mathsf{H}\times _{\Sigma} \mathbb{R}^d$. On the other hand, there exists a natural morphism mapping any element $w$ of the internal space $\mathsf{H}$ to $[w,0]_{\Sigma}$ in $\mathsf{H}\times _{\Sigma} \mathbb{R}^d$, which is one-to-one and continuous. This suggests the existence of a mapping from the internal system $\Xi ^\Gamma$ onto the internal space $\mathsf{H}$ of the cut $\&$ project scheme. However, similar to the fact that $\mathsf{H}$ is in general not topologically conjugated with its image in $\mathsf{H}\times _{\Sigma}\mathbb{R}^d$, one cannot just consider the topology of $\mathbb{X}$ restricted on $\Xi ^\Gamma$. Rather, we consider on the internal system the topology whose basis of open neighborhoods around any $\Lambda \in \Xi ^\Gamma$ is
\begin{align}\label{topology.internal}\mathcal{U}_K(\Lambda):= \left\lbrace \Lambda '\in \Xi ^\Gamma \; \vert \;  \Lambda \cap K = \Lambda '\cap K \right\rbrace ,
\end{align}
where $K$ is any compact set in $\mathbb{R}^d$. This means that two point patterns are close in $\Xi ^\Gamma$ if they exactly match on a large domain about the origin. On the internal system equipped with the above topology, we consider the action of the group $\Gamma$ by homeomorphisms given by translation on each model set, so that one obtains a dynamical system $(\Xi ^\Gamma , \Gamma)$. From the minimality of the dynamical system $(\mathbb{X},\mathbb{R}^d)$ by Theorem \ref{theo:parametrization.map}, this dynamical system is also minimal. Of particular importance is the sub-collection, often called the \textit{transversal}, of point patterns containing the origin
\begin{align*} \Xi := \left\lbrace \Lambda \in \mathbb{X} \, \vert \; 0\in \Lambda  \subset \Gamma \right\rbrace .
\end{align*}
Any model set containing the origin must be entirely included in the structure group $\Gamma$, that is, $\Xi $ is a subset of the internal system $\Xi ^\Gamma$. A fact of fundamental importance is that $\Xi$ is in fact a \textit{clopen set}, that is, a subset which is both open and closed in the internal system $\Xi ^\Gamma$: Indeed any accumulation point pattern of $\Xi$ must possess the origin in its support, and thus is actually an element of $\Xi$, and on the other hand for each $\Lambda \in \Xi$ and any radius $R>0$ the collection $ \mathcal{U}_{B(0,R)}(\Lambda )$ is an open neighborhood of $\Lambda$ in $\Xi ^\Gamma$ that is clearly contained in the transversal $\Xi$.

\vspace{0.2cm}
About the topology of the transversal one may observe that there is a one to one correspondence between circular-shaped local configuration of radius $R$ in the language $\mathcal{L}_{NS}$ and subsets in $\Xi$ of the form $ \mathcal{U}_{B(0,R)}(\Lambda )$, for the same radius $R$ and $\Lambda$ chosen in $\Xi$. Thus compactness of $\Xi$ is rephrased by the existence of only a finite number of such circular-shaped patterns for any radius $R$, a property called \textit{finite local complexity} for the underlying point patterns in $\mathbb{X}$. This property holds in our context \cite{Mo2, Sch}, so that $\Xi$ is a compact open subset of the internal system $\Xi ^\Gamma$.

\begin{prop}\label{prop:topology.internal.system.tot.disconnected} The internal system $\Xi ^\Gamma$ of a hull of model sets is a totally disconnected locally compact topological space, and a subbasis for its topology is formed by all $\Gamma$--translates of $\Xi$ and its complementary set $\Xi ^c$.
\end{prop}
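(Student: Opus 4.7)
My plan is to establish the three assertions --- local compactness, the sub-basis description, and total disconnectedness --- in an order in which each builds on the previous. I would start with local compactness: every $\Lambda\in\Xi^\Gamma$ is a nonempty Delone subset of $\Gamma$, so choosing any $\gamma\in\Lambda$ yields $\Lambda-\gamma\in\Xi$ and hence $\Lambda\in\Xi.(-\gamma)$. This gives the cover $\Xi^\Gamma=\bigcup_{\gamma\in\Gamma}\Xi.\gamma$, and since $\Xi$ has been shown to be compact and clopen and $\Gamma$ acts by homeomorphisms for the topology (\ref{topology.internal}), each $\Xi.\gamma$ is a compact clopen neighborhood of each of its points. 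In particular $\Xi^\Gamma$ is locally compact, and the $\Gamma$-translates of $\Xi$ together with their complements are clopen subsets of $\Xi^\Gamma$, so they are at least plausible candidates for a sub-basis.

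The heart of the proof is then to check that these sets really do generate the topology (\ref{topology.internal}). Given a basic neighborhood $\mathcal{U}_K(\Lambda_0)$ of $\Lambda_0$, the task is to exhibit a finite intersection $V$ of $\Gamma$-translates of $\Xi$ and $\Xi^c$ with $\Lambda_0\in V\subseteq\mathcal{U}_K(\Lambda_0)$. The delicate point is that $\Gamma\cap K$ may be dense in $K$, so the naive attempt of requiring $\gamma\in\Lambda'$ for every $\gamma\in\Lambda_0\cap K$ and $\gamma\notin\Lambda'$ for every $\gamma\in(\Gamma\cap K)\setminus\Lambda_0$ amounts to infinitely many constraints. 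The resolution is to re-center at a pattern point: since $\Lambda_0$ is relatively dense one may pick $\gamma_0\in\Lambda_0$ and $R_0>0$ with $K\subseteq B(\gamma_0,R_0)$. Because $\gamma_0$ is a pattern point, the centered patch $Q_{j_0}:=(\Lambda_0-\gamma_0)\cap B(0,R_0)$ belongs to a \emph{finite} list $\{Q_1,\dots,Q_M\}$ of admissible centered patches provided by finite local complexity. I would then set $F_1:=Q_{j_0}+\gamma_0\subseteq\Lambda_0$, and for each $j\neq j_0$ with $Q_j\setminus Q_{j_0}\neq\emptyset$ pick one witness point in $(Q_j\setminus Q_{j_0})+\gamma_0$, collecting these into a finite set $F_2$. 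The corresponding finite intersection
\begin{align*}
V \; := \; \bigcap_{\gamma\in F_1}\Xi.(-\gamma)\,\cap\,\bigcap_{\delta\in F_2}(\Xi.(-\delta))^c
\end{align*}
contains $\Lambda_0$, and any $\Lambda'\in V$ must satisfy $\gamma_0\in\Lambda'$, so that $(\Lambda'-\gamma_0)\cap B(0,R_0)$ is forced to lie in the finite list $\{Q_1,\dots,Q_M\}$; the conditions imposed by $F_1$ and $F_2$ then exclude every $Q_j\neq Q_{j_0}$, whence $\Lambda'\cap B(\gamma_0,R_0)=F_1$ and in particular $\Lambda'\cap K=\Lambda_0\cap K$.

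Total disconnectedness then follows as a corollary, since the finite intersections of sub-basis elements form a basis of clopen sets in a Hausdorff space (Hausdorffness is immediate from the form of the basic open sets (\ref{topology.internal})), so $\Xi^\Gamma$ is zero-dimensional. The main obstacle is clearly the second step: converting the patch-matching condition on a full compact set $K$, which \emph{a priori} involves a dense set of potential $\Gamma$-sites, into a finite Boolean combination of single-site membership conditions. The trick of invoking finite local complexity at a pattern point $\gamma_0\in\Lambda_0$ rather than at the origin --- which need not lie in $\Lambda_0$, and which in any case sees a dense set of spurious candidates in $K$ --- is what reduces the problem to a finite combinatorial one.
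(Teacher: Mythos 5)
Your proof is correct, and its overall skeleton --- identifying the single-site sets $\left\lbrace \Lambda \, \vert \, \gamma \in \Lambda \right\rbrace$ with the translates $\Xi .(-\gamma )$, deducing total disconnectedness from their being clopen, and local compactness from the fact that they cover $\Xi ^\Gamma$ --- is exactly the paper's. Where you genuinely diverge is on the sub-basis claim: the paper disposes of it in one line, observing only that a pattern $\Lambda \subseteq \Gamma$ is determined by the membership data of all $\gamma \in \Gamma$, which shows injectivity of the coding map into $\lbrace 0,1\rbrace ^\Gamma$ but does not by itself show that a basic neighborhood $\mathcal{U}_K(\Lambda _0)$ of the topology (\ref{topology.internal}) contains a \emph{finite} intersection of single-site conditions --- the obstruction being, as you say, that $\Gamma \cap K$ is typically infinite since $\Gamma$ is dense. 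Your reduction via finite local complexity --- recentering at a pattern point $\gamma _0\in \Lambda _0$ so that only finitely many admissible patches of radius $R_0$ can occur, then killing each wrong patch with one positive or one negative site condition --- is a sound and complete argument for this step, and it supplies precisely the detail the paper's proof takes for granted. The remaining points (Hausdorffness from the neighborhoods (\ref{topology.internal}), zero-dimensionality from the clopen basis, the covering argument for local compactness) are routine and correctly handled.
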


\begin{proof} Any point pattern $\Lambda \in \Xi ^\Gamma$ is uniquely determined by the knowledge of whether a point $\gamma \in \Gamma$ lies in $\Lambda $ or not, for each $\gamma \in \Gamma$. Thus a sub-basis for the topology of the internal system is given by the subsets
\begin{align*} \Xi _\gamma:= \left\lbrace \Lambda \in \Xi ^\Gamma \, \vert \; \gamma\in \Lambda  \right\rbrace 
\end{align*}
or their complementary sets $\Xi _\gamma ^c$ for $\gamma \in \Gamma$. Since they are both open they are thus both closed as well, giving that the internal system is totally disconnected. Now any $\Xi _\gamma$ or $\Xi _\gamma ^c$ is nothing but the $-\gamma$ translate of $\Xi$ or the complementary set $\Xi ^c$, as $$\Lambda \in \Xi .(-\gamma ) \Longleftrightarrow \Lambda .\gamma \in \Xi \Longleftrightarrow 0\in (\Lambda -\gamma) \Longleftrightarrow \gamma \in \Lambda \Longleftrightarrow \Lambda \in \Xi _\gamma$$
As any point pattern $\Lambda \in \Xi ^\Gamma$ must contain at least one element $\gamma \in \Gamma$, one gets that the compact open subsets $\Xi _\gamma$ form a covering of $\Xi^\Gamma$, giving the local compactness.
\end{proof}

\begin{prop}\label{prop:parametrization.map.interne}\cite{Sch} Let $\Xi ^\Gamma$ be the internal system associated with a cut $\&$ project scheme $(\mathsf{H},\Sigma,\mathbb{R}^d)$ and some window. Then there exists a factor map
\begin{align*}
\Pi : \Xi ^\Gamma \longrightarrow  \mathsf{H} 
\end{align*}

mapping a point pattern $\Lambda$ onto the unique element $\Pi (\Lambda )= w_\Lambda $ of $\mathsf{H}$ satisfying
\begin{align*}
\mathfrak{P} (w_\Lambda +\mathring{W}) \subseteq \Lambda \subseteq \mathfrak{P} (w_\Lambda +W).
\end{align*}
Moreover, the map $\Pi$ satisfies $\Pi(\Xi)= -W$, and is injective precisely on the dense family of nonsingular model sets of $\Xi ^\Gamma$, whose image is the dense subset $NS$ of $\mathsf{H}$.
\end{prop}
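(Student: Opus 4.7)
The plan is to extract well-definedness, equivariance and surjectivity of $\Pi$ directly from Theorem \ref{theo:parametrization.map}, then to prove continuity by exploiting the finer topology (\ref{topology.internal}) on $\Xi^\Gamma$, and finally to identify $\Pi(\Xi)$ and the injectivity locus.

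For well-definedness, given $\Lambda \in \Xi^\Gamma$ Theorem \ref{theo:parametrization.map} supplies a pair $(w_\Lambda, t_\Lambda) \in \mathsf{H} \times \mathbb{R}^d$ unique modulo $\Sigma$, which admits the lift $(w_\Lambda, 0)$ thanks to $\Lambda \subseteq \Gamma$. Uniqueness of $w_\Lambda$ in $\mathsf{H}$ (and not merely modulo $\Sigma$) is forced by the fact that if $(w, 0)$ and $(w', 0)$ both realise the inclusions, then $(w - w', 0)$ sits inside the graph $\Sigma$ of the $\ast$-map, which compels $w = w'$. Equivariance $\Pi(\Lambda - \gamma) = \Pi(\Lambda) - \gamma^*$ is then a one-line computation based on the identity $\mathfrak{P}(w + W) - \gamma = \mathfrak{P}(w - \gamma^* + W)$. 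For surjectivity I lift any $w \in \mathsf{H}$ to the fiber of the parametrization map above $[w, 0]_{\Sigma}$; rewriting Theorem \ref{theo:parametrization.map} for the lift $(w, 0)$ shows this fiber is entirely contained in $\Xi^\Gamma$ with parameter $w$.

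The main obstacle will be continuity, since the topology (\ref{topology.internal}) on $\Xi^\Gamma$ is strictly finer than the one induced from $\mathbb{X}$, so $\Pi$ cannot be obtained by factoring $\pi$ through the (possibly non-homeomorphic) embedding $\mathsf{H} \hookrightarrow \mathsf{H} \times_{\Sigma} \mathbb{R}^d$. I would attack it directly: setting $w = \Pi(\Lambda)$, for each radius $R > 0$ introduce the closed subset
\begin{align*}
L_R := \bigcap_{\gamma \in \Lambda \cap B(0,R)} (\gamma^* - W) \;\cap\; \bigcap_{\gamma \in (\Gamma \setminus \Lambda) \cap B(0,R)} \bigl(\mathsf{H} \setminus (\gamma^* - \mathring{W})\bigr)
\end{align*}
of $\mathsf{H}$. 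The uniqueness argument from the well-definedness step applied to the full intersection forces $\bigcap_R L_R = \{w\}$, and fixing any $\gamma_0 \in \Lambda$ confines $L_R$ to the compact translate $\gamma_0^* - W$ for $R \geq |\gamma_0|$. A routine compactness argument then ensures that $L_R$ eventually enters any prescribed neighborhood of $w$; since convergence in $\Xi^\Gamma$ means $\Lambda_\alpha \cap B(0,R) = \Lambda \cap B(0,R)$ eventually, the image $\Pi(\Lambda_\alpha)$ is forced into $L_R$ and therefore into any such neighborhood of $w$.

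For the remaining claims, the inclusion $\Pi(\Xi) \subseteq -W$ is immediate from $0 \in \Lambda \subseteq \mathfrak{P}(w + W)$. The reverse containment is direct when $-w \in \mathring{W}$, since every $\Pi$-preimage of such a $w$ automatically contains $0 \in \mathfrak{P}(w + \mathring{W})$; boundary points $-w \in \partial W$ are handled by approximating $w$ with non-singular $w_n \in -\mathring{W}$ and extracting a convergent subnet from the corresponding canonical patterns $\mathfrak{P}(w_n + W) \in \Xi$, using that $\Xi$ is compact and clopen in $\Xi^\Gamma$. Injectivity of $\Pi$ on non-singular model sets follows from the collapse of the two-sided inclusion to an equality when $w \in NS$, and the density of the non-singular locus in $\mathsf{H}$ (a Baire category argument using countability of $\Gamma^*$ and topological regularity of $W$) transfers to the corresponding density in $\Xi^\Gamma$ via the continuity just proved.
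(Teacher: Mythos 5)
The paper offers no proof of this proposition --- it is imported from \cite{Sch} --- so your argument has to stand on its own, and most of it does. The reduction of well-definedness and surjectivity to Theorem \ref{theo:parametrization.map} via the lift $(w,0)$, together with the observation that the only element of $\Sigma$ with vanishing $\mathbb{R}^d$-component is $0$, is exactly the right mechanism, and your continuity argument is correct and is the real content here: the sets $L_R$ are closed, decreasing in $R$, eventually trapped in the compact translate $\gamma_0^*-W$, and intersect in $\{w\}$ by the uniqueness already established, so they eventually enter any neighborhood of $w$; since agreement of $\Lambda'$ with $\Lambda$ on $B(0,R)$ forces $\Pi(\Lambda')\in L_R$, continuity follows. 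The identification $\Pi(\Xi)=-W$ and the injectivity of $\Pi$ over $NS$ are also fine; for the word \emph{precisely} in the injectivity claim you should add one line saying that over a singular $w$ the fiber of $\Pi$ coincides with the fiber of $\pi$ over $[w,0]_\Sigma$, which contains more than one pattern by the last sentence of Theorem \ref{theo:parametrization.map}.

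The one genuine gap is your final sentence. Density of $NS$ in $\mathsf{H}$ does not ``transfer'' to density of the non-singular model sets in $\Xi^\Gamma$ ``via the continuity just proved'': the preimage of a dense set under a continuous surjection need not be dense (a constant map is the degenerate counterexample), and nothing you have proved shows that $\Pi$ is open --- indeed it is not, since the image of a small neighborhood of a singular $\Lambda$ is essentially a translated cone intersected with $NS$ rather than a full ball around $w_\Lambda$, which is precisely the phenomenon Proposition \ref{prop:local.topology.internal.system} later quantifies. A correct argument is short but different: the set of non-singular model sets in $\Xi^\Gamma$ is non-empty (because $NS\neq\emptyset$ and $\Pi$ is onto) and invariant under the $\Gamma$-action, and the system $(\Xi^\Gamma,\Gamma)$ is minimal, so this invariant set is dense. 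Alternatively one can perturb $w_\Lambda$ to a nearby non-singular $w'$ chosen in a suitable direction so that $\mathfrak{P}(w'+W)$ agrees with $\Lambda$ on a prescribed compact set. Without one of these the claimed density in $\Xi^\Gamma$ --- which the rest of the paper uses repeatedly --- is unproved.
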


From the above proposition we thus have a correspondence between any $w\in NS$ with a unique nonsingular model set $\mathfrak{P}(w+ W)\in \Xi ^\Gamma$, and we may also write $NS$ for the dense subclass of nonsingular model sets of $\Xi ^\Gamma$. Thus the internal system $\Xi ^\Gamma$ and the internal space $\mathsf{H}$ as different completions of a single set $NS$. This observation allows us to set, for any subset $A$ of $\mathsf{H}$, a corresponding subset of $\Xi ^\Gamma $ of the form
\begin{align}\label{coupe} [A]_{\Xi}:= \overline{A\cap NS}^{\Xi ^\Gamma}
\end{align}
Such a $[A]_{\Xi}$ will be non-empty if and only if $A$ intersects $NS$. In particular $[A]_{\Xi}$ will have non-empty interior (and hence will be non-empty) whenever $A$ has non-empty interior. We will have use of the following lemma, which we state without proof:

\begin{lem}\label{lem:clopen} Let $X$ be a topological space, and $Y$ a dense subset. Then each clopen subset $V$ of $X$ is equal to the closure of $V\cap Y$. Moreover, if two clopen subsets coincide on $Y$, then they are equal.
\end{lem}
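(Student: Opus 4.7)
The plan is to prove the two claims in sequence, with the second being an immediate consequence of the first.

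For the first claim, I would split the equality $V = \overline{V \cap Y}$ into two inclusions. The inclusion $\overline{V \cap Y} \subseteq V$ is the easy direction: since $V \cap Y \subseteq V$ and $V$ is closed (being clopen), we have $\overline{V \cap Y} \subseteq \overline{V} = V$. The reverse inclusion $V \subseteq \overline{V \cap Y}$ is where openness of $V$ combined with density of $Y$ enters. Given $x \in V$, I would take an arbitrary open neighborhood $U$ of $x$; then $U \cap V$ is again an open neighborhood of $x$ (since $V$ is open), and by density of $Y$ in $X$ it must meet $Y$, giving $U \cap (V \cap Y) \neq \emptyset$. Since $U$ was arbitrary, $x$ belongs to the closure of $V \cap Y$.

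For the second claim, suppose $V_1$ and $V_2$ are clopen with $V_1 \cap Y = V_2 \cap Y$. Applying the first claim to each gives
\begin{equation*}
V_1 \;=\; \overline{V_1 \cap Y} \;=\; \overline{V_2 \cap Y} \;=\; V_2,
\end{equation*}
which concludes the proof.

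There is no real obstacle here; the only subtlety is to notice that both the open and the closed part of the clopen hypothesis on $V$ are used, the open part to form the neighborhood $U \cap V$ used against density of $Y$, and the closed part to absorb the closure back into $V$.
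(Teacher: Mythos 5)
Your proof is correct and complete; the paper states this lemma without proof, so there is no argument to compare against, but yours is exactly the standard one that is intended (closedness of $V$ gives $\overline{V\cap Y}\subseteq V$, openness of $V$ plus density of $Y$ gives the reverse inclusion, and the second claim follows by applying the first to both sets). No gaps.
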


For instance, one is able to show $\Xi = [-W]_\Xi = [-\mathring{W}]_\Xi$, underlying a link between the topology of the internal system and the geometry of $W$ in the internal space.

\begin{prop}\label{prop:internal.morphism.ellis} There exists an onto and proper continuous morphism
\begin{align*}
\Pi ^* :  E(\Xi ^\Gamma,\Gamma)\longrightarrow  \mathsf{H} 
\end{align*}
that satisfies the equivariance relation $\Pi(\Lambda .g)= \Pi(\Lambda)+\Pi^*(g)$ for any model set $\Lambda \in \Xi ^\Gamma$ and any mapping $g\in E(\Xi ^\Gamma,\Gamma)$. 
\end{prop}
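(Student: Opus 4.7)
The strategy is to apply the functorial construction of Proposition~\ref{prop:map.ellis.loc.compact} to the parametrization map $\Pi: \Xi^\Gamma \twoheadrightarrow \mathsf{H}$ of Proposition~\ref{prop:parametrization.map.interne}, and then identify $E(\mathsf{H},\Gamma)$ with $\mathsf{H}$ via Proposition~\ref{prop:equi.loc.compact}. First, I equip $\mathsf{H}$ with the $\Gamma$-action $w.\gamma := w - \gamma^\ast$: reindexing $\gamma' = \tilde\gamma - \gamma$ inside the definition of $\mathfrak{P}$ yields $\mathfrak{P}(w+W) - \gamma = \mathfrak{P}((w-\gamma^\ast)+W)$ and analogously for the interior, so the inclusions defining $\Pi(\Lambda.\gamma)$ give $\Pi(\Lambda.\gamma) = \Pi(\Lambda) - \gamma^\ast$. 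Hence $\Pi$ is $\Gamma$-equivariant, and since the image of $\Gamma$ inside $\mathsf{H}$ is the dense subgroup $-\Gamma^\ast = \Gamma^\ast$, Proposition~\ref{prop:equi.loc.compact} identifies $E(\mathsf{H},\Gamma)$ topologically with $\mathsf{H}$.

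The main obstacle is verifying that $\Pi$ is proper, for which I use both the topological regularity of $W$ and the finite local complexity built into $\Xi^\Gamma$. Fix a compact $K \subseteq \mathsf{H}$. Since $\mathring{W}$ is non-empty and $\Gamma^\ast$ is dense in $\mathsf{H}$, the open sets $\gamma^\ast - \mathring{W}$ for $\gamma \in \Gamma$ cover $\mathsf{H}$; compactness extracts a finite subcover of $K$ indexed by $\gamma_1,\ldots,\gamma_n$. For any $\Lambda \in \Pi^{-1}(K)$ with $w = \Pi(\Lambda)$, some $w \in \gamma_i^\ast - \mathring{W}$ rewrites as $\gamma_i^\ast \in w + \mathring{W}$, hence $\gamma_i \in \mathfrak{P}(w+\mathring{W}) \subseteq \Lambda$, which means $\Lambda \in \Xi.(-\gamma_i)$. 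Therefore
\begin{align*}
\Pi^{-1}(K) \; \subseteq \; \bigcup_{i=1}^n \Xi.(-\gamma_i),
\end{align*}
a finite union of $\Gamma$-translates of the compact open transversal $\Xi$; being closed inside this compact union, $\Pi^{-1}(K)$ is itself compact.

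With $\Pi$ continuous, onto, $\Gamma$-equivariant, and proper, Proposition~\ref{prop:map.ellis.loc.compact} delivers a continuous, proper, onto semigroup morphism $E(\Xi^\Gamma,\Gamma) \twoheadrightarrow E(\mathsf{H},\Gamma)$ intertwining the actions through $\Pi$. Composing with the topological isomorphism $E(\mathsf{H},\Gamma) \simeq \mathsf{H}$ from Proposition~\ref{prop:equi.loc.compact} produces the sought morphism $\Pi^\ast: E(\Xi^\Gamma,\Gamma) \twoheadrightarrow \mathsf{H}$, under which each $\gamma \in \Gamma$ corresponds to $-\gamma^\ast \in \mathsf{H}$. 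The abstract equivariance $\Pi(\Lambda.g)=\Pi(\Lambda).\Pi^\ast(g)$ granted by Proposition~\ref{prop:map.ellis.loc.compact} then reads additively as $\Pi(\Lambda.g) = \Pi(\Lambda) + \Pi^\ast(g)$ in $\mathsf{H}$, which is the required relation.
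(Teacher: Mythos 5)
Your proof is correct and follows essentially the same route as the paper: establish properness of $\Pi$ by covering $K$ with finitely many sets $\gamma_i^*-\mathring{W}$ to trap $\Pi^{-1}(K)$ inside a finite union of compact translates of $\Xi$, then apply Propositions~\ref{prop:map.ellis.loc.compact} and~\ref{prop:equi.loc.compact}. You additionally spell out the $\Gamma$-equivariance of $\Pi$ (which the paper leaves implicit), and your sign bookkeeping $\Pi(\Lambda.\gamma)=\Pi(\Lambda)-\gamma^*$ is internally consistent and suffices for the statement as written, even though later parts of the paper use the opposite sign convention for the image of $\Gamma$ in $\mathsf{H}$.
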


\begin{proof} Let us show that the the map $\Pi $ of Proposition \ref{prop:parametrization.map.interne} is proper, that is, the inverse image of any compact set of $\mathsf{H}$ is compact in $\Xi ^\Gamma$: Let $K$ be a compact subset of $\mathsf{H}$ and pick a model set $\Lambda$ in $\Pi ^{-1}(K)$. Since $\Gamma ^*$ is dense in $\mathsf{H}$ there exist $\gamma _1, ..., \gamma _l$ in $\Gamma$ such that $$K\subset \bigcup _{k=1 }^l \gamma ^* _k-\mathring{W}.$$ Thus $w_\Lambda \in K$ falls into some $\gamma ^* _k-\mathring{W}$, which implies that $\gamma _k$ lies in $ \mathfrak{P}(w_\Lambda +\mathring{W}) \subset \Lambda$. This in turns means that $0\in \Lambda -\gamma _k$, so $\Lambda -\gamma _k \in \Xi$ and thus $\Lambda \in \Xi .(-\gamma _k)$. Hence the closed set $\Pi ^{-1}(K)$ is entirely included in a finite union of translates of $\Xi$, each being compact, and so is a compact set of $\Xi ^\Gamma $. Now Proposition \ref{prop:map.ellis.loc.compact} applies, and after invoking Proposition \ref{prop:equi.loc.compact} it gives the desired morphism $\Pi ^*$.
\end{proof}

\subsection{Hull and internal system Ellis semigroups}

We want to relate the Ellis semigroup of the dynamical systems $(\mathbb{X},\mathbb{R}^d)$ with that of $(\Xi ^\Gamma, \Gamma )$. To this end, let $g$ be any mapping in the Ellis semigroup $E(\Xi ^\Gamma , \Gamma )$. Using Theorem \ref{theo:parametrization.map} together with the definition of the internal system, one sees that any point pattern $\Lambda $ in $\mathbb{X}$ can be written as $\Lambda _0= \Lambda -t$ for some model set $\Lambda \in \Xi ^\Gamma $ and some vector $t\in \mathbb{R}^d$. The mapping $g$ is well defined on each $\Lambda \in \Xi ^\Gamma$, and we may thus extend it into a self-map $\widetilde{g}$ of $\mathbb{X}$ by setting
\begin{align}\label{extension}\Lambda _0 .\widetilde{g} =(\Lambda -t).\widetilde{g} := \Lambda .g -t.
\end{align}
This is well defined since if one has $\Lambda -t=\Lambda '-t'$ with $\Lambda$ and $\Lambda '\in \Xi ^\Gamma$ then necessarily $\Gamma -t= \Gamma -t'$, which means that $t-t'\in \Gamma$, and since $g$ commutes with the $\Gamma$--action on $\Xi ^\Gamma$ then applying (\ref{extension}) gives the same result. Let us now consider the semigroup $E(\Xi ^\Gamma , \Gamma )\times _\Gamma \mathbb{R}^d $ to be the (topological) quotient of the direct product semigroup $E(\Xi ^\Gamma , \Gamma )\times  \mathbb{R}^d$ by the normal subsemigroup formed of elements $(\gamma , \gamma)$ with $\gamma \in \Gamma$.

\begin{theo}\label{theo:suspension.ellis} Let $\mathbb{X}$ and $\Xi ^\Gamma$ be the hull and internal system generated by a cut $\&$ project scheme $(\mathsf{H} ,\Sigma,\mathbb{R}^d)$ and some window. Then there is a homeomorphism and semigroup isomorphism
\begin{align*}  E(\Xi ^\Gamma , \Gamma )\times _\Gamma \mathbb{R}^d \simeq E(\mathbb{X}, \mathbb{R}^d)
\end{align*}
mapping each element $[g, t]_\Gamma $ of $E(\Xi ^\Gamma , \Gamma )\times _\Gamma \mathbb{R}^d $ to $\widetilde{g}-t$.
\end{theo}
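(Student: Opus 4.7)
The plan is to build the map $\Phi(g,t):=\tilde g-t$ explicitly, check the routine algebraic and topological properties by direct computation, and then reduce surjectivity to a compactness argument that exploits the lattice structure of $\Sigma$ in $\mathsf{H}\times\mathbb{R}^d$ combined with the density of $\mathbb{R}^d$-translations inside the Ellis semigroup.

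\emph{Construction and routine properties.} The author has already verified that $\tilde g$ is a well-defined self-map of $\mathbb{X}$. To see that $\tilde g-t$ belongs to $E(\mathbb{X},\mathbb{R}^d)$ and that $\Phi$ is continuous, pick a net $(\gamma_\lambda)\subset\Gamma$ with $\gamma_\lambda\to g$ in $E(\Xi^\Gamma,\Gamma)$; for $\Lambda_0=\Lambda-s$ with $\Lambda\in\Xi^\Gamma$ one has $\Lambda_0\cdot(\gamma_\lambda+t)=(\Lambda\cdot\gamma_\lambda)-(s+t)$, and since the fine topology (\ref{topology.internal}) of $\Xi^\Gamma$ refines the coarser inherited topology (\ref{topology}) and the $\mathbb{R}^d$-action is jointly continuous, this converges to $(\Lambda\cdot g)-(s+t)=\tilde g(\Lambda_0)-t$ in $\mathbb{X}$. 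The very same calculation handles continuity of $\Phi$. The semigroup law $(\tilde g_1-t_1)(\tilde g_2-t_2)=\widetilde{g_1g_2}-(t_1+t_2)$ follows from the identity $\widetilde{g_1g_2}=\tilde g_1\tilde g_2$ together with the commutation of each $\tilde g_i$ with $\mathbb{R}^d$-translations; descent to the quotient reduces to the identity $\widetilde{g\gamma}=\tilde g\cdot\gamma$ in $E(\mathbb{X},\mathbb{R}^d)$ for $\gamma\in\Gamma$. Injectivity on the quotient is then immediate from $\Lambda\cdot g,\Lambda\cdot g'\subset\Gamma$: if $\tilde g-t=\tilde g'-t'$, evaluating at $\Lambda\in\Xi^\Gamma$ yields $(\Lambda\cdot g)-(\Lambda\cdot g')=t-t'$, which forces $t-t'\in\Gamma$, and then $g$ and $g'$ differ exactly by the $\Gamma$-translation reconciling the two representatives, so $[g,t]_\Gamma=[g',t']_\Gamma$.

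\emph{Compactness of the quotient.} This is the crux. Given $(g,t)$, I would apply the cocompactness of $\Sigma\subset\mathsf{H}\times\mathbb{R}^d$: fix once and for all a compact fundamental domain $D\subset D_\mathsf{H}\times D_{\mathbb{R}^d}$. The diagonal $\Gamma$-action defining the quotient projects under $(\Pi^*,\mathrm{id})$ onto the $\Sigma$-action $(w,s)\mapsto(w+\gamma^*,s+\gamma)$ on $\mathsf{H}\times\mathbb{R}^d$, using $\Pi^*(g\gamma)=\Pi^*(g)+\gamma^*$, so one can choose $\gamma\in\Gamma$ with $(\Pi^*(g\gamma),t+\gamma)\in D$. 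Properness of $\Pi^*$ from proposition \ref{prop:internal.morphism.ellis} makes $(\Pi^*)^{-1}(D_\mathsf{H})$ compact in $E(\Xi^\Gamma,\Gamma)$, so every class of the quotient admits a representative inside the compact set $(\Pi^*)^{-1}(D_\mathsf{H})\times D_{\mathbb{R}^d}$, whose continuous image in the quotient is the whole quotient. Hence $E(\Xi^\Gamma,\Gamma)\times_\Gamma\mathbb{R}^d$ is compact.

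\emph{Surjectivity, conclusion, and main obstacle.} The image of $\Phi$ contains every $\mathbb{R}^d$-translation via $\Phi(\mathrm{id},t)$, where $\mathrm{id}$ denotes the neutral element of $E(\Xi^\Gamma,\Gamma)$; by definition of the Ellis semigroup those translations are dense in $E(\mathbb{X},\mathbb{R}^d)$. The image of $\Phi$ is compact, being the continuous image of the compact quotient, and hence closed in the Hausdorff space $E(\mathbb{X},\mathbb{R}^d)$, so density forces it to equal the whole target. The resulting continuous bijective semigroup morphism from a compact space onto a Hausdorff one is automatically a homeomorphism. The single delicate step is the compactness of the quotient: it is precisely the cocompactness of $\Sigma$ in $\mathsf{H}\times\mathbb{R}^d$ — not merely the separate densities of $\Gamma$ in $\mathbb{R}^d$ and of $\Gamma^*$ in $\mathsf{H}$ — that permits a simultaneous bounded reduction of both coordinates by a single $\Gamma$-shift, and once this is in hand surjectivity follows by an essentially formal density-plus-closure argument.
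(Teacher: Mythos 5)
Your proposal is correct and follows essentially the same route as the paper: the well-definedness, continuity, morphism and injectivity checks are identical, compactness of the quotient rests on the same two inputs (properness of $\Pi^*$ and cocompactness of $\Sigma$ — you phrase it with a fundamental domain where the paper instead pushes the quotient properly onto the compact torus $\mathsf{H}\times_\Sigma\mathbb{R}^d$, which amounts to the same thing), and surjectivity is the same closed-image-plus-density argument. The only cosmetic difference is that you obtain density of the $\mathbb{R}^d$-translations in the image by exhibiting an explicit convergent net $\gamma_\lambda+t\to\tilde g-t$, whereas the paper runs the equivalent verification through a sub-basis of neighborhoods $V(\Lambda_j,U_j)$ pulled back to the internal system.
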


\begin{proof} First we show that the quotient semigroup $E(\Xi ^\Gamma , \Gamma )\times _\Gamma \mathbb{R}^d $ is compact (Hausdorff): From the existence of the morphism $\Pi ^*$ one then gets a natural onto semigroup morphism $\Pi ^*\times id: E(\Xi ^\Gamma, \Gamma)\times \mathbb{R}^d \rightarrow \mathsf{H} \times \mathbb{R}^d$, which maps the normal subsemigroup formed by elements $(\gamma , \gamma)$ with $\gamma \in \Gamma$ onto the lattice $\Sigma$. Since $\mathsf{H} \times _{\Sigma}\mathbb{R}^d$ is compact (Hausdorff), and since $\Pi ^*\times id$ is continuous and proper, we deduce that $E(\Xi ^\Gamma , \Gamma )\times _{\Gamma}\mathbb{R}^d$ must itself be compact (Hausdorff).
\vspace{0.2cm}

Now it is clear that the mapping associating $g\in E(\Xi ^\Gamma , \Gamma )$ with $\tilde{g}\in \mathbb{X}^\mathbb{X}$ is a semigroup morphism for the composition laws of mappings. This association is moreover continuous: For this it suffices to check the continuity of each evaluation map $g\mapsto \Lambda _0. \tilde{g}$, with $\Lambda _0\in \mathbb{X}$. Write $\Lambda _0$ as $\Lambda  - t$ for some $\Lambda \in \Xi ^\Gamma$. If $g_\lambda$ is a net in $E(\Xi ^\Gamma , \Gamma )$ pointwise converging in $\Xi ^\Gamma$ to a mapping $g$, then one has convergence of the net $\Lambda .g_\lambda$ to $\Lambda  .g$ in the internal system $\Xi ^\Gamma$. Comparing the topologies coming from the topologies (\ref{topology}) on $\mathbb{X}$ and (\ref{topology.internal}) on $\Xi ^\Gamma$, one sees that the embedding of $\Xi ^\Gamma$ into the hull $\mathbb{X}$ is continuous, so that the net $\Lambda .g_\lambda$ also converges to $\Lambda  .g$ in the hull $\mathbb{X}$. Hence the net $\Lambda _0. \tilde{g}_\lambda = \Lambda  . g_\lambda -t$ converges to $\Lambda .g -t= \Lambda _0. \tilde{g}$, as desired.
\vspace{0.2cm}

From this we can define a continuous semigroup morphism from $E(\Xi ^\Gamma , \Gamma )\times  \mathbb{R}^d $ into $\mathbb{X}^\mathbb{X}$ that associates a pair $(g, t)$ with $\tilde{g}-t$. Clearly any pair of the form $(\gamma , \gamma)$ with $\gamma \in \Gamma$ is mapped to the identity map, thus giving a continuous semigroup morphism
\begin{align*}
E(\Xi ^\Gamma , \Gamma )\times _\Gamma \mathbb{R}^d \longrightarrow \mathbb{X}^\mathbb{X}, \quad  \left[ g, t\right] _\Gamma \longmapsto \tilde{g}-t .
\end{align*}
This map is one-to-one: If $\left[ g, t\right] _\Gamma$ and $\left[ g', t'\right] _\Gamma$ are such that $\tilde{g}-t \equiv \tilde{g}'-t' $, then they must in particular coincide at any model set $\Lambda \in \Xi ^\Gamma$, thus giving for each such point pattern $\Lambda .g -t = \Lambda .g' -t'$. As $\Lambda .g$ and $\Lambda .g'$ are supported on $\Gamma $ we deduce that $t'-t=: \gamma\in \Gamma$, and that $g'$ coincides with $g+\gamma$ everywhere on $\Xi ^\Gamma$. It follows that $\left[ g', t'\right] _\Gamma = \left[ g + \gamma, t + \gamma\right] _\Gamma = \left[ g, t\right] _\Gamma$, giving injectivity. Now the stated morphism conjugates, both topologically and algebraically, the semigroup $E(\Xi ^\Gamma , \Gamma )\times  \mathbb{R}^d $ with its image in $\mathbb{X}^\mathbb{X}$. To conclude it suffices then to show that this image densely contains the group of homeomorphisms coming from the $\mathbb{R}^d$--action on $\mathbb{X}$. Obviously this group is contained into the image in question, appearing as $[0 ,t]_\Gamma $, where $0$ stands for the identity mapping on $\Xi ^\Gamma$, lying in $\Gamma$ and thus in $E(\Xi ^\Gamma , \Gamma )$. Let $\tilde{g}-t$ be some mapping in this image. A neighborhood basis for this latter in $\mathbb{X}^\mathbb{X}$ may be given as finite intersections of sets
\begin{align*} V_{\mathbb{X}}(\Lambda , U):= \lbrace f\in \mathbb{X}^\mathbb{X} \, \vert \, \Lambda .f \in U\rbrace 
\end{align*} containing $\tilde{g}-t$. Let $\Lambda _1 , .., \Lambda _k$ be model sets and $U_1,..,U_k$ be open subsets of $\mathbb{X}$ such that $\tilde{g}-t$ lies in $V(\Lambda _j,U_j)$ for each $j$. To get density it then suffices to show the existence of some element of $\mathbb{R}^d$ also contained in $V(\Lambda _j,U_j)$ for each $j$. Let us write $\Lambda _j$ as a sum $\Lambda _j' - t_j$ with $\Lambda _j '\in \Xi ^\Gamma$. Hence the mapping $g$, being the restriction of $\tilde{g}$ on $\Xi ^\Gamma$, lies in each subset 
\begin{align}\label{open.sets} V_{\Xi ^\Gamma}(\Lambda _j', \Xi ^\Gamma \cap (U_j+t +t_j)):= \left\lbrace f\in (\Xi ^\Gamma )^{\Xi ^\Gamma} \, \vert \, \Lambda _j'.f \in U_j+t +t_j\right\rbrace 
\end{align}
The embedding of $\Xi ^\Gamma$ in the hull $\mathbb{X}$ is clearly continuous, so that $\Xi ^\Gamma \cap (U_j+t +t_j)$ are open sets of the internal system, so the sets (\ref{open.sets}) are open in $(\Xi ^\Gamma ) ^{\Xi ^\Gamma}$. As $E(\Xi ^\Gamma , \Gamma )$ is the closure of $\Gamma $ in $(\Xi ^\Gamma ) ^{\Xi ^\Gamma}$ one may thus find some $\gamma \in \Gamma$ within each set (\ref{open.sets}), giving that $\gamma -t\in \mathbb{R}^d$ lies in each $V(\Lambda _j,U_j)$, as desired. \end{proof}


Apart from this, the parametrization map $\pi$ of Theorem \ref{theo:parametrization.map} also implies the existence of an onto continuous semigroup morphism
\begin{align*}
\pi ^* : E(\mathbb{X},\mathbb{R}^d)\longrightarrow  \mathsf{H}\times _{\Sigma}\mathbb{R}^d 
\end{align*}
that satisfies the equivariance relation $\pi(\Lambda .\mathsf{g})= \pi(\Lambda)+\pi^*(\mathsf{g})$ for any model set $\Lambda \in \mathbb{X}$ and any Ellis transformation $\mathsf{g}\in E(\mathbb{X},\mathbb{R}^d)$. Then the morphism $\pi ^*$ extends the morphism $\Pi ^*$ in the sense that for any transformation $g$ in $E(\Xi ^\Gamma , \Gamma )$ and $t\in \mathbb{R}^d$, one has the equality
\begin{align*}\pi ^*(\tilde{g}-t)=[\Pi ^*(g),t]_{\Sigma}.
\end{align*}

\section{\large{\textbf{The almost canonical property for model sets}}}\label{almost canonical property}

We wish to define here the almost canonical property on a model set. To this end we restrict ourselves to real cut and project scheme $(\mathbb{R}^n, \Sigma , \mathbb{R}^d)$, and we ask the window $W$ to be a $n$--dimensional \textit{compact convex polytope} of the internal space $\mathbb{R}^n$. The definition of almost canonical model sets will be derived from a corresponding notion on $W$, which consists of a pair of assumptions we will now present.
\vspace{0.2cm}

In fact, it will be much more convenient to consider the \textit{reversed window} $M:= -W$ in the internal space. It as well consists of a $n$--dimensional compact convex polytope in $\mathbb{R}^n$, whose boundary is given by $\partial M = -\partial W$. If we now let $f$ be any $(n-1)$ dimensional face of $M$, we define:

\begin{itemize}

\item $A_f$ or $A_f^0$ to be the affine hyperplane generated by $f$.

\item $H_f$ or $H_f^0$ be the corresponding linear hyperplane in $\mathbb{R}^n$.

\item $Stab_{\Gamma}(A_f)$ to be the subgroup of $\gamma \in \Gamma$ with $\gamma ^*\in H_f$.
\end{itemize}

We remark that $Stab_{\Gamma}(A_f)$ is precisely the subgroup of elements $\gamma \in \Gamma$ such that $A_f+\gamma ^*=A_f$, whence the notation. We may also denote $Stab_{\Gamma}(A_f)^*$ for its image in the internal space under the $*$--map.

\begin{as}\label{as1} For each face $f$ of $M$, the sum $Stab_{\Gamma}(A_f)^*+f$ covers $A_f$ in $\mathbb{R}^n$.
\end{as}

The above assumption implies in particular that $Stab_{\Gamma}(A_f)$ has a relatively dense image in $H_f$ under the $*$--map, and thus must be of rank at least $n-1$. Under the above assumption we get a nice description of the subset of nonsingular vectors:
\begin{align*} NS:= \left[ \Gamma ^*-\partial W\right]  ^c = \left[ \Gamma ^*+\partial M\right]  ^c =  \left[ \bigcup _{ f \text{ face of } M} \Gamma^* +A_f\right] ^c .
\end{align*}
As we see above, the subset of nonsingular vectors arises as the complementary subsets of all the $\Gamma$--translates of \textit{singular hyperplanes}, namely the affine hyperplanes $A_f$ with $f$ a face of the reversed window $M$. Let us in addition define for each $(n-1)$--dimensional face $f$ of $M$:

\begin{itemize}

\item $H_f ^-$ and $H_f^+$ to be the open half-spaces with boundary $H_f$.

\item $H_f ^{-0}$ and $H_f^{+0}$ to be the closed half-spaces with boundary $H_f$.

\item $A_f ^-$, $A_f^+$, $A_f ^{-0}$ and $A_f^{+0}$ be the corresponding objects with respect to $A_f$.
\end{itemize}

The choice of orientation on each linear hyperplane provided by the above notation is not relevant, but will be remained fixed from now on. Observe that a hyperplane $H$ may be associated to two different faces, which in this case leaves a common orientation on the corresponding affine spaces.
\vspace{0.2cm}

Recall that to any Euclidean subset $A$ may be associated a corresponding subset $[A]_\Xi$ of the internal system according to (\ref{coupe}). We will be specially interested here in a certain collection of Euclidean subsets which we call the family of \textit{admissible half-spaces},
\begin{align*} \mathfrak{A}= \left\lbrace \gamma ^*+ A_f^\pm \, \vert \, \gamma \in \Gamma , \, f \text{ face of } M \right\rbrace 
\end{align*} 

\begin{as}\label{as2} Any set $[A]_{\Xi}$, where $ A\in \mathfrak{A}$, is a clopen set.
\end{as}

It can be shown that Assumption $2$ implies Assumption $1$, but as we don't really need to prove this fact here we assume both independently. We wish to illustrate what type of polytope could satisfies Assumptions $1$ and $2$ by showing situations where this holds, but first let us define what an \textit{almost canonical model set} is:

\begin{de}\label{def:almost.canonical.model.set} A model set is almost canonical when it may be constructed with a real cut and project scheme and a compact convex polytopic window in its internal space that satisfy Assumptions $1$ and $2$.
\end{de}

The term \textit{almost canonical} makes reference to the first point patterns defined as model sets, the \textit{canonical} model sets, constructed via a real cut and project scheme $(\mathbb{R}^n, \Sigma , \mathbb{R}^d)$ together with an orthogonal projection of a unit cube for a window, with respect to the lattice $\Sigma$, in the internal space. Our example in Section $1$ is of this form. The terminology \textit{almost canonical} has been introduced by Julien in \cite{Ju} to define slight generalizations of these model sets. However, our definition doesn't fit exactly with the one given in \cite{Ju} (it can be shown that ours implies the one of Julien), but as we don't want to introduce another definition for something so close to the situation of \cite{Ju}, we allow ourselves to abuse terminology and call ours almost canonical. As shown in \cite{FoHuKe}, a canonical window always satisfies Assumptions $1$ and $2$ and is thus almost canonical in our sense.

\vspace{0.2cm}
A condition that ensures that Assumptions $1$ and $2$ hold is the requirement that any stabilizer $Stab_\Gamma (A_f)$ is dense in the corresponding linear hyperplane $H_f$ for any face $f$ of the window $W$ (or its reversed window $M$, which remains the same). A lighter condition that also ensures Assumptions $1$ and $2$ is a slight strengthening of Assumption $1$:

\vspace{0.2cm}
\begin{flushleft}
\textbf{Assumption 1'.} \textit{For each face $f$ of $M$, the sum $Stab_{\Gamma}(A_f)^*+\mathring{f}$ covers $A_f$, where $\mathring{f}$ denotes the relative interior of $f$.}
\end{flushleft}

\section{\large{\textbf{Preparatory results on the Ellis semigroup of the internal system}}}\label{preliminary}

\subsection{Internal system topology}

\begin{prop}\label{prop:topology.internal.system} The collection of clopen sets $[A]_{\Xi}$, where $ A\in \mathfrak{A}$, forms a subbasis for the topology of the internal system. Moreover, for any pair $A,A'$ in $\mathfrak{A}$ the following Boolean rules are true:
$$[A\cup A']_{\Xi}=[A]_{\Xi}\cup [A']_{\Xi}, \quad [A]_{\Xi}^c=[A^c]_{\Xi}, \quad [A\cap A']_{\Xi}=[A]_{\Xi}\cap [A']_{\Xi}.$$
\end{prop}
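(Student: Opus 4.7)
The plan is to establish the Boolean rules first, since the sub-basis argument relies on them. The pivotal step is the identity
\[
[A]_\Xi \cap NS = A \cap NS, \qquad A \in \mathfrak{A},
\]
where $NS \subset \mathsf{H}$ is identified with $NS \subset \Xi^\Gamma$ via $\Pi$. I would prove the nontrivial inclusion by contradiction: if $\Lambda \in [A]_\Xi \cap NS$ had $w := \Pi(\Lambda) \notin A$, then $w$ would lie in the opposite open half-space (also an element of $\mathfrak{A}$), since $w \in NS$ avoids the bounding hyperplane of $A$. The preimage under $\Pi$ of this opposite half-space would then be an open neighborhood of $\Lambda$ disjoint from $A \cap NS$ (under the identification), contradicting $\Lambda \in \overline{A \cap NS}$.

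With this identity in hand, the union rule follows at once from the fact that closure commutes with finite unions. For the complement rule, I would observe that $A^c \cap NS$ coincides with the opposite open half-space intersected with $NS$ (as $NS$ misses the bounding hyperplane), whence $[A^c]_\Xi$ equals $[\text{opposite half-space}]_\Xi$ and is clopen by Assumption \ref{as2}. The key identity then gives both $[A]_\Xi^c$ and this clopen set the same intersection with $NS$, and Lemma \ref{lem:clopen} (a clopen subset equals the closure of its trace on $NS$) forces equality. The intersection rule follows the same template: the clopen set $[A]_\Xi \cap [A']_\Xi$ has trace $(A \cap A') \cap NS$ on $NS$, so it coincides with $\overline{(A \cap A') \cap NS} = [A \cap A']_\Xi$ by Lemma \ref{lem:clopen}.

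For the sub-basis claim, one direction is immediate from Assumption \ref{as2}, since each $[A]_\Xi$ is open. For the other, I would realize the standard sub-basis $\{\Xi_\gamma, \Xi_\gamma^c : \gamma \in \Gamma\}$ of Proposition \ref{prop:topology.internal.system.tot.disconnected} as finite Boolean combinations of sets in the new collection. Writing $\mathring{M} = \bigcap_f A_f^{\epsilon_f}$ as an intersection of open half-spaces indexed by the bounding hyperplanes of the reversed window, and combining $\Xi = [-\mathring{W}]_\Xi = [\mathring{M}]_\Xi$ with the iterated intersection rule, one obtains $\Xi = \bigcap_f [A_f^{\epsilon_f}]_\Xi$. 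The equivariance of $\Pi$ (translation by $\gamma \in \Gamma$ on $\Xi^\Gamma$ corresponds to translation by $-\gamma^*$ in $\mathsf{H}$) then yields $\Xi_\gamma = \Xi.(-\gamma) = \bigcap_f [A_f^{\epsilon_f} + \gamma^*]_\Xi$, and the complement rule delivers $\Xi_\gamma^c = \bigcup_f [A_f^{-\epsilon_f} + \gamma^*]_\Xi$.

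The main obstacle I anticipate is the key identity: it crucially uses both the continuity of $\Pi$ and the defining property that elements of $NS$ avoid every translated singular hyperplane. Once this is in place, the rest reduces to systematic applications of Lemma \ref{lem:clopen} and the equivariance of $\Pi$.
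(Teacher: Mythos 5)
Your proposal is correct, and it rests on the same two pillars as the paper's proof (density of $NS$ together with continuity of $\Pi$, and the clopen-ness guaranteed by Assumption \ref{as2} combined with Lemma \ref{lem:clopen}), but it is organized around a different workhorse. The paper proves the middle Boolean rule by establishing the partition $[A_f^+ + \gamma^*]_\Xi \sqcup [A_f^- + \gamma^*]_\Xi = \Xi^\Gamma$ head-on: the covering is obtained by writing each element of $\Xi^\Gamma$ as a limit of non-singular model sets and extracting a subsequence whose $\Pi$-images lie in a single open half space, while disjointness is obtained by noting that a non-empty clopen intersection would contain a non-singular model set whose $\Pi$-image would then lie in $\overline{A_f^+}\cap\overline{A_f^-}=A_f$, a contradiction. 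You instead isolate the trace identity $[A]_\Xi \cap NS = A\cap NS$ (your contradiction argument for it is essentially the paper's disjointness argument in disguise, replacing the two-sequence limit by the open preimage of the opposite half space) and then derive all three rules, including the covering, by uniform appeal to Lemma \ref{lem:clopen}; in particular you never need the subsequence-extraction step. This buys a cleaner, more systematic derivation in which the partition is a corollary rather than an input; the paper's version is more self-contained in that it leans less heavily on Lemma \ref{lem:clopen}, which it states without proof. The sub-basis argument is the same in both: express $\Xi=[\mathring M]_\Xi$ as a finite intersection of the $[A_f^{s_f}]_\Xi$, translate by $\Gamma$ using the equivariance $[A]_\Xi.\gamma=[A+\gamma^*]_\Xi$, and invoke Proposition \ref{prop:topology.internal.system.tot.disconnected}.
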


\begin{proof} Whenever $w$ is a nonsingular element of $NS\subset \mathbb{R}^n$, one has for any $\gamma \in \Gamma$ that
\begin{align*}\mathfrak{P}(W+w).\gamma :=  \mathfrak{P}(W+w) -\gamma = \mathfrak{P}(W+w+\gamma ^*)
\end{align*}
This is the key that lets us write, for any $\gamma \in \Gamma$, the equalities
\begin{align}\label{cut.up.half.space.translated} [A _f^+]_{\Xi }.\gamma = [A _f^+ +\gamma ^*]_\Xi 
\end{align}
This observation being made, let us start the proof by showing the Boolean equalities. The equality on the left is a simple consequence of closure operation. The equality in the middle is equivalent to having disjoint decompositions
\begin{align}\label{complémentaire} [A _f^+ +\gamma ^*]_\Xi \sqcup [A _f^- +\gamma ^*]_\Xi = \Xi ^\Gamma
\end{align}
which reduces, due to the equalities provided in (\ref{cut.up.half.space.translated}), to showing that $[A _f^+]_\Xi \sqcup [A _f^-]_\Xi = \Xi ^\Gamma $. To that end, note that any element of the internal system $\Xi ^\Gamma$ is the limit of a sequence of nonsingular elements, a sequence which can be taken after extraction into one of the two open half-spaces $A_f^+$ and $A_f ^-$. Therefore such element remains in either $[A _f^+]_\Xi $ or $ [A _f^-]_\Xi$, showing that their union covers the internal system. On the other hand, these subsets are by assumption clopen, so they must have a clopen intersection. Assume for a contradiction that this is not the empty set: It must contain a nonsingular model set $\Lambda $, whose image under $\Pi$ is a nonsingular element $w_\Lambda \in NS\subset \mathbb{R}^n$. However $\Lambda$ is the limit of two sequences of nonsingular model sets, with associated sequences of nonsingular elements in $\mathbb{R}^n$ taken in $A _f^+ $ for the first sequence and in $A _f^-$ for the second one. Taking limits one must have $w_\Lambda \in A_f$, and since $w_\Lambda$ has been taken nonsingular one has the desired contradiction.
\vspace{0.2cm}

Having proven the left and middle Boolean equalities, one can deduce third one by general Boolean algebra.
\vspace{0.2cm}

To show that the sets $[A]_{\Xi}$ where $ A\in \mathfrak{A}$ form a subbasis for the topology, observe that the set $\mathring{M}=-\mathring{W}$ is precisely the intersection of admissible half-spaces $A_f^{s_f}$, where $s_f$ is the sign $-$ or $+$ such that $A_f^{s_f}$ contains the interior of $M$. From this we deduce
\begin{align*} \Xi  = [-\mathring{W}]_\Xi = [\mathring{M}]_\Xi = \left[  \bigcap _{f \text{ face of } W} A_f^{s_f}\right] _\Xi =  \bigcap _{f \text{ face of } W}[A_f^{s_f}]_\Xi .
\end{align*}
Thus the set $\Xi$ and its complementary set can be obtained as finite intersections of sets of the statement. Since $\Xi ^\Gamma $ admits a subbasis formed by the $\Gamma$--translates of $\Xi$ and its complementary set by Proposition \ref{prop:topology.internal.system.tot.disconnected}, the proof is complete.
\end{proof}

\subsection{Cones associated with model sets}

We define the \textit{cut type} of a vector $w\in \mathbb{R}^n$ to be the family of linear hyperplanes for which some parallel singular hyperplane passes through $w$,
\begin{align}\label{type.coupe} \mathfrak{H}_w:= \left\lbrace H_f\in \mathfrak{H}_W \; \vert \; w\in \Gamma ^*+A_f\right\rbrace .
\end{align}

To each $w\in \mathbb{R}^n$ is associated a family of \textit{cones} (also called \textit{corners} in \cite{Le}), which are open cones with vertex $0$ and boundaries formed by hyperplanes in $\mathfrak{H}_w$. We may label each of these cones by a \textit{cone type} $\mathfrak{c} : \mathfrak{H}_W\longrightarrow \left\lbrace -,+,\infty \right\rbrace $, so that the labeled cone is obtained, according to the notations of Section $4$, as
\begin{align*} C:= \bigcap _{H\in \mathfrak{H}_W} H^{\mathfrak{c}(H)}.
\end{align*}
In the above intersection only hyperplanes where $\mathfrak{c}$ has value not equal to $\infty$ are consistent, and we may set the \textit{domain} of a cone type $\mathfrak{c}$ to be the subset $dom(\mathfrak{c})$ of $\mathfrak{H}_W$ where it has value different from $\infty$. Moreover, a cone determined by the cut type $\mathfrak{H}_w$ has only one cone type whose domain is precisely $\mathfrak{H}_w$. Now given a cone $C$ in $\mathbb{R}^n$ and some vector $w$ of $\mathbb{R}^n$, we define
\begin{align}\label{cone.borné} C(w,\varepsilon ):= (C\cap B(0,\varepsilon ))+w
\end{align}
to be the head of the cone $C$, translated at $w$ and of length $\varepsilon $. One can easily verify that a vector $w\in \mathbb{R}^n$ belongs to the nonsingular vectors $NS$ if and only if its cut type $\mathfrak{H}_w$ is empty. In this latter case the unique resulting cone $C$ is the full Euclidean space $\mathbb{R}^n$, for which any set of the form (\ref{cone.borné}) is a Euclidean ball.

\begin{prop}\label{prop:local.topology.internal.system} Given a model set $\Lambda \in \Xi ^\Gamma$, there exists a cone $C_\Lambda$, admitting a cone type $\mathfrak{c}_\Lambda$ with domain $\mathfrak{H}_{w_\Lambda}$, such that the following equivalence holds for each admissible half space $A\in \mathfrak{A}$:
\begin{align*} \Lambda \in [A]_\Xi \quad \Longleftrightarrow  \quad C_\Lambda (w_\Lambda	 ,\varepsilon )\subset A \text{ for some } \varepsilon >0.
\end{align*}
\end{prop}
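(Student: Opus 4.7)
The plan is to define the cone $C_\Lambda$ directly from the clopen structure of the internal system supplied by assumption \ref{as2}. For each linear hyperplane $H \in \mathfrak{H}_{w_\Lambda}$, I pick any $\gamma \in \Gamma$ and any face $f$ of $M$ with $H_f = H$ satisfying $w_\Lambda \in \gamma^* + A_f$; by the disjoint decomposition (\ref{complémentaire}), $\Lambda$ lies in exactly one of $[\gamma^* + A_f^+]_\Xi$ and $[\gamma^* + A_f^-]_\Xi$, and I set $\mathfrak{c}_\Lambda(H) \in \{-,+\}$ accordingly, while declaring $\mathfrak{c}_\Lambda(H) = \infty$ for $H \notin \mathfrak{H}_{w_\Lambda}$. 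The candidate cone is then $C_\Lambda := \bigcap_{H \in \mathfrak{H}_{w_\Lambda}} H^{\mathfrak{c}_\Lambda(H)}$. A first verification is that the choice is unambiguous: any two admissible half spaces $\gamma^* + A_f^+$ and $\gamma'^* + A_{f'}^+$ sharing $H_f = H_{f'} = H$ and having $w_\Lambda$ in both boundaries coincide with the single open half space $H^+ + w_\Lambda$, because the orientation convention is fixed per linear hyperplane and carries consistently to parallel affine hyperplanes.

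Next I would show that $C_\Lambda$ is a genuine (non-empty) open cone. The finite intersection $\bigcap_{H \in \mathfrak{H}_{w_\Lambda}} [H^{\mathfrak{c}_\Lambda(H)} + w_\Lambda]_\Xi$ is clopen by assumption \ref{as2} and contains $\Lambda$, so is an open neighborhood of $\Lambda$ in $\Xi^\Gamma$, which necessarily meets the dense family of non-singular model sets supplied by proposition \ref{prop:parametrization.map.interne}. Picking such a non-singular $\Lambda_0 = \mathfrak{P}(w+W)$ with $w \in NS$, the fact that $NS$ avoids every translated singular hyperplane forces $\Lambda_0 \in [H^{\mathfrak{c}_\Lambda(H)} + w_\Lambda]_\Xi$ to translate exactly to $w \in H^{\mathfrak{c}_\Lambda(H)} + w_\Lambda$ for each $H \in \mathfrak{H}_{w_\Lambda}$, so $w - w_\Lambda \in C_\Lambda$ and the cone is non-empty.

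For the stated equivalence, fix an admissible half space $A = \gamma^* + A_f^\pm$ and split into two cases according to whether $w_\Lambda \in \gamma^* + A_f$. If $w_\Lambda \notin \gamma^* + A_f$, then $w_\Lambda$ lies strictly in one of the two open half spaces bounded by this hyperplane; for $\varepsilon$ smaller than the distance from $w_\Lambda$ to $\gamma^* + A_f$, both sides of the equivalence reduce to the single condition $w_\Lambda \in A$, where for the membership side one approximates $\Lambda$ by non-singular model sets whose $\Pi$-images lie in $B(w_\Lambda, \varepsilon)$ and invokes clopenness of $[A]_\Xi$. If instead $w_\Lambda \in \gamma^* + A_f$, then $H_f \in \mathfrak{H}_{w_\Lambda}$ and a straightforward translation gives $A - w_\Lambda = H_f^\pm$, so for $\varepsilon$ small enough that no other translated singular hyperplane meets $B(w_\Lambda, \varepsilon)$ without passing through $w_\Lambda$, the condition $C_\Lambda(w_\Lambda, \varepsilon) \subset A$ reduces by homogeneity of the cone to $C_\Lambda \subset H_f^\pm$, which by the very construction of $\mathfrak{c}_\Lambda$ is equivalent to $\Lambda \in [A]_\Xi$. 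The main obstacle, beyond cleanly handling these cases, is the simultaneous control of $\varepsilon$ across the finite family of hyperplanes involved; this is not serious since $\mathfrak{H}_W$ is finite and only finitely many $\gamma \in \Gamma$ give singular hyperplanes meeting any fixed ball around $w_\Lambda$.
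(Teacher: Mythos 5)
Your construction of $\mathfrak{c}_\Lambda$ and $C_\Lambda$ is exactly the paper's, but your proof of the equivalence takes a genuinely different and more elementary route. The paper first proves a stronger intermediate fact: using properness of $\Pi$ and a general lemma on clopen neighborhoods meeting fibers in a single point (lemma \ref{lem:base.voisinages.ouverts}), it shows that the sets $[C_\Lambda + w_\Lambda]_\Xi \cap \Pi^{-1}(B(w_\Lambda,\varepsilon))$ form a neighborhood basis of $\Lambda$ in $\Xi^\Gamma$ --- a fact reused later, for instance in theorem \ref{theo:topologie.par.cones} and in the topological analysis of the Ellis semigroup --- and then derives the equivalence by intersecting with $NS$ and taking closure and interior in $\mathbb{R}^n$. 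You bypass that machinery entirely and argue directly on a fixed admissible half space $A=\gamma^*+A_f^{\pm}$, splitting on whether $w_\Lambda$ lies on $\gamma^*+A_f$: off that hyperplane both sides of the equivalence reduce to the single condition $w_\Lambda\in A$, and on it they reduce to the sign $\mathfrak{c}_\Lambda(H_f)$, since $A-w_\Lambda=H_f^{\pm}$ is itself a cone. This is correct and self-contained, at the cost of not producing the reusable neighborhood basis. One caveat: your two side remarks --- choosing $\varepsilon$ so that no other translated singular hyperplane meets $B(w_\Lambda,\varepsilon)$ without passing through $w_\Lambda$, and the claim that only finitely many $\gamma\in\Gamma$ give singular hyperplanes meeting a fixed ball --- are both false, because $\Gamma^*$ is dense in $\mathbb{R}^n$ and hence the translated singular hyperplanes are dense, so infinitely many of them cross every ball. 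Fortunately neither claim is load-bearing: in your second case the reduction of $C_\Lambda(w_\Lambda,\varepsilon)\subset A$ to $C_\Lambda\subset H_f^{\pm}$ needs only that $A-w_\Lambda$ is a cone, so these remarks should simply be deleted.
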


\begin{proof} Let $\Lambda \in \Xi ^\Gamma$. If $H$ is a hyperplane of the cut type $\mathfrak{H}_{w_\Lambda}$, that is, if one has some $\gamma \in \Gamma $ and some face $f$ with $w_\Lambda \in \gamma ^*+A_f$, $A_f$ parallel to $H$, then the hyperplane $H + w_\Lambda$ is equal to $ \gamma ^*+A_f$ and the half-spaces $H^{\pm}+w_\Lambda$ are admissible. Therefore $[H^{+}+w_\Lambda]_\Xi$ and $[H^{-}+w_\Lambda]_\Xi$ are clopen complementary sets, and the one containing $\Lambda$ defines the sign $\mathfrak{c}_\Lambda(H)$. This provides $\mathfrak{c}_\Lambda$ uniquely. From the Boolean rules stated in Proposition \ref{prop:topology.internal.system}, the model set $\Lambda$ is such that
\begin{align}\label{cone}\Lambda \in \bigcap _{H\in \mathfrak{H}_{w_\Lambda}} \left[ H^{\mathfrak{c}_\Lambda(H)}+w_\Lambda\right] _\Xi= \left[ \bigcap _{H\in \mathfrak{H}_{w_\Lambda}} H^{\mathfrak{c}_\Lambda(H)} + w_\Lambda\right] _\Xi= \left[ C_\Lambda+w_\Lambda\right] _\Xi
\end{align}
with $C_{_\Lambda}$ the unique cone with cone type $\mathfrak{c}_\Lambda$, in particular non-empty. We now show that a model set $\Lambda$ has a neighborhood basis in the internal system obtained as 
\begin{align}\label{nei}
 [C_\Lambda + w_\Lambda]_\Xi \cap \Pi ^{-1}(B(w_\Lambda, \varepsilon))
\end{align}
From the inclusion of $\Lambda$ stated in (\ref{cone}) it is clear that (\ref{nei}) is a family of open neighborhoods of $\Lambda$ in the internal system. We will use the following lemma:

\begin{lem}\label{lem:base.voisinages.ouverts} Let $\pi : X \rightarrow Y$ be a continuous and proper map between locally compact spaces. Let $X_x$ be the fiber of $x$ with respect to $\pi $ for each $x\in X$. If there is a clopen neighborhood $V_x$ of $x$ satisfying $V_x\cap X_x= \lbrace x\rbrace $, then a neighborhood basis of $x$ is provided by $V_x\cap \pi ^{-1}(U)$ with $U$ running among the neighborhoods of $\pi (x)$.
\end{lem}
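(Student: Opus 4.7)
The plan is as follows. First I would note that each proposed basis element $V_x \cap \pi^{-1}(U)$ is genuinely an open neighborhood of $x$, since $V_x$ is open by hypothesis and $\pi^{-1}(U)$ is open by continuity of $\pi$, and $x \in V_x \cap \pi^{-1}(U)$ because $\pi(x) \in U$. The substantive point is thus to show that, given any open neighborhood $W$ of $x$, one can produce an open neighborhood $U$ of $\pi(x)$ in $Y$ such that $V_x \cap \pi^{-1}(U) \subseteq W$.

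To that end I would choose the natural candidate $U := Y \setminus \pi(V_x \setminus W)$, motivated by rewriting the desired inclusion as the disjointness $U \cap \pi(V_x \setminus W) = \emptyset$. For $U$ to be an open neighborhood of $\pi(x)$, two properties of the set $\pi(V_x \setminus W)$ must be checked: that it is closed in $Y$, and that it avoids $\pi(x)$.

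The avoidance is immediate from the fiber hypothesis $V_x \cap X_x = \{x\}$: if some $y \in V_x \setminus W$ satisfied $\pi(y)=\pi(x)$, it would lie in $V_x \cap X_x$, hence equal $x$, contradicting $x \in W$. The closedness is the main technical input. Since $V_x$ is clopen and $W$ is open, $V_x \setminus W = V_x \cap W^c$ is closed in $X$. I would then invoke the classical fact that a continuous proper map between locally compact Hausdorff spaces is a closed map, which can be proved directly from the definition or extracted from the continuous extension $\hat{\pi}:\hat{X}\to\hat{Y}$ to the one-point compactifications already exploited in Section~$2$. Applying this to the closed set $V_x \setminus W$ yields closedness of its image, so $U$ is open, and the desired inclusion $V_x \cap \pi^{-1}(U)\subseteq W$ then follows formally from the defining disjointness. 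I do not anticipate any genuine obstacle; the argument rests entirely on this closed-map property of proper maps combined with the fiber condition built into the hypothesis.
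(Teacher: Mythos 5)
Your proof is correct, but it takes a genuinely different route from the paper. You argue directly: given an open neighborhood $W$ of $x$, you exhibit the explicit neighborhood $U := Y \setminus \pi(V_x \setminus W)$ of $\pi(x)$, using that $V_x \setminus W$ is closed (since $V_x$ is clopen), that continuous proper maps into locally compact Hausdorff spaces are closed maps, and that the fiber condition $V_x \cap X_x = \{x\}$ keeps $\pi(x)$ out of $\pi(V_x\setminus W)$. The paper instead argues by contradiction with a net: assuming some open $V \ni x$ is not absorbed, it picks points $x_U \in V_x \cap \pi^{-1}(U) \cap V^c$ indexed by the neighborhoods $U$ of $\pi(x)$ inside a fixed compact neighborhood $U_0$, uses properness to place this net in the compact set $V_x \cap \pi^{-1}(U_0)$, and extracts an accumulation point lying in $V_x \cap X_x \cap V^c$, contradicting $V_x \cap X_x = \{x\} \subseteq V$. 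Both arguments lean on properness in an essential and essentially equivalent way (your closed-map lemma is itself proved by the same local compactness-plus-compact-fibers argument the paper runs inline), so neither is deeper than the other; yours has the merit of being constructive and of isolating the standard closed-map fact, while the paper's is self-contained at the cost of a net extraction. One cosmetic remark: for a general neighborhood $U$ of $\pi(x)$ the set $V_x \cap \pi^{-1}(U)$ need not be open, only a neighborhood of $x$, but this does not affect your argument since you may always pass to the interior of $U$.
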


\begin{proof} Suppose for a contradiction that the stated family is not a neighborhood basis of $x$. One may then select an open neighborhood $V$ of $x$ such that $V_x\cap \pi ^{-1}(U)$ meets $V^c$ for each neighborhood $U$ of $\pi (x)$. Let $\Delta $ be the directed family of open neighborhoods of $\pi (x)$ falling in some compact neighborhood $U_0$ of $\pi (x)$. One may select a net $\lbrace x_U \rbrace_{ U\in \Delta }$ into $V^c$ and with each $x_U $ belonging to $ V_x\cap \pi ^{-1}(U)$. This net falls in the compact set $V_x\cap \pi ^{-1}(U_0)$ and in $V^c$ as well. Taking some accumulation point $x'$, necessarily lying in both $V_x$ and $ X_x$, and in the closed set $V^c$ as well, gives the contradiction as we supposed $V_x\cap X_x= \lbrace x\rbrace $ contained in $V$.
\end{proof}
\vspace{0.2cm}
We then show that a clopen neighborhood of $\Lambda$ which fits the condition of the above lemma is provided by $[C_\Lambda + w_\Lambda]_\Xi$: For this, suppose that $\Lambda$ and $\Lambda '$ are such that $w_\Lambda = w_{\Lambda '}=:w$ in $\mathbb{R}^n$. From Proposition \ref{prop:topology.internal.system} there is a face $f$ of $W$ as well as an element $\gamma \in \Gamma$ such that (up to a permutation of signs $+$ and $-$) $\Lambda\in [A_f^++\gamma ^*]_\Xi$ and $\Lambda '\in [A_f^-+\gamma ^*]_\Xi$. Then the vector $w$ falls into both closed half planes $A_f^{+0}+\gamma ^*$ and $A_f^{-0}+\gamma ^*$, and thus into $A_f + \gamma ^*$. The latter hyperplane can consequently also be written $H_f + w$, and it follows that $\Lambda \in [H_f^+ + w]_\Xi$ whereas $\Lambda '\in [H_f^- + w ]_\Xi$. This shows that $\Lambda '$ is outside $[C_\Lambda + w_\Lambda]_\Xi$, as desired.\\

Now, it is clear that $\Lambda \in [A]_\Xi $ if and only if one has a subset of the form $[C_\Lambda + w_\Lambda]_\Xi \cap \Pi ^{-1}(B(w_\Lambda, \varepsilon))$ included in $[A]_\Xi $ for some $\varepsilon >0$. Then intersecting with $NS$ gives that $C_\Lambda (w_\Lambda ,\varepsilon )\cap NS$ falls into $ A\cap NS$, and by taking closure and next interior in $\mathbb{R}^n$ one obtains the right-hand inclusion of the statement. Conversely if the right-hand inclusion of the statement occurs for some $\Lambda \in \Xi ^\Gamma$ then we may choose a sequence of nonsingular model sets converging to it, in a manner that the associated sequence of nonsingular vectors falls into (\ref{nei}), and thus into $C_\Lambda (w_\Lambda ,\varepsilon )$. The sequence of nonsingular model sets then lies into $[A]_\Xi $, and since this latter is closed we obtain the result.
\end{proof}

\subsection{Topology of the internal system Ellis semigroup}

Recall that by construction, the Ellis semigroup for the internal system is a closure of the group $\Gamma$, or rather the resulting group of homeomorphisms on the internal system. Thus for any Euclidean subset $A$ one may set a corresponding subset $[A]_E$ to be the closure of $\left\lbrace \gamma \in \Gamma \, \vert \, \gamma ^*\in A\right\rbrace $ in the Ellis semigroup $E(\Xi ^\Gamma,\Gamma )$ of the internal system. We would in fact consider a specific family of Euclidean subsets, namely
\begin{align*}\mathfrak{A}_{\mathrm{Ellis}}:=\lbrace H^\mathfrak{t} +w\, \vert \, H\in \mathfrak{H}_W, \, \mathfrak{t}\in \lbrace -,0,+\rbrace, \, w\in \mathbb{R}^n\rbrace .
\end{align*}
Observe that the above family contains the family $\mathfrak{A}$ of admissible half-spaces, in a strict sense however.

\begin{prop}\label{prop:topology.ellis.internal.system} Any set $[A]_E$, where $ A\in \mathfrak{A}_{Ellis}$, is clopen, and the collection of these sets forms a subbasis for the topology of $E(\Xi ^\Gamma,\Gamma )$. Moreover, for any pair $A,A'$ in $\mathfrak{A}_{\mathrm{Ellis}}$ the following Boolean rules are true:
$$[A\cup A']_E=[A]_E\cup [A']_E, \quad [A]_E^c=[A^c]_E, \quad [A\cap A']_E=[A]_E\cap [A']_E .$$
\end{prop}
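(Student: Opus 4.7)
The plan is to mirror the proof of Proposition \ref{prop:topology.internal.system}, replacing $[\cdot]_\Xi$ by $[\cdot]_E$ throughout. The union rule is immediate because pointwise closure commutes with finite unions, and the intersection rule follows from the other two by De Morgan. The complement rule $[A]_E^c = [A^c]_E$ together with clopenness of $[A]_E$ reduces to showing that the three closures $[H^++w]_E$, $[H+w]_E$, $[H^-+w]_E$ are pairwise disjoint, since their union already covers $E(\Xi^\Gamma, \Gamma)$: the subsets $\{\gamma \in \Gamma : \gamma^* \in H^{\mathfrak{t}}+w\}$ for $\mathfrak{t} \in \{-,0,+\}$ partition $\Gamma$, whose pointwise closure is $E(\Xi^\Gamma, \Gamma)$ by definition.

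For the disjointness, fix a face $f$ of $M$ with $H_f = H$ and a point $c_f \in A_f$, so that $A_f = H + c_f$. I first handle the easier case $[H^++w]_E \cap [H^-+w]_E$: assuming $g$ lies in both, extract nets $\eta_\lambda, \zeta_\lambda \to g$ with $\eta_\lambda^* \in H^+ + w$ and $\zeta_\lambda^* \in H^- + w$, then choose any $\Lambda \in \Xi^\Gamma$ with $\Pi(\Lambda) = c_f - w$ (surjectivity of $\Pi$). Then $w_{\Lambda - \eta_\lambda} = c_f + (\eta_\lambda^* - w) \in A_f^+$ strictly, and similarly $w_{\Lambda - \zeta_\lambda} \in A_f^-$ strictly, so Proposition \ref{prop:local.topology.internal.system} places $\Lambda - \eta_\lambda \in [A_f^+]_\Xi$ and $\Lambda - \zeta_\lambda \in [A_f^-]_\Xi$. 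Taking limits in the clopen sets $[A_f^\pm]_\Xi$ from Proposition \ref{prop:topology.internal.system} places $\Lambda \cdot g \in [A_f^+]_\Xi \cap [A_f^-]_\Xi = \emptyset$, a contradiction. The more delicate case $[H^\pm +w]_E \cap [H+w]_E$ proceeds analogously but requires choosing $\Lambda \in [A_f^-]_\Xi$ with $\Pi(\Lambda) = c_f - w$; I would obtain such $\Lambda$ as an accumulation point of a sequence of non-singular model sets whose $\Pi$-values lie in $A_f^- \cap NS$ and converge to $c_f - w$, using the properness of $\Pi$ from Proposition \ref{prop:internal.morphism.ellis}. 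Such $\Lambda$ has cone $C_\Lambda \subset H^-$ by Proposition \ref{prop:local.topology.internal.system}. For a net $\eta_\mu \to g$ with $\eta_\mu^* - w$ strictly in $H^+$, a direct estimate using the defining linear functional of $H$ yields $(C_\Lambda \cap B(0,\varepsilon)) + (\eta_\mu^* - w) \subset H^+$ for small enough $\varepsilon > 0$, placing $\Lambda - \eta_\mu \in [A_f^+]_\Xi$. For a net $\zeta_\lambda \to g$ with $\zeta_\lambda^* - w \in H$, the inclusion $H^- + H \subset H^-$ places $\Lambda - \zeta_\lambda \in [A_f^-]_\Xi$. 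Passing to limits once more yields $\Lambda \cdot g \in [A_f^+]_\Xi \cap [A_f^-]_\Xi = \emptyset$.

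For the sub-basis property, the pointwise topology on $E(\Xi^\Gamma, \Gamma)$ is generated by sets of the form $\{h : \Lambda \cdot h \in U\}$ with $\Lambda \in \Xi^\Gamma$ and $U$ open in $\Xi^\Gamma$; Proposition \ref{prop:topology.internal.system} lets one restrict to $U = [A]_\Xi$ with $A \in \mathfrak{A}$. Unwinding the cone description of Proposition \ref{prop:local.topology.internal.system} for $\Lambda \cdot h$, combined with the equivariance $\Pi(\Lambda \cdot h) = w_\Lambda + \Pi^*(h)$, expresses such a sub-basic open as a finite combination of sets $[H^{\mathfrak{t}} + w^*]_E$ for $H \in \mathfrak{H}_W$, $\mathfrak{t} \in \{-,0,+\}$ and $w^*$ determined by $\Lambda$ and $A$. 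The chief obstacle throughout is the cone selection used in the disjointness argument: producing a model set $\Lambda$ sitting exactly over the singular point $c_f - w$ and with its cone $C_\Lambda$ lying in a prescribed half-space is precisely where almost canonicity (Assumption 2, through Proposition \ref{prop:topology.internal.system}) and properness of $\Pi$ play an essential role.
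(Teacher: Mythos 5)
Your proof is correct and follows essentially the same route as the paper's: what you call the disjointness of the three closures is the paper's Lemma \ref{lem:ellis.clopen}, and your ``unwinding'' of $V(\Lambda,[A_f^{\pm}]_\Xi)$ into sets $[H^{\mathfrak{t}}+w^*]_E$ is its cornerstone Lemma \ref{lem:cornerstone}, both resting, exactly as in your write-up, on the cone criterion of Proposition \ref{prop:local.topology.internal.system} and on producing a model set over the singular point with a prescribed cone (which the paper borrows from Theorem \ref{theo:topologie.par.cones} and you construct directly via properness of $\Pi$). The only thing to fix is a translation bookkeeping slip: the model set $\Lambda$ over $c_f-w$ with $C_\Lambda\subset H^-$ must be taken in $[A_f^--w]_\Xi=[H^-+w_\Lambda]_\Xi$ (approximated by non-singular points of $(A_f^--w)\cap NS$), not in $[A_f^-]_\Xi$.
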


\begin{proof} From Proposition \ref{prop:topology.internal.system}, the sets $[A]_\Xi$, where $A$ is an admissible half-space, are clopen subsets of the internal system $\Xi ^\Gamma$, and form a subbasis for its topology. It thus follows that the sets
\begin{align*} V\left( \Lambda , [A]_\Xi\right) := \left\lbrace g \in E(\Xi ^\Gamma,\Gamma ) \, \vert \; \Lambda .g\in [A]_\Xi \right\rbrace ,
\end{align*}
where $\Lambda $ is any model set in the internal system and $A$ is any admissible half-space, are clopen subsets of the Ellis semigroup $E(\Xi ^\Gamma,\Gamma )$, and that they form a subbasis for its topology. Moreover, using the fact that $[\gamma ^*+A_f^{\pm}]_\Xi$ is equal to $[A_f^{\pm}]_\Xi .\gamma$ whatever the element $\gamma \in \Gamma$ one can directly check that $V(\Lambda , [\gamma ^*+A_f^{\pm}]_\Xi)$ is equal to $V(\Lambda .(-\gamma ), [A_f^{\pm}]_\Xi)$. This shows that a subbasis for the Ellis semigroup topology is obtained as the collection
\begin{align}\label{sub-basis} \left\lbrace V\left( \Lambda , [A_f^{\pm}]_\Xi\right) \, \vert \; \Lambda \in \Xi ^\Gamma , \, f \text{ face of } M\right\rbrace .
\end{align}
In order to relate these sets with the ones given in the statement we prove a cornerstone lemma to this proposition:

\begin{lem}\label{lem:cornerstone} Let $\Lambda$ be in the internal system $\Xi ^\Gamma$. Then
 \begin{align*} V\left( \Lambda , [A_f^+]_\Xi\right)= \begin{cases}  [A^{+0}_f-w_\Lambda]_E & \text{ if } \mathfrak{c}_\Lambda(H_f)=+, \\ [A^{+}_f-w_\Lambda]_E & \text{ if } \mathfrak{c}_\Lambda(H_f)=-, \\  [A^{+0}_f-w_\Lambda]_E= [A^{+}_f-w_\Lambda]_E & \text{ if } \mathfrak{c}_\Lambda(H_f)=\infty . 
\end{cases}
 \end{align*}
The same statement holds with the $+$ and $-$ signs switched everywhere.
\end{lem}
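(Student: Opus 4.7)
The plan is to combine three ingredients: the clopenness of $V(\Lambda,[A_f^+]_\Xi)$ (already established at the beginning of the proof of Proposition \ref{prop:topology.ellis.internal.system}), the density of $\Gamma$ in $E(\Xi^\Gamma,\Gamma)$, and the local characterization of membership in $[A]_\Xi$ from Proposition \ref{prop:local.topology.internal.system}. By the density-of-clopen principle of Lemma \ref{lem:clopen}, the clopen set $V(\Lambda,[A_f^+]_\Xi)$ coincides with the closure in $E(\Xi^\Gamma,\Gamma)$ of its intersection with $\Gamma$. The whole problem then reduces to identifying the set $\{\gamma\in\Gamma : \Lambda.\gamma\in [A_f^+]_\Xi\}$ and recognising its closure in $E$ as one of the announced blocks $[A_f^{+0}-w_\Lambda]_E$ or $[A_f^+-w_\Lambda]_E$, which by definition are the closures of $\{\gamma : \gamma^*\in A_f^{+0}-w_\Lambda\}$ and $\{\gamma : \gamma^*\in A_f^+-w_\Lambda\}$ respectively.

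Next I would translate the condition $\Lambda.\gamma\in[A_f^+]_\Xi$ geometrically. The point pattern $\Lambda.\gamma$ has parametrization $w_\Lambda+\gamma^*$, and the equality (\ref{cut.up.half.space.translated}) together with the uniqueness part of Proposition \ref{prop:local.topology.internal.system} show that its associated cone is again $C_\Lambda$ and its cone type again $\mathfrak{c}_\Lambda$. Proposition \ref{prop:local.topology.internal.system} then turns the condition into the geometric requirement that $(C_\Lambda\cap B(0,\varepsilon))+(w_\Lambda+\gamma^*)\subset A_f^+$ for some $\varepsilon>0$. This requirement depends only on the position of $w_\Lambda+\gamma^*$ relative to $A_f$: if $w_\Lambda+\gamma^*$ lies strictly in $A_f^+$ the inclusion is automatic for small $\varepsilon$; if strictly in $A_f^-$ it fails; and if $w_\Lambda+\gamma^*\in A_f$ then $A_f=H_f+(w_\Lambda+\gamma^*)$, hence $A_f^+=H_f^++(w_\Lambda+\gamma^*)$, and the requirement is equivalent to $C_\Lambda\subset H_f^+$, i.e. to $\mathfrak{c}_\Lambda(H_f)=+$.

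Splitting into the three cases of the statement then yields the lemma directly. If $\mathfrak{c}_\Lambda(H_f)=+$ the boundary position on $A_f$ is permitted, so the acceptable $\gamma$ are exactly those with $\gamma^*\in A_f^{+0}-w_\Lambda$, whose closure is $[A_f^{+0}-w_\Lambda]_E$. If $\mathfrak{c}_\Lambda(H_f)=-$ only the strict interior of $A_f^+$ qualifies, giving $[A_f^+-w_\Lambda]_E$. If $\mathfrak{c}_\Lambda(H_f)=\infty$ then $H_f\notin\mathfrak{H}_{w_\Lambda}$ means $w_\Lambda\notin\Gamma^*+A_f$, so $w_\Lambda+\gamma^*\in A_f$ is impossible for any $\gamma\in\Gamma$; the open and closed half-space conditions therefore carve out the same subset of $\Gamma$, forcing $[A_f^{+0}-w_\Lambda]_E=[A_f^+-w_\Lambda]_E$. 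The mirror statement with signs switched is obtained by the identical argument applied to $A_f^-$.

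The principal technical point is the boundary analysis when $w_\Lambda+\gamma^*\in A_f$: one must carefully unravel the translation identity $A_f^\pm=H_f^\pm+(w_\Lambda+\gamma^*)$ in that case and verify that a cone $C_\Lambda\subset H_f^+$ placed at that boundary position really stays within $A_f^+$ near its head. This is the only spot where a genuine half-space-versus-hyperplane dichotomy intervenes, and it is precisely where the cut type $\mathfrak{c}_\Lambda(H_f)$ enters the statement. Everything else is bookkeeping with the Boolean rules of Proposition \ref{prop:topology.internal.system} and the definition of $[\,\cdot\,]_E$ as a closure inside $E(\Xi^\Gamma,\Gamma)$.
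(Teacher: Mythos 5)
Your proposal is correct and follows essentially the same route as the paper's own proof: reduce to the $\Gamma$-points of the clopen set $V(\Lambda,[A_f^+]_\Xi)$ via Lemma \ref{lem:clopen}, use the invariance of the cone under $\gamma$-translation and Proposition \ref{prop:local.topology.internal.system} to turn membership into the inclusion $C_\Lambda(\gamma^*,\varepsilon)\subset A_f^+-w_\Lambda$, and then run the same three-way case analysis on the position of $w_\Lambda+\gamma^*$ relative to $A_f$, with the boundary case resolved by whether $C_\Lambda\subset H_f^+$. No gaps.
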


\begin{proof} Recall from Lemma \ref{lem:clopen} that a clopen set of $E(\Xi ^\Gamma,\Gamma )$ is the closure of its subset of $\Gamma$--elements. Now given $V( \Lambda , [A_f^+]_\Xi )$, an element $\gamma \in \Gamma$ lies inside if and only if $\Lambda .\gamma\in [A_f^+]_\Xi$, which happens from Proposition \ref{prop:local.topology.internal.system} if and only if $C_{\Lambda .\gamma}(w_{\Lambda .\gamma}, \varepsilon ) $ embeds into $A^+_f$ for some $\varepsilon >0$. As the cones of $\Lambda$ and its $\gamma$--translate are the same, and because the factor map $\Pi$ is $\Gamma$--equivariant, the previous condition is equivalent to
\begin{align}\label{C3} C_\Lambda(\gamma ^*,\varepsilon ) \subset A^+_f -w_\Lambda
\end{align}
for some $\varepsilon >0$. It is then obvious that:

\begin{itemize}
\item[•] Whenever $\gamma ^*\in A^+_f -w_\Lambda$ this condition is satisfied.

\item[•] Whenever $\gamma ^*\in A^-_f -w_\Lambda$ this condition is not satisfied.

\end{itemize}

Now suppose that $\mathfrak{c}_\Lambda(H_f)=\infty$, so that $H_f$ doesn't belong to the cut type of $w_\Lambda$: Then no element of $\Gamma$ maps to $A_f -w_\Lambda$ under the $*$--map, and thus by taking closure in the Ellis semigroup one has the desired equality in the case $\mathfrak{c}_\Lambda(H_f)=\infty$.
\vspace{0.2cm}

Suppose by contrast that $\mathfrak{c}_\Lambda(H_f)\neq \infty$, so that there exist elements of $\Gamma$ whose image under the $*$--map falls into $A_f -w_\Lambda$. Then for each such $\gamma \in \Gamma$ the hyperplane $A_f-w_\Lambda$ may also be written $H_f+\gamma ^*$, giving $A^+_f-w_\Lambda=H^+_f+\gamma ^*$. Hence such a $\gamma$ satisfies (\ref{C3}) if and only if the cone $C_\Lambda$ lies into $H_f^+$, which can be rewritten as $\mathfrak{c}_\Lambda(H_f)=+$. Again by taking closure in the Ellis semigroup, one has the desired equalities in the case $\mathfrak{c}_\Lambda(H_f)\neq \infty$.
\vspace{0.2cm}

The argument remains valid when interchanging the $+$ and $-$ signs everywhere, completing the proof of the lemma.
\end{proof} 

\begin{lem}\label{lem:ellis.clopen} For each hyperplane $H$ and vector $w\in \mathbb{R}^n$, one has a partition of the Ellis semigroup by clopen sets
\begin{align}\label{partition}
E(\Xi ^\Gamma ,\Gamma )=\left[ H^-  +w\right] _E\sqcup \left[ H^{} +w\right] _E\sqcup \left[ H^+  +w\right] _E .
\end{align}
\end{lem}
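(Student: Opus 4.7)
The plan is to reduce the statement to Lemma~\ref{lem:cornerstone}, exploiting the fact that $V(\Lambda,[A_f^\pm]_\Xi)$ is clopen in $E(\Xi^\Gamma,\Gamma)$ for every $\Lambda\in\Xi^\Gamma$ (the evaluation map $g\mapsto \Lambda.g$ is continuous, and $[A_f^\pm]_\Xi$ is clopen in $\Xi^\Gamma$ by Proposition~\ref{prop:topology.internal.system}), together with Lemma~\ref{lem:clopen}, which recovers any clopen subset of the Ellis semigroup as the closure of its $\Gamma$-points.

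Given $H\in \mathfrak{H}_W$ and $w\in \mathbb{R}^n$, I would first pick a face $f$ of $M$ with $H=H_f$, write $A_f=H+v_f$, and select $\Lambda\in\Xi^\Gamma$ with $w_\Lambda = v_f-w$ by surjectivity of $\Pi$ (Proposition~\ref{prop:parametrization.map.interne}); then $A_f^\pm-w_\Lambda = H^\pm+w$. Applying the cornerstone lemma with $\mathfrak{c}_\Lambda(H)=-$ (respectively $+$) identifies $V(\Lambda,[A_f^+]_\Xi)$ with $[H^+ +w]_E$ (respectively with $[H^{+0}+w]_E$), so both sets are clopen. The only nontrivial step here is realizing a $\Lambda$ in the fiber $\Pi^{-1}(v_f-w)$ with the prescribed sign: when $H\notin \mathfrak{H}_{v_f-w}$ any $\Lambda$ in the fiber has $\mathfrak{c}_\Lambda(H)=\infty$ and the two expressions coincide (consistent with $[H+w]_E$ being empty in this case); when $H\in \mathfrak{H}_{v_f-w}$, one approaches $v_f-w$ through non-singular vectors of $H^\pm+(v_f-w)$ and extracts a convergent subnet of the associated non-singular model sets using properness of $\Pi$, producing a $\Lambda$ in the fiber lying on the desired side of $H$.

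With $[H^\pm+w]_E$ and $[H^{\pm 0}+w]_E$ known to be clopen, the clopenness of $[H+w]_E$ is a Boolean observation: the clopen difference $[H^{+0}+w]_E\setminus [H^+ +w]_E$ has $\Gamma$-points exactly $\{\gamma\in\Gamma:\gamma^*\in H+w\}$, so by Lemma~\ref{lem:clopen} it coincides with the closure of these $\Gamma$-points, namely $[H+w]_E$. Coverage of the partition is immediate since $\Gamma$ is the disjoint union $\{\gamma:\gamma^*\in H^++w\}\sqcup\{\gamma:\gamma^*\in H+w\}\sqcup\{\gamma:\gamma^*\in H^-+w\}$, closure distributes over finite unions, and $E(\Xi^\Gamma,\Gamma)$ is the closure of $\Gamma$. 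For pairwise disjointness, each intersection of two of the three sets is clopen and hence, by Lemma~\ref{lem:clopen}, equals the closure of its $\Gamma$-points, which is empty since the three subsets $H^++w$, $H+w$, $H^-+w$ of $\mathbb{R}^n$ are pairwise disjoint.

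The single delicate point is the existence of $\Lambda\in \Pi^{-1}(v_f-w)$ with a prescribed value of $\mathfrak{c}_\Lambda(H)$ when $v_f-w$ is singular relative to $H$; this directional approach argument is the only place where the local geometry of the internal system (and ultimately the almost canonical hypotheses through Proposition~\ref{prop:topology.internal.system}) actually enters. Everything else reduces to formal consequences of the cornerstone lemma together with Lemma~\ref{lem:clopen}.
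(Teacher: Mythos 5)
Your proposal is correct and follows essentially the same route as the paper: reduce everything to Lemma \ref{lem:cornerstone} by choosing a face $f$ with $H=H_f$ and witnesses $\Lambda$ in the fiber over $w'=v_f-w$ with prescribed sign $\mathfrak{c}_\Lambda(H)$, then conclude by Boolean manipulation of clopen sets and their $\Gamma$-points via Lemma \ref{lem:clopen}. The only cosmetic differences are that the paper obtains the middle term as the complement of the two outer clopen sets rather than as the difference $[H^{+0}+w]_E\setminus[H^++w]_E$, and it outsources the existence of the witnesses $\Lambda$, $\Lambda'$ to a forward reference to Theorem \ref{theo:topologie.par.cones}, whose proof is exactly the properness-of-$\Pi$ net argument you inline.
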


\begin{proof} First observe that by construction the group $\Gamma$ is dense in the Ellis semigroup, and consequently the union of the three right-hand sets stated in the equality must covers the Ellis semigroup. Now select a face $f$ with $H=H_f$ and let $w'\in \mathbb{R}^n$ be such that $H^\mathfrak{t}+w$ can be rewritten as $A_f^\mathfrak{t}-w'$ for each sign $\mathfrak{t}\in \lbrace -,0,+\rbrace$ (this can always be achieved as $H$ and $A_f$ are parallel). This choice of vector $w'$ will be kept along this proof. It is quite clear that the middle term $[H_f  +w]_E$ is nonempty if and only if one has an element $\gamma \in \Gamma$ such that $\gamma ^*\in H_f  +w$, or equivalently in $A_f-w'$, which in turns exactly means that $H$ is a hyperplane of the cut type $\mathfrak{H}_{w'}$. Thus we will consider two cases:
\vspace{0.2cm}

Suppose that $H\in \mathfrak{H}_{w'}$: we may select two cones, both determined by the cut type $\mathfrak{H}_{w'}$, living at opposite sides with respect to $H$. Let us pick two model sets $\Lambda$ and $\Lambda '$ with common associated vector $w'$ in $\mathbb{R}^n$ and associated with these cones, so that $\mathfrak{c}_\Lambda(H)=+$ and $\mathfrak{c}_{\Lambda '}(H)=-$ up to a switch of signs (the existence of such model sets is shown in Theorem \ref{theo:topologie.par.cones} appearing later, whose proof is independent of the present statement). Then by the previous lemma, the set $[H_f^-  +w]_E$ is the clopen subset $V( \Lambda , [A_f^-]_\Xi )$, and is disjoint from the other two since they are both included in $V( \Lambda , [A_f^+]_\Xi )$. In the same way the set $[H_f^+  +w]_E$ is the clopen subset $V( \Lambda ', [A_f^+]_\Xi )$, and is disjoint from the two others since they are both included into $V( \Lambda ', [A_f^-]_\Xi )$. As the left-hand term and the right-hand term are clopen and disjoint from the respective two other sets then the stated union must be disjoint, and the middle term is clopen as well.
\vspace{0.2cm}

If $H\notin \mathfrak{H}_{w'}$ then things are even easier: the middle term becomes empty, and in pretty much the same way as before, by picking only one model set with associated vector $w'$ one can show that the two sets of the union are clopen and disjoint.
\end{proof}

Now the proof of the statement almost immediately follows: By Lemma \ref{lem:ellis.clopen} the sets of the statement are clopen sets, and form a subbasis since any subset of the family (\ref{sub-basis}) can be written as one of them by Lemma \ref{lem:cornerstone}. It remains to show the Boolean rules: The left-hand rule is a direct consequence of the closure operation, whereas the middle rule follows from the family of partitions given by Lemma \ref{lem:ellis.clopen}. The third rule naturally follows from the two others.
\end{proof}

\section{\large{\textbf{Main result on the internal system Ellis semigroup}}}\label{main internal}

\subsection{The face semigroup of a convex polytope}

Given a real cut and project scheme $(\mathbb{R}^n, \Sigma, \mathbb{R}^d)$ with an almost canonical window $W$ in the internal space, we shall define the \textit{face semigroup} of $W$ \cite{Bro, Sa}.

Let $\mathfrak{H}_W$ be the family of linear hyperplanes parallel to the faces of $W$. It defines a stratification of $\mathbb{R}^n$ by cones of dimension between $0$ and $n$ (those cones are called \textit{faces} in \cite{Sa}), that is, by nonempty sets of the form
\begin{align}\label{face} \bigcap _{H\in \mathfrak{H}_W} H^{\mathfrak{t}(H)} .
\end{align}
where $\mathfrak{t}(H)$ is a symbol among $ \left\lbrace -,0,+\right\rbrace $ for each $ H\in \mathfrak{H}_W$. Then each such cone $C$ is determined through a unique map $\mathfrak{t}_C: \mathfrak{H}_W\longrightarrow \left\lbrace -,0,+\right\rbrace $, which we call here its \textit{cone type}. A special class of cones is that of \textit{chambers}, that is, the cones of maximal dimension $n$, which are open in $\mathbb{R}^n$ and are precisely those with a nowhere-vanishing cone type. On the other extreme is the unique cone of dimension $0$, namely the singleton $\left\lbrace  0\right\rbrace $, whose type is entirely vanishing and which we denote by $\mathfrak{o}$.
\vspace{0.2cm}

Let us denote by $\mathfrak{T}_W$ the above set of cones, and define on this set a semigroup law: if $C,C'\in \mathfrak{T}_W$ are given, then the product $C.C'$ is the face whose type is given by
\begin{align*} \mathfrak{t}_{C.C'}(H)=\mathfrak{t}_C.\mathfrak{t}_{C'}(H):= \begin{cases} \mathfrak{t}_{C'}(H) \text{ if }\mathfrak{t}_C(H)= 0,  \\ \mathfrak{t}_C(H)\, \text{  else.} \\ \end{cases}
\end{align*}

The reading direction is from right to left, as for actions: first we look at the value of $\mathfrak{t}_{C'}(H)$, we keep it when $\mathfrak{t}_C(H)= 0$ and else replace it by $\mathfrak{t}_C(H)$, which in this case makes us forget the existence of $\mathfrak{t}_{C'}$. It may easily checked that this product law is well defined on $\mathfrak{T}_W$, that is, the product of two (non empty) cones is again a (non empty) cone, and is associative.

\begin{de}\label{def:face.semigroup} The face semigroup associated with the polytope $W$ in $\mathbb{R}^n$ is the set $\mathfrak{T}_W$ equipped with the above product law.
\end{de}

It is clear from the formula that $\mathfrak{o}$ is an identity for $\mathfrak{T}_W$. Moreover, any cone $C$ satisfies the equality $C.C=C$, that is, is \textit{idempotent} in $\mathfrak{T}_W$. There moreover exists a natural partial order on the face semigroup under which $C\leqslant C'$ if and only if $C'$ is a lower-dimensional facet of $C$, or equivalently when the inclusion $C'\subseteq \overline{C}$ occurs. This may be rephrased by means of the semigroup law on $\mathfrak{T}_W$, as we have
\begin{align*} C\leqslant C' \quad \Longleftrightarrow \quad \mathfrak{t}_C = \mathfrak{t}_{C'}.\mathfrak{t}_C
\end{align*}

With respect to this order, the chambers are the minimal cones whereas $\mathfrak{o}$ is the (unique) maximal cone in the face semigroup. Some authors use the reverse order instead, but it appears more convenient for later needs to define the order as above.

\subsection{Taking $\Gamma$ into account}

Here we introduce a modified version of the face semigroup obtained from an almost canonical window $W$ of the internal space $\mathbb{R}^n$ of some real cut and project scheme.
\vspace{0.2cm}

Let us call a cone $C$ of the face semigroup \textit{nontrivial} whenever the origin in $\mathbb{R}^n$ is an accumulation point of elements of $C\cap \Gamma ^*$. We moreover denote the family of nontrivial cones of the face semigroup by $\mathfrak{T}_{W,\Gamma }$, and refer it as the \textit{nontrivial face semigroup}. It is at this point not clear whether $\mathfrak{T}_{W,\Gamma }$ is a subsemigroup of $\mathfrak{T}_{W}$. However to convince ourselves that it is the case, we may observe that the product $C.C'$ of two cones of the face semigroup is the only cone containing a small head of the cone $C'$ when this latter is shifted by a small vector of $C$, and that this preserves the subset $\mathfrak{T}_{W,\Gamma }$ in the face semigroup.
\vspace{0.2cm}

Now given a nontrivial cone $C$, as $C\cap \Gamma ^*$ accumulates at $0$, the vector space $\langle C\rangle $ spanned by $C$ admits a subgroup $\langle C\rangle \cap \Gamma ^*$ which cannot be uniformly discrete, and thus is "dense along some subspace". More precisely we state in our setting a theorem of \cite{Se}:

\begin{theo}\label{theo:Senechal} The vector space $\langle C\rangle $ uniquely decomposes as a direct sum $ V\oplus D$, where $V\cap \Gamma ^*$ is dense in $ V$, $D\cap \Gamma ^*$ is uniformly discrete in $D$, and $\langle C\rangle \cap \Gamma ^* = \left( V \cap \Gamma ^*\right) \oplus \left(  D \cap \Gamma ^*\right) $.
\end{theo}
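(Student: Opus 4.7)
The plan is to reduce the statement to the classical structure theorem for closed subgroups of Euclidean spaces. Since $\Sigma$ is a lattice of $\mathbb{R}^{n+d}$, the projection $\Gamma^*$ is a finitely generated subgroup of $\mathbb{R}^n$, hence so is the subgroup $A := \langle C\rangle \cap \Gamma^*$. Let me consider its closure $\bar A$ inside the Euclidean space $\langle C\rangle$: by the classical structure theorem for closed subgroups of $\mathbb{R}^m$, $\bar A$ decomposes as $\bar A = V \oplus L$, where $V$ is the connected component of the identity (necessarily a linear subspace) and $L$ is a discrete subgroup lying in a complementary subspace $D := \mathrm{span}(L)$. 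Since $A$ spans $\langle C\rangle $ as a vector space and $A \subseteq \bar A \subseteq V \oplus D$, this provides the direct sum $\langle C\rangle = V \oplus D$.

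The crucial step is to verify that $V \cap \Gamma^*$ is dense in $V$. For any $v \in V$, density of $A$ in $\bar A$ provides a sequence $a_n \in A$ converging to $v$; decomposing $a_n = v_n + l_n$ along $V \oplus L$, continuity of the projection onto $D$ along $V$ gives $l_n \to 0$, but discreteness of $L$ forces $l_n = 0$ for large $n$. Hence $a_n = v_n$ lies in $A \cap V = V \cap \Gamma^*$ for $n$ large enough, which establishes the density.

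For the complementary part, I verify $D \cap \Gamma^* = L$, which yields uniform discreteness automatically since every discrete subgroup of a Euclidean space is uniformly discrete by translation invariance. The inclusion $L \subseteq D \cap \Gamma^*$ is immediate from $L \subseteq A$ and $L \subseteq D$; conversely, any $a \in D \cap \Gamma^* \subseteq A \subseteq \bar A$ admits a unique decomposition $a = v + l \in V \oplus L$, and the condition $a \in D$ together with $V \cap D = \{0\}$ forces $v = 0$, so $a = l \in L$. The equality $\langle C\rangle \cap \Gamma^* = (V \cap \Gamma^*) \oplus (D \cap \Gamma^*)$ then follows: for $a \in A$, the decomposition $a = v + l$ inside $\bar A$ places $l \in L = D \cap \Gamma^*$ and $v = a - l$ in $A \cap V = V \cap \Gamma^*$; direct-ness is inherited from $V \cap D = \{0\}$.

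For uniqueness, the subspace $V$ is canonically characterized as the maximal vector subspace of $\langle C\rangle$ whose intersection with $\Gamma^*$ is dense — equivalently, as the identity component of $\bar A$. Indeed, any other subspace $V'$ with $V' \cap \Gamma^*$ dense in $V'$ must satisfy $V' \subseteq \overline{V' \cap \Gamma^*} \subseteq \bar A$, so being connected through $0$ it lies in $V$; conversely, if the corresponding complement $D'$ has $D' \cap \Gamma^*$ uniformly discrete, the continuous projection $V \to D'$ along $V'$ would send the dense subset $V \cap \Gamma^*$ into the uniformly discrete subgroup $D' \cap \Gamma^*$, which is impossible unless the projection is identically zero, so $V \subseteq V'$. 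The main obstacle lies in the interplay between topological density of $A$ in $\bar A$ and algebraic discreteness of $L$, which is precisely what the structure theorem packages; once it is invoked, the remaining verifications are routine bookkeeping.
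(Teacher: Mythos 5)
First, on the comparison itself: the paper does not prove this statement at all --- it is imported verbatim from Senechal's book \cite{Se} --- so you are supplying a proof where the paper only gives a citation. Your route through the structure theorem for closed subgroups of $\mathbb{R}^m$ is the natural one, and your density argument for $V\cap\Gamma^*$ (push a sequence $a_k\to v\in V$ through the projection onto $L$ along $V$, then use discreteness of $L$ to kill the $L$-component eventually) is correct. But there is one genuine gap. The structure theorem applied to $\overline{A}$ produces a discrete complement $L$ of $V$ inside $\overline{A}$, \emph{not} inside $A=\langle C\rangle\cap\Gamma^*$; your words ``immediate from $L\subseteq A$'' assume something the theorem does not deliver, and the identities $D\cap\Gamma^*=L$ and $\langle C\rangle\cap\Gamma^*=(V\cap\Gamma^*)\oplus(D\cap\Gamma^*)$ really do depend on choosing $L$ well. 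Concretely, let $A\subset\mathbb{R}^2$ be generated by $(1,0)$, $(\alpha,0)$ and $(\beta,1)$ with $1,\alpha,\beta$ rationally independent: then $\overline{A}=\mathbb{R}\times\mathbb{Z}$ and $V=\mathbb{R}\times\{0\}$, and the structure theorem is equally content to hand you $L=\{0\}\times\mathbb{Z}$, which meets $A$ only in $0$; for $D=\mathrm{span}(L)$ one gets $(V\cap A)\oplus(D\cap A)=(\mathbb{Z}+\alpha\mathbb{Z})\times\{0\}\neq A$, so the conclusion fails for that choice. The missing step is short but must be said: $\overline{A}/V\cong\mathbb{Z}^k$ and the image of $A$ in this discrete quotient is dense, hence onto, so one lifts a basis of $\overline{A}/V$ to elements of $A$ and lets $L$ be the (automatically discrete, rank-$k$) subgroup they generate; with that choice of $L$ everything else you wrote goes through.

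Two smaller points. You assert without justification that $A$ spans $\langle C\rangle$; non-triviality of $C$ only says that $C\cap\Gamma^*$ accumulates at $0$, which does not force $\langle C\rangle\cap\Gamma^*$ to span $\langle C\rangle$. This is harmless --- if it fails, take $D$ to be any complement of $V$ in $\langle C\rangle$ containing $\mathrm{span}(L)$; one still has $D\cap\overline{A}=L$ and the rest is unchanged --- but it should be addressed rather than asserted. Finally, your uniqueness argument establishes uniqueness of $V$ only, which is in fact all that can be true: in the example above both $\mathbb{R}(\beta,1)$ and $\mathbb{R}(1+\beta,1)$ serve as $D$, so the statement's ``uniquely'' must be read as referring to $V$ (and your characterization of $V$ as the identity component of $\overline{A}$, equivalently the maximal subspace meeting $\Gamma^*$ densely, is the right one).
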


Now given a nontrivial cone $C$ of the face semigroup with decomposition $\langle C\rangle = V\oplus D$ provided by the previous theorem, the summand $V$ is nontrivial and thus one may attach to it another smaller cone,
\begin{align*} \mathsf{C}:= C\cap V.
\end{align*}
We call $\mathsf{C}$ the \textit{plain cone} associated to $C$. By construction, the cone $\mathsf{C}$ is open in the space $V$ and spans this latter space, and $\mathsf{C}\cap \Gamma ^*$ is a dense subset of the plain cone $\mathsf{C}$. It is easy to observe that $\mathsf{C}=C$ when and only when the set $C\cap \Gamma ^*$ is dense in the cone $C$. For any nontrivial cone type $\mathfrak{t}\in \mathfrak{T}_{W, \Gamma }$ we may define $\mathsf{C}_\mathfrak{t}$ to be the plain cone associated with $C_\mathfrak{t}$.

\subsection{The main theorem for internal system Ellis semigroup}

Let us consider an Ellis transformation $g\in E(\Xi ^\Gamma,\Gamma)$ with associated translation vector $w_g$ in $\mathbb{R}^n$. Given a hyperplane $H\in \mathfrak{H}_W$, it has been shown in Lemma \ref{lem:ellis.clopen} that the mapping $g$ falls into one and only one clopen subset of the form $[H^{\mathfrak{t}} +w_g]_E$, whose sign for any hyperplane $H\in \mathfrak{H}_W$ determines a face type $\mathfrak{t}_g$ uniquely. To see that $\mathfrak{t}_g$ is a face type in the above sense, that is, is associated with a nonempty cone $C_g$ of the stratification obtained from $\mathfrak{H}_W$, observe that from the Boolean rules of Proposition \ref{prop:topology.ellis.internal.system} one has
\begin{align}\label{intersection.ellis} g\in  \bigcap _{H \in \mathfrak{H}_W} \left[ H^{\mathfrak{t}_g(H)} +w_g\right] _E = \left[ \bigcap _{H \in \mathfrak{H}_W} H^{\mathfrak{t}_g(H)} +w_g\right] _E = \left[ C_g +w_g\right] _E,
\end{align}
which ensure that $C_g$ must be nonempty. Having related the internal system Ellis semigroup with the face semigroup just defined, we are now able to set our main theorem concerning the internal system Ellis semigroup:

\begin{theo}\label{theo:principal.interne} The mapping associating to any transformation $g$ the couple $(w_g,\mathfrak{t}_g)$ establishes an isomorphism between the Ellis semigroup $E(\Xi ^\Gamma,\Gamma)$ and the subsemigroup of the direct product $\mathbb{R}^n \times \mathfrak{T}_{W,\Gamma  }$ given by
\begin{align*}\bigsqcup _{\mathfrak{t}\in \mathfrak{T}_{W,\Gamma }} \left[ \langle \mathsf{C}_\mathfrak{t}\rangle +\Gamma ^*\right] \times \left\lbrace \mathfrak{t}\right\rbrace .
\end{align*}
This isomorphism becomes a homeomorphism when the above union is equipped with the following convergence class: $(w_\lambda,\mathfrak{t}_\lambda)\longrightarrow (w,\mathfrak{t})$ if and only if
\begin{align*} \forall \varepsilon >0, \exists \, \delta _\lambda >0 \text{ such that  } \; \mathsf{C}_{\mathfrak{t}_\lambda}(w_\lambda,\delta _\lambda)\subset \mathsf{C}_{\mathfrak{t}}(w,\varepsilon ) \; \text{  for large enough }\lambda .
\end{align*}
The Ellis semigroup $E(\Xi ^\Gamma,\Gamma)$ has a first countable topology.
\end{theo}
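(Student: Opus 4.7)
The plan is to prove the statement in four stages: (i) show the map $\phi: g \mapsto (w_g, \mathfrak{t}_g)$ lands in the stated subsemigroup, (ii) establish bijectivity, (iii) verify the semigroup morphism, and (iv) identify the Ellis topology with the stated convergence class, from which first countability follows.

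For stage (i), I would take $g \in E(\Xi^\Gamma,\Gamma)$ and a net $\gamma_\lambda \in \Gamma$ with $\gamma_\lambda \to g$ (density of $\Gamma$); combining (\ref{intersection.ellis}) with lemma \ref{lem:ellis.clopen} forces $\gamma_\lambda^* - w_g \in C_{\mathfrak{t}_g}$ eventually (up to the sign convention of $\Pi^*$), while $\Pi^*$ controls the translation component. Applying Senechal's theorem \ref{theo:Senechal} to $\langle C_{\mathfrak{t}_g}\rangle = V \oplus D$ and decomposing $\gamma_\lambda^* - w_g = v_\lambda + d_\lambda$, the differences $\gamma_\lambda^* - \gamma_{\lambda'}^* \in \Gamma^* \cap \langle C_{\mathfrak{t}_g}\rangle = (V\cap \Gamma^*) \oplus (D \cap \Gamma^*)$ have their $D$-component in a uniformly discrete set; since $d_\lambda - d_{\lambda'} \to 0$, the $D$-components are eventually constant, and since $d_\lambda \to 0$, they are eventually zero. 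Hence $\gamma_\lambda^* - w_g \in V \cap C_{\mathfrak{t}_g} = \mathsf{C}_{\mathfrak{t}_g}$ eventually, simultaneously yielding $\mathfrak{t}_g \in \mathfrak{T}_{W,\Gamma}$ and $w_g \in \Gamma^* + \langle \mathsf{C}_{\mathfrak{t}_g}\rangle$.

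For (ii), injectivity uses Hausdorffness of $E(\Xi^\Gamma,\Gamma)$ together with the observation that $(w_g, \mathfrak{t}_g)$ determines the clopen membership of $g$ in every sub-basic $[H^{\mathfrak{t}^*}+w']_E$: the sign is geometrically determined by $w_g - w'$ relative to $H$ when $w_g - w' \notin H$, and equals $\mathfrak{t}_g(H)$ otherwise. Surjectivity: given $(w,\mathfrak{t})$ in the subsemigroup, write $w = \gamma_0^* + v$ with $\gamma_0 \in \Gamma$ and $v \in \langle \mathsf{C}_\mathfrak{t}\rangle$; density of $V \cap \Gamma^*$ in $V$ yields $\gamma_\lambda \in \Gamma$ with $\gamma_\lambda^* - w \in \mathsf{C}_\mathfrak{t}$ tending to $w$, and properness of $\Pi^*$ produces an Ellis limit $g$ with $w_g = w$ and $\mathfrak{t}_g = \mathfrak{t}$ (since $\mathsf{C}_\mathfrak{t} \subseteq \bigcap_H H^{\mathfrak{t}(H)}$). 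For (iii), the translation component is immediate from $\Pi^*$ being a morphism; for the cone component, I approximate $g, g'$ by cone-directed nets $\gamma_\lambda, \eta_\mu$ and compute the iterated limit $g.g' = \lim_\mu \lim_\lambda (\gamma_\lambda + \eta_\mu)$ in $E$, tracking the sign of the sum $(\gamma_\lambda^* - w_g) + (\eta_\mu^* - w_{g'})$ relative to each $H \in \mathfrak{H}_W$: a hyperplane where one of the cone types is non-zero has sign determined by the corresponding summand (the other being negligible), while a hyperplane where that summand sits in $H$ has sign inherited from the other. This yields exactly the face semigroup product of definition \ref{def:face.semigroup}.

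For (iv), I translate eventual membership in the sub-basic clopens into a geometric condition: sub-basics $[H^{\mathfrak{t}}+w']_E$ with $w_g - w' \notin H$ are ensured by $w_\lambda \to w$ alone, while those with $w_g - w' \in H$ force the cone head $\mathsf{C}_{\mathfrak{t}_\lambda}(w_\lambda, \delta_\lambda)$ onto the $\mathfrak{t}_g(H)$-side of $H+w'$; intersecting over all $H \in \mathfrak{H}_W$ packages these requirements into the single containment $\mathsf{C}_{\mathfrak{t}_\lambda}(w_\lambda, \delta_\lambda) \subset \mathsf{C}_\mathfrak{t}(w, \varepsilon)$. First countability is then immediate: letting $\varepsilon$ range over $1/k$, $k \in \mathbb{N}$, produces a countable neighborhood basis at every $g$. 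The hard part will be stage (i): the transition from the full cone $C_{\mathfrak{t}_g}$ (naturally produced by the clopen partition) to the plain cone $\mathsf{C}_{\mathfrak{t}_g}$ (which controls approach of $\Gamma^*$-points) is the essential place where Senechal's decomposition and the almost canonical assumption \ref{as2} combine, ensuring approximating nets do not leak into a lower-dimensional non-plain cone; without this the semigroup image would not lie in $\mathbb{R}^n \times \mathfrak{T}_{W,\Gamma}$ and bijectivity would fail.
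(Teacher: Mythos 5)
Your proposal is correct and follows essentially the same route as the paper's proof: the clopen partition of Lemma \ref{lem:ellis.clopen} defines the cone type, $\Pi^*$ supplies the translation component, Senechal's decomposition (Theorem \ref{theo:Senechal}) converts the full cone into the plain cone and identifies the allowed translations as $\langle \mathsf{C}_\mathfrak{t}\rangle +\Gamma ^*$, properness of $\Pi^*$ gives surjectivity, and the neighborhood basis $\left[ C_g +w_g\right] _E\cap (\Pi ^*)^{-1}(B(w_g,\varepsilon ))$ yields both the convergence class and first countability. The only step you compress is the passage in your stage (iv) from containment of the $\Gamma^*$-points of the cone heads to containment of the heads themselves, which the paper handles by first deducing $V_{\mathfrak{t}_\lambda}+w_\lambda \subseteq V_{\mathfrak{t}}+w$ from density of $\Gamma^*$ in these affine subspaces and then invoking topological regularity of the plain-cone heads.
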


The convergence class of the statement is there to precise the full family of nets and limit points which obey the above condition. This family completely characterizes the Ellis semigroup topology since, being derived from the topology of the internal system Ellis semigroup, it satisfies a correct set of axioms which permit to recover the closure operator on the Ellis semigroup, and thus its topology (see \cite{Kelley}).
\vspace{0.2cm}

The remaining part of this section is devoted to the proof of the above theorem. To this end we decompose the proof into three parts: the first one states the existence of a semigroup isomorphism between the internal system Ellis semigroup and a subsemigroup of the direct product $\mathbb{R}^n \times \mathfrak{T}_{W}$. The second step states the proof that the isomorphic image maps into $\mathbb{R}^n \times \mathfrak{T}_{W,\Gamma  }$ and is of the form stated above. In a third part we then show the topological part of the statement.

\paragraph{6.3.1 $\; $ Step 1: Existence of the semigroup isomorphism}

\begin{prop}\label{prop:morphism.type} The mapping $E(\Xi ^\Gamma,\Gamma) \longrightarrow \mathfrak{T}_W$ that associates to each transformation $g$ its face type $\mathfrak{t}_g$ is a semigroup morphism.
\end{prop}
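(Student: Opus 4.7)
To prove $g \mapsto \mathfrak{t}_g$ is a semigroup morphism, I need to verify, for every pair $g, g' \in E(\Xi^\Gamma, \Gamma)$ and every hyperplane $H \in \mathfrak{H}_W$, the equality $\mathfrak{t}_{g.g'}(H) = (\mathfrak{t}_g . \mathfrak{t}_{g'})(H)$. The translation vector is handled already by Proposition~\ref{prop:internal.morphism.ellis}, which provides $w_{g.g'} = w_g + w_{g'}$, so the remaining task---in view of the clopen tripartition of $E(\Xi^\Gamma,\Gamma)$ furnished by Lemma~\ref{lem:ellis.clopen}---is to locate $g.g'$ inside the unique clopen piece $[H^{(\mathfrak{t}_g . \mathfrak{t}_{g'})(H)} + w_g + w_{g'}]_E$.

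My plan is to approximate both $g$ and $g'$ from $\Gamma$ and to combine the approximations through the right-topological continuity of the Ellis product together with the continuity of the action of individual $\Gamma$-elements (which are homeomorphisms of $\Xi^\Gamma$). Lemma~\ref{lem:ellis.clopen} identifies $[H^{\mathfrak{t}_g(H)} + w_g]_E$ as the closure in $E(\Xi^\Gamma,\Gamma)$ of $\{\gamma \in \Gamma : \gamma^* - w_g \in H^{\mathfrak{t}_g(H)}\}$, so I would pick nets $\gamma_\lambda \to g$ and $\gamma'_\mu \to g'$ with $\gamma_\lambda^* - w_g$ and $(\gamma'_\mu)^* - w_{g'}$ eventually in the prescribed half-spaces. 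Right-topological continuity ($h \mapsto g.h$ is continuous on $E$) gives $g.\gamma'_\mu \to g.g'$, while continuity of right-multiplication by the homeomorphism $\gamma'_\mu$ gives $\gamma_\lambda . \gamma'_\mu = \gamma_\lambda + \gamma'_\mu \to g.\gamma'_\mu$ for each $\mu$. Thus $g.g'$ arises as an iterated pointwise limit of the double $\Gamma$-net $\gamma_\lambda + \gamma'_\mu$ whose $^*$-image satisfies $(\gamma_\lambda + \gamma'_\mu)^* - (w_g + w_{g'}) = (\gamma_\lambda^* - w_g) + ((\gamma'_\mu)^* - w_{g'})$.

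The remaining step is a case analysis on the nine possibilities for the pair $(\mathfrak{t}_g(H), \mathfrak{t}_{g'}(H)) \in \{-, 0, +\}^2$. When at least one of the two signs is zero or the two signs agree, the additive stability properties of the half-spaces ($H^s + H^s \subseteq H^s$, $H^s + H \subseteq H^s$ for $s \in \{-,+\}$, and $H + H \subseteq H$) immediately show that the sum of perturbations lies eventually in $H^{(\mathfrak{t}_g . \mathfrak{t}_{g'})(H)}$, placing $\gamma_\lambda + \gamma'_\mu$ in the required clopen subset of $\Gamma$ eventually, and closedness then propagates this through both limits to $g.g'$. The main obstacle will be the case of opposite nonzero signs, where the sum carries no a priori definite sign and the naive argument breaks down. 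The resolution is to exploit the asymmetry of the iterated limit---$\gamma_\lambda^* - w_g$ is made arbitrarily small before $(\gamma'_\mu)^* - w_{g'}$ is---together with the intermediate identity $\mathfrak{t}_{g.\gamma'_\mu} = \mathfrak{t}_g$ (obtained by running the inner $\lambda$-limit first, so that the approximants lie in the clopen set $[H^{\mathfrak{t}_g(H)} + w_g + (\gamma'_\mu)^*]_E$), and then reorganising the clopen structure of $E$ around $w_g + w_{g'}$ via the Boolean rules of Proposition~\ref{prop:topology.ellis.internal.system} to confirm that the sign dictated by the face semigroup rule is the one that survives the outer limit.
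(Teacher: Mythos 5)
Your overall strategy is the one the paper uses: approximate the outer factor from $\Gamma$ via right-topological continuity, note that $g.\gamma'$ has the same cone as $g$ but base point $w_g+(\gamma')^*$, and decide which piece of the clopen tripartition of Lemma~\ref{lem:ellis.clopen} at $w_g+w_{g'}$ catches the limit. The cases you settle by additivity of half-spaces ($H^s+H^s\subseteq H^s$, $H^s+H\subseteq H^s$, $H+H\subseteq H$) are indeed immediate, and your iterated double-net limit is legitimate.

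The problem is that the mixed-sign case, which you correctly identify as the crux, is the \emph{only} nontrivial content of the proposition, and your sketch does not decide it. Worse, ``the sign dictated by the face semigroup rule'' is not a well-posed target here: with the product on $\mathfrak{T}_W$ read literally as in Definition~\ref{def:face.semigroup} (left factor wins when nonzero), the rule outputs $\mathfrak{t}_g(H)$, whereas the sign that actually survives the iterated limit is $\mathfrak{t}_{g'}(H)$, the sign of the \emph{outer} factor --- precisely because, as you observe, the inner perturbation $\gamma_\lambda^*-w_g$ is collapsed first. To close the case, say $\mathfrak{t}_g(H)=+$ and $\mathfrak{t}_{g'}(H)=-$, you must make this quantitative: $g.\gamma'_\mu$ lies in $\left[ H^{+}+w_g+(\gamma'_\mu)^*\right]_E\cap(\Pi^*)^{-1}\bigl(B(w_g+(\gamma'_\mu)^*,\varepsilon)\bigr)$ for every $\varepsilon>0$, and once $\varepsilon$ is smaller than the distance from $(\gamma'_\mu)^*-w_{g'}\in H^-$ to $H$, the Euclidean set $\bigl(H^{+}+w_g+(\gamma'_\mu)^*\bigr)\cap B(w_g+(\gamma'_\mu)^*,\varepsilon)$ is entirely contained in $H^-+w_g+w_{g'}$; hence $g.\gamma'_\mu\in\left[ H^-+w_g+w_{g'}\right]_E$, and closedness of that set carries the conclusion through the outer limit. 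Without this step the argument is incomplete, and as written it risks landing on the opposite (wrong) sign. Note that the paper's proof sidesteps the difficulty entirely by casing on the sign of the outer factor alone: when $\mathfrak{t}_{g'}(H)=\pm$ the openness of $H^{\pm}$ forces the answer regardless of $\mathfrak{t}_g(H)$, and $\mathfrak{t}_g(H)$ only intervenes when $\mathfrak{t}_{g'}(H)=0$ --- which is exactly the case your additivity argument already handles.
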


\begin{proof} We have to show that given two transformations $g$ and $h$ the face types $\mathfrak{t}_{g.h}$ and $\mathfrak{t}_g.\mathfrak{t}_h$ are equal. By (\ref{intersection.ellis}) the transformation $g.h$ lies in the clopen subset $[C_{g.h}+w_{g.h}]_E$. Since, by construction, $\Gamma$ is dense in the Ellis semigroup, and since the composition law on this latter is right-continuous, one can find a $\gamma \in \Gamma$ sufficiently close to $h$ in the sense that
 $$  \text{(i)}\; \; \gamma \in [C_h + w_h ]_E,  \; \; \; \; \; \; \; \; \text{(ii)}\; \; g.\gamma \in [C_{g.h}+w_{g.h}]_E . $$
From Lemma \ref{lem:ellis.clopen} together with the Boolean rules of Proposition \ref{prop:topology.ellis.internal.system}, one can deduce from (i) that $\gamma ^*\in C_h + w_h$, or equivalently $(\gamma ^*-w_h)\in C_h$ in the internal space. Moreover, as the transformation $g.\gamma$ lies both in the clopen subset $[C_{g.\gamma}+w_{g.\gamma}]_E$ and the open subset $(\Pi ^*)^{-1}(B(w_{g.\gamma },\varepsilon))$, again from the density of $\Gamma $ in the Ellis semigroup together with point (ii), one can find an element $\gamma _\varepsilon \in \Gamma$ sufficiently close to $g.\gamma$ so that
\begin{align*} \gamma ^*_\varepsilon \in \left( C_{g.h}+w_{g.h}\right) \cap \left( C_{g.\gamma}+w_{g.\gamma} \right) \cap B(w_{g.\gamma },\varepsilon).
\end{align*}
Since the cone associated with $g.\gamma$ is equal to the one associated with $g$, the previous fact implies that
\begin{align*} C_g(\gamma ^*-w_h,\varepsilon ) \cap C_{g.h}\neq \emptyset \; \forall \varepsilon >0 \; \;  \text{with} \; \;  \gamma ^*-w_h\in C_h .
\end{align*} 
Let us now consider three cases about a hyperplane $H\in \mathfrak{H}_W$:
\vspace{0.2cm}

$ \mathfrak{t}_h(H) =+  \;$  In this case the vector $\gamma ^*-w_h\in C_h $ falls into the open half-space $ H^+$, and thus one may find a $\varepsilon _0$ with $C_g(\gamma ^*-w_h,\varepsilon _0)$ included in $ H^+$, so that $H^+$ must intersect the cone $C_{g.h}$. This forces $C_{g.h}\subset H^+$, or equivalently $\mathfrak{t}_{g.h}(H)=+$.
\vspace{0.2cm}

$\mathfrak{t}_h(H)=-\; $  By the same type of argument one can show that $\mathfrak{t}_{g.h}(H)=-$.
\vspace{0.2cm}

$\mathfrak{t}_h(H)=0 \;$  In this last case one has $\gamma ^*-w_h\in H$ and thus $C_g(\gamma ^*-w_h,\varepsilon )\subset H^{\mathfrak{t}_g(H)}$ whatever the symbol $\mathfrak{t}_g(H)$. It thus follows that $H^{\mathfrak{t}_g(H)}\cap C_{g.h}$ is nonempty, which necessary gives $C_{g.h}\subset H^{\mathfrak{t}_g(H)}$, or equivalently $\mathfrak{t}_{g.h}(H)=\mathfrak{t}_g(H)$.
\vspace{0.2cm}

The above three cases show that the cone type $\mathfrak{t}_{g.h}$ is equal to the composition $\mathfrak{t}_g.\mathfrak{t}_h$, as desired.
\end{proof}

Combining the previous proposition with the existence of the onto morphism of Proposition \ref{prop:internal.morphism.ellis}, we see that the mapping that associates to each transformation $g$ in $E(\Xi ^\Gamma,\Gamma)$ the couple $(w_g, \mathfrak{t}_g)$ in the product semigroup $\mathbb{R}^n\times \mathfrak{T}_W$ is a semigroup morphism. Thus to settle Step $1$, we only need to show injectivity:
\vspace{0.2cm}

Suppose for that purpose that two transformations $g$ and $h$ satisfy $w_g=w_h=:w$ in the internal space. Then by using the subbasis of Proposition \ref{prop:topology.ellis.internal.system} one can find a vector $w_0$ as well as a hyperplane $H\in \mathfrak{H}_W$ such that $g$ and $h$ fall into different clopen subsets among the partition
\begin{align*}
E(\Xi ^\Gamma ,\Gamma )=\left[ H^-  +w_0\right] _E\sqcup \left[ H^{} +w_0\right] _E\sqcup \left[ H^+  +w_0\right] _E .
\end{align*}
Thus one must have that $w$ and $w_0$ are equal up to a vector of the hyperplane $H$, and this implies that the signs $\mathfrak{t}_g(H)$ and $\mathfrak{t}_h(H)$ must be different. This exactly means that the associated cone types $\mathfrak{t}_g$ and $\mathfrak{t}_h$ are different, and the proof of Step $1$ is complete.

\paragraph{6.3.2 $\; $  Step 2: Determination of the isomorphic image} The problem now is to identify the subsemigroup of $\mathbb{R}^n \times \mathfrak{T}$ isomorphic to the internal system Ellis semigroup via the previous mapping. To that end, one may describe this subsemigroup as a disjoint union
\begin{align*} 
\bigsqcup _{\mathfrak{t}\in \mathfrak{T}_W} \mathbb{R}^n_{\mathfrak{t}} \times \left\lbrace \mathfrak{t}\right\rbrace 
\end{align*}
for some Euclidean subsets $\mathbb{R}^n_\mathfrak{t}$, the \textit{allowed translations} of a cone type $\mathfrak{t}$, which we need to identify. A first point about this is the following lemma.

\begin{lem}\label{lem:translations.permises} For any cone type $\mathfrak{t}\in \mathfrak{T}_W$ with associated cone $C_\mathfrak{t}$ one has
\begin{align*} \mathbb{R}^n_\mathfrak{t} = \left\lbrace w\in \mathbb{R}^n \, \vert \, (C_{\mathfrak{t}}+w)\cap \Gamma ^* \text{ accumulates at } w\right\rbrace .
\end{align*}
\end{lem}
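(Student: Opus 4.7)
The plan is to show both inclusions, using proposition~\ref{prop:internal.morphism.ellis} (properness of $\Pi^*$), the description \eqref{intersection.ellis} of the cone type in terms of the clopen sets $[C_g + w_g]_E$, and the partition lemma~\ref{lem:ellis.clopen}.

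First I would prove the forward inclusion. Suppose $w \in \mathbb{R}^n_\mathfrak{t}$, i.e.\ $(w,\mathfrak{t}) = (w_g, \mathfrak{t}_g)$ for some $g \in E(\Xi^\Gamma,\Gamma)$. By \eqref{intersection.ellis} we have $g \in [C_\mathfrak{t} + w]_E$. Since $[C_\mathfrak{t}+w]_E$ is clopen in $E(\Xi^\Gamma,\Gamma)$ (proposition~\ref{prop:topology.ellis.internal.system}) and is by definition the closure of its $\Gamma$-elements $\{\gamma \in \Gamma : \gamma^* \in C_\mathfrak{t}+w\}$, there exists a net $(\gamma_\lambda)$ in $\Gamma$ with $\gamma_\lambda^* \in C_\mathfrak{t}+w$ and $\gamma_\lambda \to g$ in $E(\Xi^\Gamma,\Gamma)$. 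Applying the continuous morphism $\Pi^*$ of proposition~\ref{prop:internal.morphism.ellis} gives $\gamma_\lambda^* = \Pi^*(\gamma_\lambda) \to \Pi^*(g) = w$. Hence $(C_\mathfrak{t}+w)\cap \Gamma^*$ accumulates at $w$.

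For the reverse inclusion, suppose $(C_\mathfrak{t}+w)\cap \Gamma^*$ accumulates at $w$ and pick a net $(\gamma_\lambda)$ in $\Gamma$ with $\gamma_\lambda^* \in C_\mathfrak{t}+w$ and $\gamma_\lambda^* \to w$. Choose a compact neighborhood $K$ of $w$ in $\mathbb{R}^n$; by properness of $\Pi^*$, the preimage $(\Pi^*)^{-1}(K)$ is compact in $E(\Xi^\Gamma,\Gamma)$. The tail of $(\gamma_\lambda)$ lies in this compact set, so after extraction we may assume $\gamma_\lambda \to g$ in $E(\Xi^\Gamma,\Gamma)$ for some $g$. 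Continuity of $\Pi^*$ gives $w_g = w$, and the net being eventually in the clopen set $[C_\mathfrak{t}+w]_E$ forces $g \in [C_\mathfrak{t}+w]_E$.

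The main obstacle is then to upgrade this last membership to the equality $\mathfrak{t}_g = \mathfrak{t}$. This is where I would invoke lemma~\ref{lem:ellis.clopen}: for each hyperplane $H\in\mathfrak{H}_W$, the Ellis semigroup splits as the disjoint union $[H^- + w]_E \sqcup [H + w]_E \sqcup [H^+ + w]_E$. The membership $g \in [C_\mathfrak{t} + w]_E$ together with \eqref{intersection.ellis} forces $g \in [H^{\mathfrak{t}(H)} + w]_E$ for every $H$, while by definition $g \in [H^{\mathfrak{t}_g(H)} + w_g]_E = [H^{\mathfrak{t}_g(H)} + w]_E$. The disjointness of the partition then forces $\mathfrak{t}_g(H) = \mathfrak{t}(H)$ for every $H \in \mathfrak{H}_W$, hence $\mathfrak{t}_g = \mathfrak{t}$, so $(w,\mathfrak{t})$ lies in the isomorphic image and $w \in \mathbb{R}^n_\mathfrak{t}$.
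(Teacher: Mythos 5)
Your proof is correct and follows essentially the same route as the paper's: the forward inclusion by taking a net of $\Gamma$-elements in $[C_\mathfrak{t}+w_g]_E$ converging to $g$ and pushing it through $\Pi^*$, and the reverse inclusion by using properness of $\Pi^*$ to extract an accumulation point $g$ with $w_g=w$ lying in the closed set $[C_\mathfrak{t}+w]_E$. The only cosmetic differences are that the paper phrases the compactness argument with a filterbase rather than a subnet, and that you spell out via lemma~\ref{lem:ellis.clopen} the final identification $\mathfrak{t}_g=\mathfrak{t}$, which the paper simply asserts.
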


\begin{proof} Given some $\mathfrak{t}\in \mathfrak{T}_W$ with associated cone $C_\mathfrak{t}$, its set of allowed translations $\mathbb{R}^n_\mathfrak{t}$ is by construction $ \mathbb{R}^n_\mathfrak{t} = \left\lbrace  w_g \, \vert \, g\in E(\Xi ^\Gamma ,\Gamma) \text{ and }  \mathfrak{t}_g=\mathfrak{t} \right\rbrace $.
\vspace{0.2cm}

Let us show $"\supseteq "$: If $w$ is such that $(C_{\mathfrak{t}}+w)\cap \Gamma ^*$ accumulates at $ w$ then the intersection $(C_{\mathfrak{t}}+w)\cap B(w, \varepsilon )\cap \Gamma ^* $ is non-empty for any $\varepsilon > 0$, and thus the family $\left\lbrace [C_{\mathfrak{t}}+w]_E\cap (\Pi ^*)^{-1}(B(w,\varepsilon ))\right\rbrace _{\varepsilon > 0}$ forms a filter base in the space $E(\Xi ^\Gamma,\Gamma)$. In turn, the morphism $\Pi ^*$ is, by Proposition \ref{prop:map.ellis.loc.compact}, a proper map so this filter base, for $0< \varepsilon < \varepsilon _0$, lies in the fixed compact subset $(\Pi ^*)^{-1}\left( \overline{B}(w,\varepsilon _0)\right) $ and thus possess an accumulation point $g$. This Ellis transformation necessarily satisfies $w_g=w$, and because the set $[C_{\mathfrak{t}}+w]_E= [C_{\mathfrak{t}}+w_g]_E$ is closed, containing the above filter base, it thus contains $g$. We deduce that $C_g= C_{\mathfrak{t}}$, or equivalently $\mathfrak{t}_g = \mathfrak{t}$, giving that $w = w_g \in \mathbb{R}^n_\mathfrak{t}$.
\vspace{0.2cm}

Conversely we show $"\subseteq "$: Given some cone type $\mathfrak{t}$ and some Ellis transformation $g$ with $\mathfrak{t}= \mathfrak{t}_g$, then as $g$ lies in $[C_g +w_g]_E$ one can select a net of elements of $(C_g +w_g)\cap \Gamma ^*=   (C_\mathfrak{t} +w_g)\cap \Gamma ^*$ converging to $g$ in the internal system Ellis semigroup. Applying $\Pi ^*$ we obtain a net of $(C_\mathfrak{t} +w_g)\cap \Gamma ^* $ converging to $w_g$ in the Euclidean space $\mathbb{R}^n$, so that $ (C_{\mathfrak{t}}+w_g)\cap \Gamma ^*$ accumulates at $w_g$.
\end{proof}

Let now $\mathfrak{T}_{W,0}$ be the homomorphic image of the internal system Ellis semigroup in the face semigroup $\mathfrak{T}_{W}$ via the morphism of Proposition \ref{prop:morphism.type}. Then it precisely consists of those cone types that have a nonempty associated subset $\mathbb{R}^n_\mathfrak{t}$ of allowed translations. From the definition of the plain face semigroup $\mathfrak{T}_{W,\Gamma }$, a face type $\mathfrak{t}$ is nontrivial if and only if $0$ lies in $\mathbb{R}^n_\mathfrak{t}$, which shows in particular that $\mathfrak{T}_{W,0}$ contains the plain face semigroup $\mathfrak{T}_{W,\Gamma }$.
\vspace{0.2cm}

We will now write any Euclidean subset $\mathbb{R}^n_\mathfrak{t}$ in a more suitable form. Obviously it is sufficient to consider cone types of the homomorphic image $\mathfrak{T}_{W,0}$. Observe that for any such cone type, thei associated Euclidean subset of allowed translations is stable under $\Gamma ^*$--translation.

\begin{prop}\label{prop:décomp.translations.permises} Let $\mathfrak{t}\in \mathfrak{T}_{W,0}$, with $\langle C_\mathfrak{t} \rangle = V_\mathfrak{t} \oplus D_\mathfrak{t}$ being its direct sum decomposition from Theorem \ref{theo:Senechal}. Then one has
\begin{align*} \mathbb{R}^n_{\mathfrak{t}}= V_{\mathfrak{t}}+ \Gamma ^*.
\end{align*}
\end{prop}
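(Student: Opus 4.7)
The strategy is to prove the two inclusions separately, using Lemma~\ref{lem:translations.permises} (which describes $\mathbb{R}^n_\mathfrak{t}$ in terms of accumulation of $\Gamma^*$ along translates of $C_\mathfrak{t}$), the $\Gamma^*$-stability of $\mathbb{R}^n_\mathfrak{t}$ recorded just before the statement, and the decomposition provided by Theorem~\ref{theo:Senechal}.

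\textbf{Inclusion $\mathbb{R}^n_\mathfrak{t}\subseteq V_\mathfrak{t}+\Gamma^*$.} Given $w\in\mathbb{R}^n_\mathfrak{t}$, Lemma~\ref{lem:translations.permises} supplies a sequence $\gamma_n^*\in\Gamma^*$ with $\gamma_n^*-w\in C_\mathfrak{t}$ and $\gamma_n^*\to w$. The differences $\gamma_n^*-\gamma_1^*$ lie in $\Gamma^*\cap(C_\mathfrak{t}-C_\mathfrak{t})\subseteq\Gamma^*\cap\langle C_\mathfrak{t}\rangle$ and converge to $w-\gamma_1^*$, which consequently belongs to the closed subspace $\langle C_\mathfrak{t}\rangle$. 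Writing the Senechal decompositions $\gamma_n^*-\gamma_1^*=v_n+d_n$ with $v_n\in V_\mathfrak{t}\cap\Gamma^*$, $d_n\in D_\mathfrak{t}\cap\Gamma^*$, and $w-\gamma_1^*=u^*+d^*$ with $u^*\in V_\mathfrak{t}$, $d^*\in D_\mathfrak{t}$, the continuity of the two projections onto $V_\mathfrak{t}$ and $D_\mathfrak{t}$ forces $v_n\to u^*$ and $d_n\to d^*$. Uniform discreteness of $D_\mathfrak{t}\cap\Gamma^*$ then makes $d_n$ eventually equal to $d^*$, so $d^*\in\Gamma^*$, and therefore $w=(\gamma_1^*+d^*)+u^*\in\Gamma^*+V_\mathfrak{t}$.

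\textbf{Inclusion $V_\mathfrak{t}+\Gamma^*\subseteq\mathbb{R}^n_\mathfrak{t}$.} By $\Gamma^*$-stability of $\mathbb{R}^n_\mathfrak{t}$ it is enough to establish $V_\mathfrak{t}\subseteq\mathbb{R}^n_\mathfrak{t}$. For $v\in V_\mathfrak{t}$ I exhibit a sequence $\gamma_n^*\in\Gamma^*$ with $\gamma_n^*-v\in C_\mathfrak{t}$ and $\gamma_n^*\to v$. Since $\mathsf{C}_\mathfrak{t}$ is a non-empty open cone in $V_\mathfrak{t}$, I pick $u_0\in\mathsf{C}_\mathfrak{t}$ and a radius $r>0$ with $B(u_0,r)\subseteq\mathsf{C}_\mathfrak{t}$ (the ball taken in $V_\mathfrak{t}$); by homogeneity of the cone, $B(\lambda u_0,\lambda r)\subseteq\mathsf{C}_\mathfrak{t}$ for every $\lambda>0$. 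For any $\lambda_n\to 0$, the density of $V_\mathfrak{t}\cap\Gamma^*$ in $V_\mathfrak{t}$ provides $\gamma_n^*\in V_\mathfrak{t}\cap\Gamma^*$ inside $B(v+\lambda_n u_0,\lambda_n r)$, and this choice forces both $\gamma_n^*-v\in\mathsf{C}_\mathfrak{t}\subseteq C_\mathfrak{t}$ and $\gamma_n^*\to v$, as required.

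\textbf{Main obstacle and byproduct.} The delicate step is the first inclusion, where the defect $w-\gamma_1^*$ must be located inside $\langle C_\mathfrak{t}\rangle$ and its $D_\mathfrak{t}$-component pinned to $\Gamma^*$ through the interplay between continuity of the projections and uniform discreteness of $D_\mathfrak{t}\cap\Gamma^*$. As a byproduct, the proposition shows $\mathfrak{T}_{W,0}=\mathfrak{T}_{W,\Gamma}$, since the identity $\mathbb{R}^n_\mathfrak{t}=V_\mathfrak{t}+\Gamma^*$ automatically contains $0$ whenever the left-hand side is non-empty.
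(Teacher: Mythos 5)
Your argument follows essentially the same route as the paper's: both inclusions are obtained from Lemma \ref{lem:translations.permises}, with the forward inclusion resting on the splitting $\langle C_\mathfrak{t}\rangle\cap\Gamma^*=(V_\mathfrak{t}\cap\Gamma^*)\oplus(D_\mathfrak{t}\cap\Gamma^*)$ of Theorem \ref{theo:Senechal}, continuity of the skew projections, and uniform discreteness of $D_\mathfrak{t}\cap\Gamma^*$ (you translate by $\gamma_1^*$ where the paper first reduces $w$ modulo $\Gamma^*$ into $\langle C_\mathfrak{t}\rangle$, which is only a bookkeeping difference), and the reverse inclusion resting on density of $V_\mathfrak{t}\cap\Gamma^*$ in $V_\mathfrak{t}$ and openness of the plain cone.

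The one point you leave unjustified is the non-emptiness of $\mathsf{C}_\mathfrak{t}=C_\mathfrak{t}\cap V_\mathfrak{t}$, on which your second inclusion entirely depends. This is not automatic: an open cone spanning $V_\mathfrak{t}\oplus D_\mathfrak{t}$ can avoid the subspace $V_\mathfrak{t}$ altogether (an open half-plane in $\mathbb{R}^2$ misses the boundary line except at the excluded origin). The paper closes this by an explicit observation inside the proof: once $d_n$ is eventually constant equal to $d^*$, the vectors $\gamma_n^*-w=v_n-u^*$ lie eventually in both $C_\mathfrak{t}$ and $V_\mathfrak{t}$, exhibiting points of $\mathsf{C}_\mathfrak{t}$. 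Your first inclusion already contains exactly this data, so the gap is easily filled, but as written the step ``pick $u_0\in\mathsf{C}_\mathfrak{t}$'' is an unsupported assertion. With that sentence added, the proof is complete, and your closing byproduct $\mathfrak{T}_{W,0}=\mathfrak{T}_{W,\Gamma}$ coincides with the remark the paper makes immediately after the proposition.
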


\begin{proof} For $\mathfrak{t}\in \mathfrak{T}_{W,0}$ and $\langle C_\mathfrak{t} \rangle = V_\mathfrak{t} \oplus D_\mathfrak{t}$, denote by $P^V$ (resp. $P^D$) the skew projection of $\langle C_\mathfrak{t} \rangle$ with range $V_{\mathfrak{t}}$ and kernel $D_{\mathfrak{t}}$ (resp. the skew projection with range $ D_{\mathfrak{t}}$ and kernel $V_{\mathfrak{t}}$). Then, from the particular form of the decomposition, one has $P^V(\langle C_\mathfrak{t} \rangle\cap \Gamma ^*)=  V_{\mathfrak{t}}\cap \Gamma ^*$ and $P^D( \langle C_\mathfrak{t} \rangle\cap \Gamma ^*)= D_{\mathfrak{t}}\cap \Gamma ^*$.
\vspace{0.2cm}

Let us show first that $\mathbb{R}^n_{\mathfrak{t}}$ lies in $ \langle C_\mathfrak{t} \rangle + \Gamma ^*$: Any vector $w\in \mathbb{R}^n_{\mathfrak{t}}$ admits some $\gamma ^*$ in $(C_{\mathfrak{t}}+w)\cap \Gamma ^*$, so that $\gamma ^*-w$ lies in $C_{\mathfrak{t}}$ and thus in $\langle C_\mathfrak{t} \rangle$. So does the vector $w-\gamma ^*$, giving that $w$ lies in $ \langle C_\mathfrak{t} \rangle + \Gamma ^*$.
\vspace{0.2cm}

Now we more precisely show that $\mathbb{R}^n_{\mathfrak{t}}$ lies in $ V_{\mathfrak{t}}+ \Gamma $:
Given $w\in \mathbb{R}^n_{\mathfrak{t}}$, one may write $w= w' + \gamma ^*$ with $w'\in \langle C_\mathfrak{t} \rangle$ and $\gamma \in \Gamma ^*$, $w'$ itself being in $\mathbb{R}^n_{\mathfrak{t}}$ as this latter is stable under $\Gamma ^*$--translation. It thus suffices to prove that $w'$ lies in $V_{\mathfrak{t}}+ \Gamma $ to conclude. From the previous lemma, $w'$ is the limit point of a sequence $(\gamma ^*_k)$ of elements in $(C_{\mathfrak{t}}+w')\cap \Gamma ^*$, in turn included in $\langle C_\mathfrak{t} \rangle \cap \Gamma ^*$. Thus $P^D(\gamma ^*_k)$ converges to $P^D(w')$ and $P^V(\gamma ^*_k)$ converges to $P^V(w')$. But as the sequence $(P^D(\gamma ^*_k))$ lies in the uniformly discrete subset $ D_{\mathfrak{t}}\cap \Gamma ^*$ of $ D_{\mathfrak{t}}$, it must be eventually constant, equal to $P^D(w')$ for great enough $k$. Hence $P^D(w')$ lies in $\Gamma ^*$, which gives $w'= P^W(w')+ P^D(w')\in V_{\mathfrak{t}}+ \Gamma ^*$, as desired.
\vspace{0.2cm}

We want to observe that the sequence eventually satisfies $P^V(\gamma ^*_k)= \gamma ^*_k- P^D(w')\in (C_{\mathfrak{t}}+w')\cap \Gamma ^*- P^D(w')$, with $P^D(w')\in \Gamma ^*$, and thus $P^V(\gamma ^*_k)\in (C_{\mathfrak{t}}+P^V(w'))\cap \Gamma ^*$. Hence $P^V(\gamma ^*_k)-P^V(w')= P^V(\gamma ^*_k-w')$ lies in both $ V_{\mathfrak{t}}$ and $C_{\mathfrak{t}}$ eventually, which ensures that the intersection $\mathsf{C}_\mathfrak{t}:=C_{\mathfrak{t}}\cap V_{\mathfrak{t}}$ is nonempty.
\vspace{0.2cm}

Now we show that $\mathbb{R}^n_{\mathfrak{t}}$ contains $ V_{\mathfrak{t}}+ \Gamma$: To that end it suffices from $\Gamma ^*$--invariance to show that it contains $ V_{\mathfrak{t}}$. First it is clear that the subset $\mathsf{C}_\mathfrak{t}$ is a (nonempty) open cone of the space $V_{\mathfrak{t}}$, since is the intersection of  $C_{\mathfrak{t}}$ which is open in its own spanned space $\langle C_\mathfrak{t} \rangle $ with the subspace $V_{\mathfrak{t}}$. Let now $ w\in V_{\mathfrak{t}}$ be given. Then $\mathsf{C}_\mathfrak{t}$ is open in $W_{\mathfrak{t}}$ and is a cone pointed at $0$, so that $\mathsf{C}_\mathfrak{t}+w$ is an open cone of $V_{\mathfrak{t}}$ pointed at $w$. But from the density of $V_\mathfrak{t}\cap \Gamma ^*$ in $V_{\mathfrak{t}}$ one can obtain $w$ as an accumulation point of $\mathsf{C}_\mathfrak{t}+w \cap \Gamma ^*$ and thus of $(C_{\mathfrak{t}}+w)\cap \Gamma ^*$, showing that $w\in \mathbb{R}^n_{\mathfrak{t}}$, as desired.
\end{proof}

From the previous proposition one gets that any cone type $\mathfrak{t}$ of $\mathfrak{T}_{W,0}$ has the origin $0$ as allowed translation, and thus is an element of $\mathfrak{T}_{W,\Gamma}$. This shows that the internal system Ellis semigroup is isomorphic with a subsemigroup of the direct product $\mathbb{R}^n\times \mathfrak{T}_{W,\Gamma}$, and that its isomorphic image is of the form stated in Theorem \ref{theo:principal.interne}, once we recall that $V_\mathfrak{t}$ is spanned by the pain cone $\mathsf{C}_\mathfrak{t}$ for any $\mathfrak{t}\in \mathfrak{T}_{W,\Gamma}$. This completes Step $2$.

\paragraph{6.3.3 $\; $ Step 3: The topology of convergence}

Let us first show the first countability property of the internal system Ellis semigroup: From the injectivity of the mapping associating to any transformation $g$ the couple $(w_g, \mathfrak{t}_g)$, one can deduce that $g$ is the only transformation in its fiber with respect to $\Pi ^*$ falling into the clopen subset $\left[ C_g +w_g\right] _E$ of the Ellis semigroup. It follows by Lemma \ref{lem:base.voisinages.ouverts} that a neighborhood basis of $g$ is provided by the intersections 
\begin{align}\label{ellis.local.basis}\left[ C_g +w_g\right] _E\cap (\Pi ^*)^{-1}(B(w_g,\varepsilon )).
\end{align} 
It is then clear that one can extract a countable subbasis of this family, completing the argument. Now we wish to show the bicontinuity of the stated isomorphism, and to that end we let $(g_\lambda )$ be a net of the Ellis semigroup with associated net $(w_\lambda,\mathfrak{t}_\lambda)$ in the direct product $\mathbb{R}^n\times\mathfrak{T}_{W,\Gamma}$, and $g$ be some Ellis transformation with associated couple $(w,\mathfrak{t})$. Let us first state a useful lemma:

\begin{lem}\label{lem:plain.cone} There exists an $\varepsilon _0>0$ such that, for any $\mathfrak{t}\in \mathfrak{T}_{W,\Gamma }$ and $w\in \mathbb{R}^n_{\mathfrak{t}}= V_\mathfrak{t}+ \Gamma ^*$, we have $$C_{\mathfrak{t}}(w,\varepsilon )\cap \Gamma ^* = \mathsf{C}_{\mathfrak{t}}(w,\varepsilon )\cap \Gamma ^* \; \; \;  \forall \; 0<\varepsilon \leqslant\varepsilon _0.$$
\end{lem}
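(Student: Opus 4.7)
The reverse inclusion $\mathsf{C}_\mathfrak{t}(w,\varepsilon)\cap \Gamma^* \subseteq C_\mathfrak{t}(w,\varepsilon)\cap \Gamma^*$ is trivial from $\mathsf{C}_\mathfrak{t} = C_\mathfrak{t}\cap V_\mathfrak{t}\subseteq C_\mathfrak{t}$, so the real content is the forward inclusion. The strategy is to use the direct sum decomposition $\langle C_\mathfrak{t}\rangle = V_\mathfrak{t}\oplus D_\mathfrak{t}$ provided by Theorem \ref{theo:Senechal} together with the uniform discreteness of $D_\mathfrak{t}\cap \Gamma^*$, and to obtain the uniform $\varepsilon_0$ by taking a minimum over the (finite) set $\mathfrak{T}_{W,\Gamma}$.

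Fix $\mathfrak{t}\in \mathfrak{T}_{W,\Gamma}$ and let $P^V_\mathfrak{t}, P^D_\mathfrak{t}$ denote the associated skew projections on $\langle C_\mathfrak{t}\rangle$, which I will extend arbitrarily to linear maps on $\mathbb{R}^n$. For $w\in V_\mathfrak{t} + \Gamma^*$, decompose $w = v_w + \gamma_w^*$ with $v_w\in V_\mathfrak{t}$ and $\gamma_w\in \Gamma$. Now suppose $\gamma^*\in C_\mathfrak{t}(w,\varepsilon)\cap \Gamma^*$, so that $\gamma^* - w \in C_\mathfrak{t}$ and $|\gamma^* - w|<\varepsilon$. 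The first step is to observe that
$$\gamma^* - \gamma_w^* \;=\; (\gamma^*-w) + v_w \;\in\; \langle C_\mathfrak{t}\rangle \cap \Gamma^*,$$
so by Theorem \ref{theo:Senechal} it splits as $\alpha + \beta$ with $\alpha \in V_\mathfrak{t}\cap \Gamma^*$ and $\beta \in D_\mathfrak{t}\cap \Gamma^*$. Consequently $P^D_\mathfrak{t}(\gamma^* - w) = \beta$ lies in the uniformly discrete set $D_\mathfrak{t}\cap \Gamma^*$.

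The key estimate is then $|\beta| \leqslant \|P^D_\mathfrak{t}\|\cdot |\gamma^* - w| < \|P^D_\mathfrak{t}\|\cdot \varepsilon$. Let $\delta_\mathfrak{t}>0$ be the minimal distance between two distinct points of $D_\mathfrak{t}\cap \Gamma^*$, and set
$$\varepsilon_0 \;:=\; \min_{\mathfrak{t}\in \mathfrak{T}_{W,\Gamma}} \frac{\delta_\mathfrak{t}}{\|P^D_\mathfrak{t}\|+1},$$
which is strictly positive because $\mathfrak{T}_{W,\Gamma}$ is a finite subset of $\mathfrak{T}_W$ (since $\mathfrak{H}_W$ is finite). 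For any $\varepsilon \leqslant \varepsilon_0$ the estimate forces $\beta = 0$, hence $\gamma^* - w \in V_\mathfrak{t}\cap C_\mathfrak{t} = \mathsf{C}_\mathfrak{t}$, and combined with $|\gamma^*-w|<\varepsilon$ one obtains $\gamma^* \in \mathsf{C}_\mathfrak{t}(w,\varepsilon)$, as required.

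The only mildly subtle point is the well-definedness of $\beta$: the decomposition $w = v_w + \gamma_w^*$ is ambiguous up to an element of $V_\mathfrak{t}\cap \Gamma^*$, but shifting this ambiguity only affects $\alpha$, not $\beta = P^D_\mathfrak{t}(\gamma^*-w)$, so the argument is unaffected. The main conceptual ingredient is Theorem \ref{theo:Senechal}, and the main technical input is that finiteness of $\mathfrak{T}_{W,\Gamma}$ lets us bound $\|P^D_\mathfrak{t}\|$ and $\delta_\mathfrak{t}^{-1}$ uniformly; there is no genuine obstacle.
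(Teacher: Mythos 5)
Your proof is correct and follows essentially the same route as the paper: the decomposition $\langle C_\mathfrak{t}\rangle = V_\mathfrak{t}\oplus D_\mathfrak{t}$ of Theorem \ref{theo:Senechal}, the uniform discreteness of $D_\mathfrak{t}\cap\Gamma^*$ forcing the $D$-component of $\gamma^*-w$ to vanish, and a minimum over the finite set $\mathfrak{T}_{W,\Gamma}$. If anything, your version is slightly more careful than the paper's, which absorbs the norm of the skew projection $P^D_\mathfrak{t}$ into its choice of ``radius of discreteness'' $\varepsilon_\mathfrak{t}$ without making the bound explicit.
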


\begin{proof} Clearly the cone $C_{\mathfrak{t}}(w,\varepsilon )$ contains $ \mathsf{C}_{\mathfrak{t}}(w,\varepsilon )$ for all $\varepsilon >0$. Conversely let $\mathfrak{t}\in \mathfrak{T}_{W,\Gamma }$ be chosen, with associated cone $C_{\mathfrak{t}}$ in $\mathbb{R}^n$ and the direct sum decomposition
$\langle C_{\mathfrak{t}}\rangle= V_{\mathfrak{t}}\oplus D_{\mathfrak{t}}$ provided by Theorem \ref{theo:Senechal}. As $ D_{\mathfrak{t}}\cap \Gamma ^*$ is uniformly discrete in $D_{\mathfrak{t}}$, with $\varepsilon _{\mathfrak{t}}>0$ being some radius of discreteness, we must have $$ \langle C_{\mathfrak{t}}\rangle \cap B(w, \varepsilon _{\mathfrak{t}}) \cap \Gamma ^*= (V_{\mathfrak{t}}+w) \cap B(w, \varepsilon _{\mathfrak{t}})\cap \Gamma ^*$$ for any $w\in V_\mathfrak{t}+ \Gamma ^*$. Hence by intersecting with $C_{\mathfrak{t}}+w$ we obtain 
\begin{align*} C_{\mathfrak{t}}(w,\varepsilon _{\mathfrak{t}})\cap \Gamma ^*= (C_{\mathfrak{t}}+w)\cap (V_{\mathfrak{t}}+w)\cap B(w, \varepsilon _{\mathfrak{t}})\cap \Gamma ^*= \mathsf{C}_{\mathfrak{t}}(w,\varepsilon _{\mathfrak{t}})\cap \Gamma ^* .
\end{align*}
Finally, taking $\varepsilon _0$ to be the minimum over $\varepsilon _{\mathfrak{t}}$, $\mathfrak{t}\in \mathfrak{T}_{W,\Gamma }$, gives the statement.
\end{proof}

Then $ g_\lambda $ converges to $g$ if and only if for any $\varepsilon >0$, which can be chosen less than the constant $\varepsilon _0$ of Lemma \ref{lem:plain.cone}, there is some net of positive real numbers $(\delta _\lambda )$, which can be chosen less than the constant $\varepsilon _0$ as well, such that one has for great enough $\lambda$:
\begin{align*}   \left[ C_{\mathfrak{t}_\lambda} +w_\lambda\right] _E\cap (\Pi ^*)^{-1}(B(w_\lambda ,\delta _\lambda ))\; \subset \; \left[ C_\mathfrak{t} +w\right] _E\cap (\Pi ^*)^{-1}(B(w ,\varepsilon )) . 
\end{align*}
By Lemma \ref{lem:plain.cone}, intersecting with $\Gamma ^*$ leads for great enough $\lambda$ to
\begin{align*} \mathsf{C}_{\mathfrak{t}_\lambda}(w_\lambda ,\delta _\lambda )\cap \Gamma ^* \; \subset \;  \mathsf{C}_{\mathfrak{t}}(w,\varepsilon )\cap \Gamma ^* .
\end{align*}
Now the affine space generated by $\mathsf{C}_{\mathfrak{t}_\lambda}(w_\lambda ,\delta _\lambda )$ is precisely $V_{\mathfrak{t}_\lambda}+w_\lambda $ which contains, since $w_\lambda$ is an allowed translation for $\mathfrak{t}_\lambda$, a dense subset of elements of $\Gamma ^*$. The same occurs about $w$ with respect to $\mathfrak{t}$, and thus we get for great enough $\lambda$ the inclusions
\begin{align*}V_{\mathfrak{t}_\lambda}+w_\lambda \; \subset \; V_{\mathfrak{t}}+w .
\end{align*}

As $\mathsf{C}_{\mathfrak{t}}(w,\varepsilon )$ is a topologically regular open subset of $V_{\mathfrak{t}}+w$, its intersection with $V_{\mathfrak{t}_\lambda}+w_\lambda$ forms an open topologically regular subset of this latter affine space, containing $\mathsf{C}_{\mathfrak{t}_\lambda}(w_\lambda ,\delta _\lambda )\cap \Gamma ^*$. As $\mathsf{C}_{\mathfrak{t}_\lambda}(w_\lambda ,\delta _\lambda )$ is a topologically regular open subset of $V_{\mathfrak{t}_\lambda}+w_\lambda$ as well, taking closure an next interior in $V_{\mathfrak{t}_\lambda}+w_\lambda$ provides for great enough $\lambda$
\begin{align*} \mathsf{C}_{\mathfrak{t}_\lambda}(w_\lambda ,\delta _\lambda )\; \subset \;  \mathsf{C}_{\mathfrak{t}}(w,\varepsilon ) ,
\end{align*}
thus giving the $\Rightarrow $ part of the statement.
\vspace{0.2cm}

Conversely, let us suppose that for any $\varepsilon >0$, which can be chosen less than the constant $\varepsilon _0$ of Lemma \ref{lem:plain.cone}, there is some net of positive real numbers $(\delta _\lambda )$, which can be chosen less than the constant $\varepsilon _0$ as well, such that one has $\mathsf{C}_{\mathfrak{t}_\lambda}(w_\lambda ,\delta _\lambda ) \subset \mathsf{C}_{\mathfrak{t}}(w,\varepsilon ) \subset C_\mathfrak{t}+w$ for great enough $\lambda$. Now the first point is that the net $(w_\lambda)$ converges to $w$ in $\mathbb{R}^n$, and so $g_\lambda$ falls into the inverse image of any ball $B(w, \varepsilon )$ for great enough $\lambda$. Secondly, any $g_\lambda $ has a neighborhood of the form $\left[ C_{\mathfrak{t}_\lambda} +w_\lambda \right] _E\cap (\Pi ^*)^{-1}(B(w_\lambda ,\delta _\lambda ))$, which is contained in the subset $\left[ C_{\mathfrak{t}_\lambda}(w_\lambda ,\delta _\lambda ) \right] _E =\left[ \mathsf{C}_{\mathfrak{t}_\lambda}(w_\lambda ,\delta _\lambda ) \right] _E$ and thus in $\left[ C_\mathfrak{t} +w \right] _E$ for great enough $\lambda$. Combining these two arguments we deduce from the neighborhood basis formula (\ref{ellis.local.basis}) that $g_\lambda$ converges to $g$ in the internal system Ellis semigroup.
\vspace{0.2cm}

This completes the proof of Theorem \ref{theo:principal.interne}.\begin{flushright}
$\square $
\end{flushright}

\section{\large{\textbf{Results on the hull Ellis semigroup and additional algebraic features}}}\label{main}

We arrive at our main result, namely, the algebraic and topological description of the Ellis semigroup for a hull $\mathbb{X}$ of almost canonical model sets together with its $\mathbb{R}^d$--action.

\subsection{The main result}

From Theorem \ref{theo:suspension.ellis}, any transformation $\mathsf{g}$ in the semigroup $E(\mathbb{X},\mathbb{R}^d)$ may be written as $\tilde{g}-s$ where $g$ is a transformation in $E(\Xi ^\Gamma , \Gamma )$ and $s$ a vector of $\mathbb{R}^d$, and with $g$ uniquely defined up to an element of $\Gamma$. Thus we may associate to any transformation $\mathsf{g}=\tilde{g}-s$ the cone type of any underlying transformation $g\in E(\Xi ^\Gamma , \Gamma )$, which we write $\mathfrak{t}_\mathsf{g}$, thus providing a semigroup morphism from the hull Ellis semigroup $E(\mathbb{X},\mathbb{R}^d)$ to the nontrivial face semigroup $\mathfrak{T}_{W,\Gamma }$. We are now able to formulate the main result of this work, which is completely deduced from Theorems \ref{theo:suspension.ellis} and \ref{theo:principal.interne}:

\begin{theo}\label{theo:principal} The mapping that associates to any transformation $\mathsf{g}$ the couple $(z_\mathsf{g},\mathfrak{t}_\mathsf{g})$ establishes an isomorphism between the Ellis semigroup $E(\mathbb{X},\mathbb{R}^d)$ and the subsemigroup of the direct product $\left[ \mathbb{R}^{n+d}\right] _{\Sigma} \times \mathfrak{T}_{W,\Gamma }$ given by
\begin{align*}\bigsqcup _{\mathfrak{t}\in \mathfrak{T}_{W,\Gamma }} \left[ \langle \mathsf{C}_\mathfrak{t}\rangle \times\mathbb{R}^d\right]_{\Sigma} \times \left\lbrace \mathfrak{t}\right\rbrace .
\end{align*}

Moreover, this isomorphism becomes a homeomorphism when the above union is equipped with the following convergence class: $(z_\lambda,\mathfrak{t}_\lambda)\longrightarrow (z,\mathfrak{t})$  if and only if one can write $z_\lambda =[w_\lambda,s_\lambda]_{\Sigma}$ and $z =[w,s]_{\Sigma}$ such that

\begin{itemize}
\item[$\mathrm{(1)}$] $s_\lambda \longrightarrow s$ in $\mathbb{R}^d$,
\vspace{0.1cm}

\item[$\mathrm{(2)}$] $\forall \varepsilon >0, \exists \, \delta _\lambda >0 \text{ such that  } \; \mathsf{C}_{\mathfrak{t}_\lambda}(w_\lambda,\delta _\lambda)\subset \mathsf{C}_{\mathfrak{t}}(w,\varepsilon ) \; \text{  for large enough }\lambda $ in $\mathbb{R}^n$.
\vspace{0.2cm}
\end{itemize}

Finally, the Ellis semigroup $E(\mathbb{X},\mathbb{R}^d)$ has a first countable topology, and the dynamical system $(\mathbb{X},\mathbb{R}^d)$ is tame.
\end{theo}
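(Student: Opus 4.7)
My plan is to combine Theorem \ref{theo:suspension.ellis} with Theorem \ref{theo:principal.interne}, as the statement is explicitly presented as a consequence of these two results. From Theorem \ref{theo:suspension.ellis} any $\mathsf{g}\in E(\mathbb{X},\mathbb{R}^d)$ writes, uniquely up to the diagonal $\Gamma$-action, as $\tilde{g}-s$ with $g\in E(\Xi^\Gamma,\Gamma)$ and $s\in\mathbb{R}^d$; and from Theorem \ref{theo:principal.interne} the datum of $g$ is equivalent to a pair $(w_g,\mathfrak{t}_g)\in\mathbb{R}^n\times\mathfrak{T}_{W,\Gamma}$. Composing both identifications yields the candidate map $\mathsf{g}\mapsto([w_g,s]_\Sigma,\mathfrak{t}_\mathsf{g})$ into $[\mathbb{R}^{n+d}]_\Sigma\times\mathfrak{T}_{W,\Gamma}$. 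The first point to verify is well-definedness on the quotient: translating $g$ by $\gamma\in\Gamma$ replaces $w_g$ by $w_g+\gamma^*$ and leaves $\mathfrak{t}_g$ unchanged, so the diagonal $\Gamma$-action on $E(\Xi^\Gamma,\Gamma)\times\mathbb{R}^d$ transports precisely onto the $\Sigma$-action on $\mathbb{R}^{n+d}$, as $\Sigma$ is by construction the graph of the $*$-map.

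Identifying the image as $\bigsqcup_{\mathfrak{t}}[\langle\mathsf{C}_\mathfrak{t}\rangle\times\mathbb{R}^d]_\Sigma\times\{\mathfrak{t}\}$ will then follow from the description of allowed translations given by Theorem \ref{theo:principal.interne}: the admissible $w_g$ for cone type $\mathfrak{t}$ lie in $\langle\mathsf{C}_\mathfrak{t}\rangle+\Gamma^*$, but any $v+\gamma^*$ with $v\in\langle\mathsf{C}_\mathfrak{t}\rangle$ is equivalent modulo $\Sigma$ to $(v,s-\gamma)$, so the $\mathfrak{t}$-fiber in $[\mathbb{R}^{n+d}]_\Sigma$ reduces exactly to $[\langle\mathsf{C}_\mathfrak{t}\rangle\times\mathbb{R}^d]_\Sigma$. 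The semigroup structure is then inherited from the composition law $(\tilde{g}-s)\cdot(\tilde{g}'-s')=\widetilde{g\cdot g'}-(s+s')$ on the hull Ellis semigroup, combined with Proposition \ref{prop:morphism.type}, which ensures $\mathfrak{t}_{g\cdot g'}=\mathfrak{t}_g\cdot\mathfrak{t}_{g'}$.

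For the topological part I would transport the convergence class of Theorem \ref{theo:principal.interne} through the suspension. A net $\mathsf{g}_\lambda\to\mathsf{g}$ corresponds, after picking suitable representatives $(w_\lambda,s_\lambda)$ and $(w,s)$ of the $\Sigma$-classes $z_\lambda$ and $z$, to the combination of $s_\lambda\to s$ in $\mathbb{R}^d$ (condition $(i)$) and $g_\lambda\to g$ in $E(\Xi^\Gamma,\Gamma)$, which by Theorem \ref{theo:principal.interne} is exactly condition $(ii)$. First countability of $E(\mathbb{X},\mathbb{R}^d)$ then follows immediately: both $E(\Xi^\Gamma,\Gamma)$ and $\mathbb{R}^d$ are first countable, so their product is, and the quotient by the properly discontinuous diagonal $\Gamma$-action preserves first countability. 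Tameness of $(\mathbb{X},\mathbb{R}^d)$ then follows from the criterion noted immediately after Theorem \ref{theo:dichotomy}, since $\mathbb{X}$ is compact metrisable.

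The most delicate step I anticipate is the bookkeeping around representatives in condition $(ii)$: the plain cone $\mathsf{C}_{\mathfrak{t}_\lambda}(w_\lambda,\delta_\lambda)$ depends on the chosen lift $w_\lambda\in\mathbb{R}^n$ of $z_\lambda$, so one must show that convergence in the quotient always admits lifts for which the internal criterion of Theorem \ref{theo:principal.interne} applies. The freedom to translate a lift by an element of $\Sigma$, combined with properness of the intermediate projections, should reduce this to a routine selection argument.
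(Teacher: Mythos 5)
Your proposal is correct and follows exactly the route the paper itself takes: the paper offers no separate argument for Theorem \ref{theo:principal} beyond declaring it ``completely deduced'' from Theorems \ref{theo:suspension.ellis} and \ref{theo:principal.interne}, and your assembly of the two — well-definedness via the identification of the diagonal $\Gamma$-action with the $\Sigma$-action through the graph of the $*$-map, the image computation from $\mathbb{R}^n_{\mathfrak{t}}=\langle\mathsf{C}_{\mathfrak{t}}\rangle+\Gamma^*$, and first countability plus the remark after Theorem \ref{theo:dichotomy} for tameness — is precisely the intended derivation. Your closing caution about choosing compatible lifts for condition $(ii)$ is the right point to flag, and the properness of $\Pi^*\times id$ resolves it as you indicate.
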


\subsection{Additional algebraic features}

\paragraph{7.2.1 $\;$ Invertible Ellis transformations} One can naturally ask whether there are transformations in the hull Ellis semigroup which are invertible but not homeomorphisms given by the $\mathbb{R}^d$--action. It turns out that the answer is no: We have seen that any cone type $\mathfrak{t}\in \mathfrak{T}_{W,\Gamma}$ is idempotent, and thus an invertible transformation must corresponds to a couple of the form $(z,\mathfrak{o})$ where $\mathfrak{o}$ is the identity cone type in $\mathfrak{t}\in \mathfrak{T}_{W,\Gamma}$. Since the cone with cone type $\mathfrak{o}$ is precisely the trivial cone $\left\lbrace 0\right\rbrace $, its associated plain cone $\mathsf{C}_\mathfrak{o}$ is nothing but $\left\lbrace 0\right\rbrace $ and Theorem \ref{theo:principal} ensures that $z$ must be an element of the form $[0,s]_{\Sigma}$ in $\left[  \left\lbrace 0\right\rbrace \times \mathbb{R}^d\right] _{\Sigma}$. It follows that the underlying transformation is the homeomorphism arising from translation by the vector $s\in \mathbb{R}^d$.

\paragraph{7.2.2 $\;$ Range of Ellis transformations} It is natural to define on the Ellis semigroup $E(\mathbb{X},\mathbb{R}^d)$ a preorder by letting $\mathsf{g}\leqslant \mathsf{g}'$ whenever the \textit{range} of the mapping $\mathsf{g}$ is contained in that of $\mathsf{g}'$. By range we mean here the subset $r(\mathsf{g}):= \mathbb{X}.\mathsf{g}$ of the hull $\mathbb{X}$. When one considers idempotent transformations $\mathsf{q}$ and $\mathsf{q}'$ then it is easy to show that $\mathsf{q}\leqslant \mathsf{q}'$ when and only when one has $\mathsf{q} = \mathsf{q}.\mathsf{q}'$, thus turning this preorder into algebraic terms in this particular setting. In the case of an almost canonical hull Ellis semigroup we are able to describe this preorder in a quite elegant manner:

\begin{prop}\label{prop:range} For any transformations of $E(\mathbb{X},\mathbb{R}^d)$ we have the equivalence
\begin{align*} \mathsf{g}\leqslant \mathsf{g}' \quad \Longleftrightarrow \quad C_\mathsf{g} \leqslant C_{\mathsf{g}'}.
\end{align*}
\end{prop}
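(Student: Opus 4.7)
The plan is to reduce the statement to the case of idempotents, already handled by the identity $\mathsf{q}=\mathsf{q}.\mathsf{q}'$ recalled before the proposition, and then to translate that identity into the face-semigroup order via Theorem \ref{theo:principal}. The central reduction is to show that $r(\mathsf{g})$ depends only on the cone type $\mathfrak{t}_\mathsf{g}$, so that the inclusion $r(\mathsf{g})\subseteq r(\mathsf{g}')$ becomes an inclusion between ranges of idempotents.

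The preliminary observation making this possible is that, for each $\mathfrak{t}\in\mathfrak{T}_{W,\Gamma}$, the stratum $\mathbb{G}_\mathfrak{t}:=\left[\langle\mathsf{C}_\mathfrak{t}\rangle\times\mathbb{R}^d\right]_\Sigma\times\{\mathfrak{t}\}$ appearing in Theorem \ref{theo:principal} is a group for the componentwise product: its first coordinate ranges in an abelian group, and its second is rigid by the idempotency $\mathfrak{t}.\mathfrak{t}=\mathfrak{t}$. The identity is $\mathsf{q}_\mathfrak{t}:=(0,\mathfrak{t})$, and each $\mathsf{g}=(z_\mathsf{g},\mathfrak{t}_\mathsf{g})\in\mathbb{G}_{\mathfrak{t}_\mathsf{g}}$ admits an inverse $\mathsf{g}^{-1}:=(-z_\mathsf{g},\mathfrak{t}_\mathsf{g})$ satisfying $\mathsf{g}.\mathsf{g}^{-1}=\mathsf{g}^{-1}.\mathsf{g}=\mathsf{q}_{\mathfrak{t}_\mathsf{g}}$.

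I would then prove $r(\mathsf{g})=r(\mathsf{q}_{\mathfrak{t}_\mathsf{g}})$. The inclusion $\subseteq$ comes from $\mathsf{g}.\mathsf{q}_{\mathfrak{t}_\mathsf{g}}=\mathsf{g}$, which yields $r(\mathsf{g}).\mathsf{q}_{\mathfrak{t}_\mathsf{g}}=r(\mathsf{g})$ and so places $r(\mathsf{g})$ inside the fixed-point set $r(\mathsf{q}_{\mathfrak{t}_\mathsf{g}})$ of the idempotent; the same argument applied to $\mathsf{g}^{-1}$ gives $r(\mathsf{g}^{-1})\subseteq r(\mathsf{q}_{\mathfrak{t}_\mathsf{g}})$. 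For the reverse inclusion, the chain $r(\mathsf{q}_{\mathfrak{t}_\mathsf{g}})=r(\mathsf{g}.\mathsf{g}^{-1})=r(\mathsf{g}).\mathsf{g}^{-1}\subseteq r(\mathsf{g}^{-1})\subseteq r(\mathsf{q}_{\mathfrak{t}_\mathsf{g}})$ forces $r(\mathsf{g}^{-1})=r(\mathsf{q}_{\mathfrak{t}_\mathsf{g}})$, and exchanging the roles of $\mathsf{g}$ and $\mathsf{g}^{-1}$ then yields $r(\mathsf{g})=r(\mathsf{q}_{\mathfrak{t}_\mathsf{g}})$.

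The proposition then follows by a chain of equivalences: $\mathsf{g}\leqslant\mathsf{g}'$ is $r(\mathsf{q}_{\mathfrak{t}_\mathsf{g}})\subseteq r(\mathsf{q}_{\mathfrak{t}_{\mathsf{g}'}})$, which by the idempotent criterion is $\mathsf{q}_{\mathfrak{t}_\mathsf{g}}=\mathsf{q}_{\mathfrak{t}_\mathsf{g}}.\mathsf{q}_{\mathfrak{t}_{\mathsf{g}'}}$, which componentwise via Theorem \ref{theo:principal} becomes $\mathfrak{t}_\mathsf{g}=\mathfrak{t}_\mathsf{g}.\mathfrak{t}_{\mathsf{g}'}$ in the face semigroup, and this is precisely the order relation $C_\mathsf{g}\leqslant C_{\mathsf{g}'}$. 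The hard part is the range equality $r(\mathsf{g})=r(\mathsf{q}_{\mathfrak{t}_\mathsf{g}})$: as $\mathsf{g}$ is in general neither injective nor surjective on $\mathbb{X}$, the algebraic inverse $\mathsf{g}^{-1}$ in the Sch\"utzenberger group $\mathbb{G}_{\mathfrak{t}_\mathsf{g}}$ is essential, serving as a partial inverse that makes the comparison of the two ranges possible.
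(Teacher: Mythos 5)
Your proof is correct and follows essentially the same route as the paper: it identifies each stratum $\left[\langle\mathsf{C}_\mathfrak{t}\rangle\times\mathbb{R}^d\right]_{\Sigma}\times\{\mathfrak{t}\}$ as a group, reduces the range comparison to the idempotents $[0]_{\Sigma}\times\{\mathfrak{t}\}$, and invokes the criterion $\mathsf{q}=\mathsf{q}.\mathsf{q}'$ to land on $\mathfrak{t}_{\mathsf{g}}=\mathfrak{t}_{\mathsf{g}}.\mathfrak{t}_{\mathsf{g}'}$. The only difference is that you spell out, via the group inverse, the equality $r(\mathsf{g})=r(\mathsf{q}_{\mathfrak{t}_\mathsf{g}})$ that the paper dispatches with ``one can see that''.
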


The proposition above asserts that the range of $\mathsf{g}$ is contained into the range of $\mathsf{g}'$ if and only if the cone $C_{\mathsf{g}'}$ is equal or a lower dimensional facet of the cone $C_\mathsf{g}$.\\

\begin{proof} Let $\mathsf{g}$ and $ \mathsf{g}'$ be chosen. Each are element of a subgroup respectively given by $\left[ \langle \mathsf{C}_{\mathfrak{t}_\mathsf{g}}\rangle \times\mathbb{R}^d\right]_{\Sigma} \times \left\lbrace \mathfrak{t}_\mathsf{g}\right\rbrace $ and $\left[ \langle \mathsf{C}_{\mathfrak{t}_{\mathsf{g}'}}\rangle \times\mathbb{R}^d\right]_{\Sigma} \times \left\lbrace \mathfrak{t}_{\mathsf{g}'}\right\rbrace $, and thus one can see that $r(\mathsf{g})= r([0]_{\Sigma}\times\left\lbrace \mathfrak{t}_\mathsf{g}\right\rbrace )$ and that $r(\mathsf{g}')= r([0]_{\Sigma}\times\left\lbrace \mathfrak{t}_{\mathsf{g}'}\right\rbrace )$. From what have just been said it becomes clear that $\mathsf{g}\leqslant \mathsf{g}'$ if and only if $\mathfrak{t}_{\mathsf{g}}=\mathfrak{t}_{\mathsf{g}}.\mathfrak{t}_{\mathsf{g}'}$, which exactly means that the cone $C_{\mathsf{g}'}$ is equal or a lower-dimensional facet of the cone $C_\mathsf{g}$, or equivalently  $C_\mathsf{g} \leqslant C_{\mathsf{g}'}$.
\end{proof}

\paragraph{7.2.3 $\;$ Ideals} The general theory of Ellis semigroups gives great importance to the ideal theory of an Ellis semigroup. In the case of a almost canonical hull Ellis semigroup it is easy to prove the proposition stated below, showing that the ideal theory of the hull Ellis semigroup reduces to the ideal theory of the semigroup $\mathfrak{T}_{W,\Gamma}$:

\begin{prop}\label{prop:min} Each right ideal $\mathfrak{M}$ of the nontrivial face semigroup $\mathfrak{T}_{W,\Gamma}$ defines a right ideal of the Ellis semigroup $E(\mathbb{X},\mathbb{R}^d)$ by
\begin{align*}\bigsqcup _{\mathfrak{t}\in \mathfrak{M}} \left[ \langle \mathsf{C}_\mathfrak{t}\rangle \times\mathbb{R}^d\right]_{\Sigma} \times \left\lbrace \mathfrak{t}\right\rbrace 
\end{align*}
and conversely each right ideal of $E(\mathbb{X},\mathbb{R}^d)$ arises in this manner.
\end{prop}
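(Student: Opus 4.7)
The plan is to reduce the statement to properties of the surjective semigroup morphism $\varphi \colon E(\mathbb{X},\mathbb{R}^d) \twoheadrightarrow \mathfrak{T}_{W,\Gamma}$ defined by $\mathsf{g} \mapsto \mathfrak{t}_\mathsf{g}$, whose morphism and surjectivity properties are part of theorem \ref{theo:principal}. The proposition amounts to the assertion that $\mathfrak{I} \mapsto \varphi(\mathfrak{I})$ and $\mathfrak{M} \mapsto \varphi^{-1}(\mathfrak{M})$ are mutually inverse bijections between right ideals on both sides, and that $\varphi^{-1}(\mathfrak{M})$ is identified, under the isomorphism of theorem \ref{theo:principal}, with the disjoint union displayed in the statement.

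The forward direction is immediate from the morphism property: if $\mathfrak{M}$ is a right ideal of $\mathfrak{T}_{W,\Gamma}$, then for any $\mathsf{g}\in \varphi^{-1}(\mathfrak{M})$ and any $\mathsf{h}\in E(\mathbb{X},\mathbb{R}^d)$ one has $\varphi(\mathsf{g}.\mathsf{h})=\mathfrak{t}_\mathsf{g}.\mathfrak{t}_\mathsf{h}\in \mathfrak{M}$, so $\varphi^{-1}(\mathfrak{M})$ is a right ideal. For the converse, let $\mathfrak{I}$ be a right ideal of $E(\mathbb{X},\mathbb{R}^d)$ and set $\mathfrak{M}:=\varphi(\mathfrak{I})$. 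That $\mathfrak{M}$ is a right ideal follows from surjectivity of $\varphi$: given $\mathsf{g}\in \mathfrak{I}$ and $\mathfrak{t}'\in \mathfrak{T}_{W,\Gamma}$, pick any lift $\mathsf{h}$ of $\mathfrak{t}'$; then $\mathsf{g}.\mathsf{h}\in \mathfrak{I}$ and $\mathfrak{t}_\mathsf{g}.\mathfrak{t}'=\varphi(\mathsf{g}.\mathsf{h})\in \mathfrak{M}$. The inclusion $\mathfrak{I}\subseteq \varphi^{-1}(\mathfrak{M})$ is tautological, so everything reduces to establishing the reverse inclusion.

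This last inclusion is the main step. Given $\mathsf{g}'\in \varphi^{-1}(\mathfrak{M})$, pick $\mathsf{g}\in \mathfrak{I}$ with $\mathfrak{t}_\mathsf{g}=\mathfrak{t}_{\mathsf{g}'}$. The key observation is that, under the isomorphism of theorem \ref{theo:principal}, the fiber $\varphi^{-1}(\mathfrak{t}_\mathsf{g})$ identifies with the Abelian group $\left[ \langle \mathsf{C}_{\mathfrak{t}_\mathsf{g}}\rangle \times \mathbb{R}^d\right]_{\Sigma}$, and the product in the ambient direct product semigroup restricts within this fiber to the ordinary group law (since $\mathfrak{t}_\mathsf{g}$ is idempotent, the second coordinate is preserved and the first coordinates add). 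In particular this fiber is a genuine subgroup of $E(\mathbb{X},\mathbb{R}^d)$ under the inherited composition. Denoting by $\mathsf{g}^{-1}$ the inverse of $\mathsf{g}$ computed inside this fiber, the transformation $\mathsf{h}:=\mathsf{g}^{-1}.\mathsf{g}'$ lies in $E(\mathbb{X},\mathbb{R}^d)$ and satisfies $\mathsf{g}.\mathsf{h}=\mathsf{g}'$, so that $\mathsf{g}'\in \mathfrak{I}$ by the right ideal property of $\mathfrak{I}$. The only delicate point in the whole argument is precisely this group-theoretic structure on fibers of $\varphi$, which is exactly what the explicit form of the image in theorem \ref{theo:principal} provides.
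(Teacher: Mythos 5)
Your argument is correct. The paper itself supplies no proof of this proposition (it is dismissed as ``easy to prove''), but your reduction --- the fibers of the cone-type morphism $\mathsf{g}\mapsto\mathfrak{t}_{\mathsf{g}}$ are the subgroups $\left[\langle\mathsf{C}_{\mathfrak{t}}\rangle\times\mathbb{R}^d\right]_{\Sigma}\times\lbrace\mathfrak{t}\rbrace$ of theorem \ref{theo:principal}, so that any right ideal absorbs each whole fiber it meets and is therefore saturated under this morphism --- is precisely the mechanism the author exploits in the adjacent proposition on ranges, and is surely the intended argument.
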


We can in particular easily identify the unique minimal ideal of $E(\mathbb{X},\mathbb{R}^d)$: This latter is isomorphic with the direct product $\left[ \mathbb{R}^{n+d}\right] _{\Sigma} \times \mathfrak{M}^{\, ch}$ where $\mathfrak{M}^{\, ch}$ is the family of cone types associated with the \textit{chambers} of the stratification defined by the collection of hyperplanes used to construct the face semigroup $\mathfrak{T}_W$.

\subsection{An explicit computation}

We consider the hull $\mathbb{X}_{oct}$ associated to the real cut and project scheme and octagonal window presented in \ref{subsection:explicit.example}. The associated family of linear hyperplanes parallel to faces of the window (or its reversed set) is described, in the orthonormal basis $(e_1^*, e_2^*)$ of the internal space $\mathbb{R}^2_{int}$, as
\begin{align*} H_1:= \langle v_1\rangle = \langle v_2-v_4\rangle , \quad H_2:= \langle v_2\rangle= \langle v_1+v_3\rangle  \\ H_3:= \langle v_3\rangle =\langle v_2+v_4\rangle ,  \quad H_4:= \langle v_4\rangle =\langle v_1-v_3\rangle ,
\end{align*}

where
\begin{align*} v_1:=e_1^*, \quad v_2:=(e_1^*+e_2^*)\diagup \sqrt{2}, \quad v_3:=e_2^*, \quad v_4:=(e_2^*-e_1^*)\diagup\sqrt{2};
\end{align*}

see Figure 4.

\includegraphics[trim = 7cm 18.5cm 8.5cm 4.8cm]{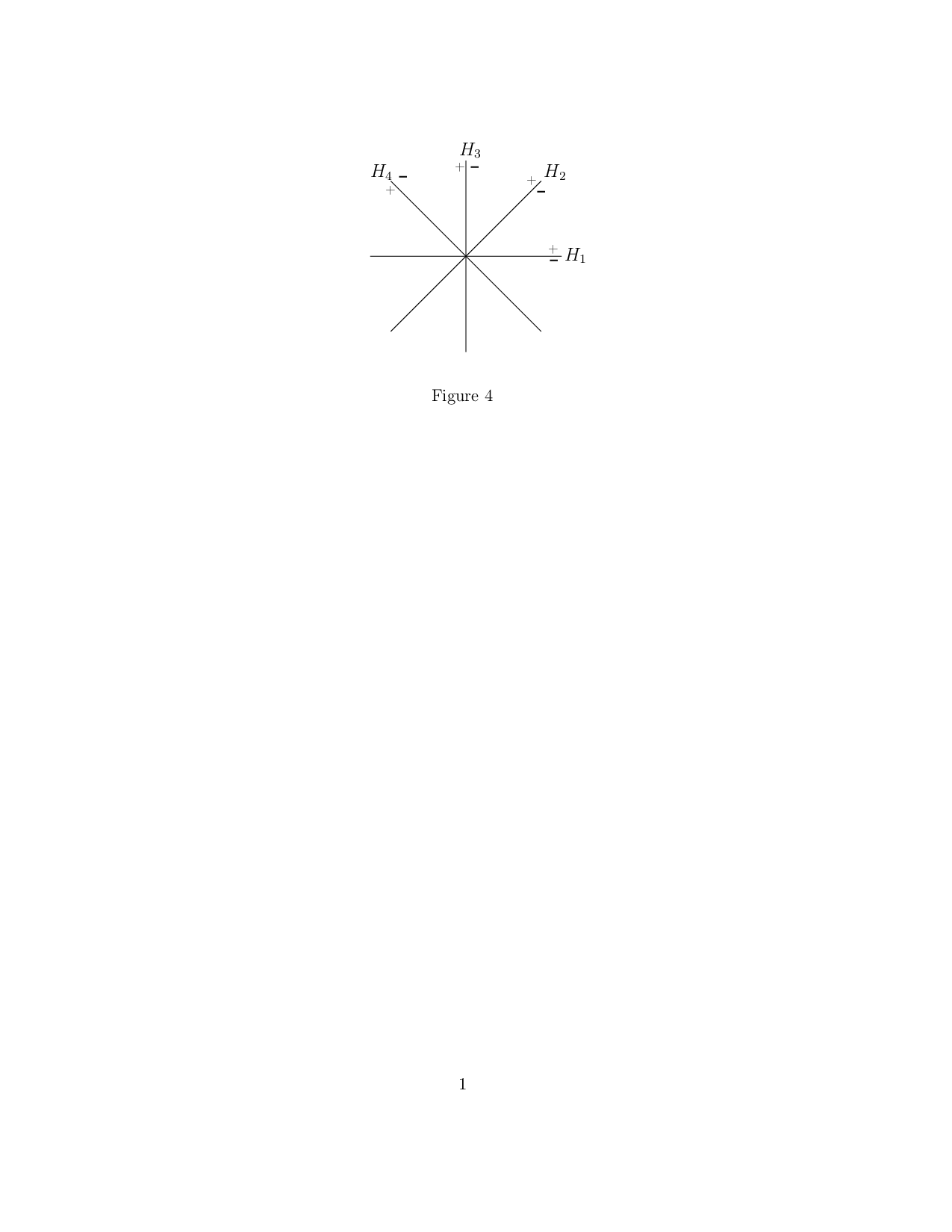}
\includegraphics[trim = 6cm 18.6cm 8.5cm 6cm]{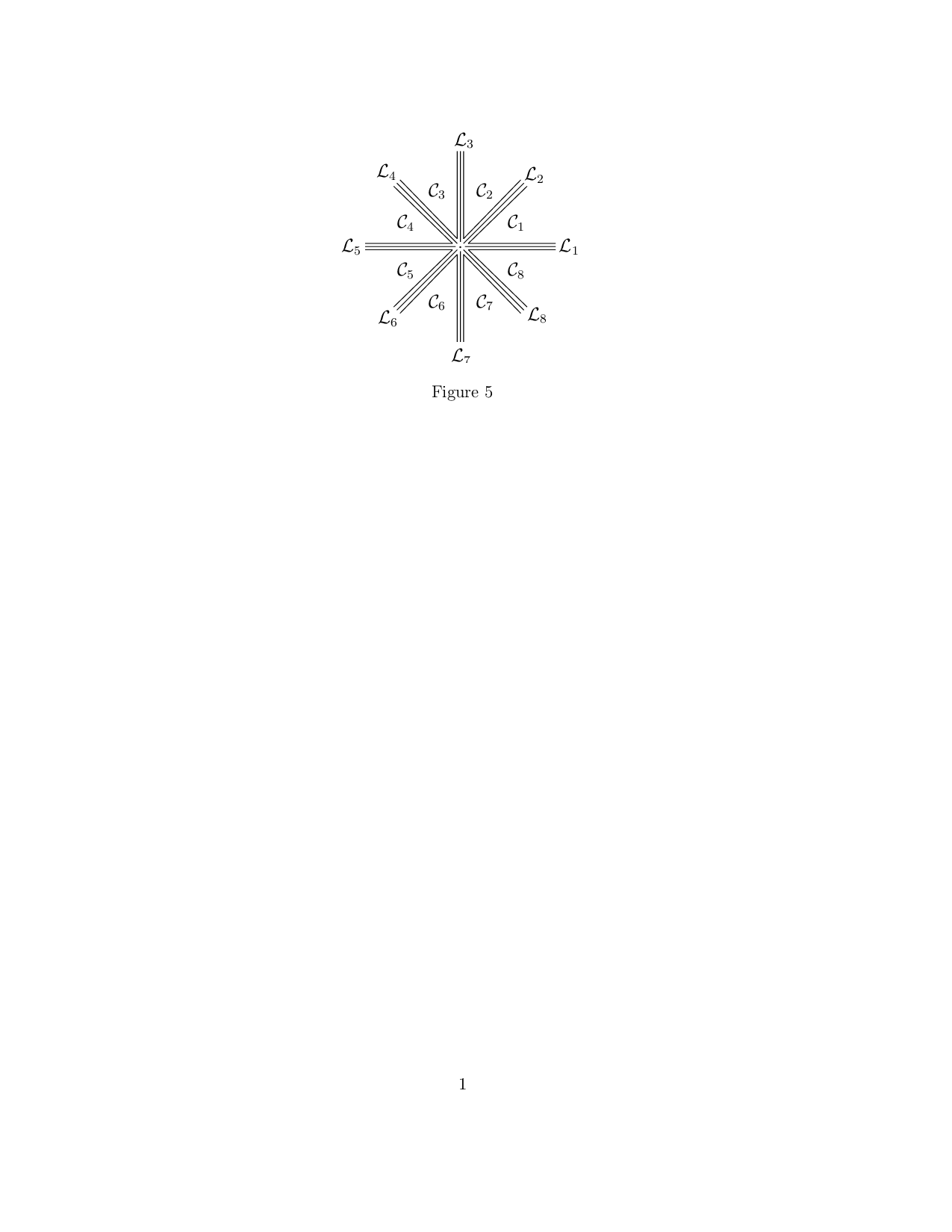}

The stratification obtained from these hyperplanes is of the form in Figure 5.
The internal space $\mathbb{R}^2_{int}$ is partitioned into $17$ different cones: the singleton $\lbrace 0\rbrace $, eight half-lines $\lbrace L_1,...,L_8\rbrace $ pointed at $0$ though not containing it which we label $L_i$, $L_{i+4}\, \subset H_i$ for $1\leqslant i\leqslant 4$, and eight chambers $\lbrace C_1,...,C_8\rbrace $, each consisting of an $\frac{1}{8}^{th}$ part of the space and being open cones pointed at $0$.
\vspace{0.2cm}

Now the stabilizers $Stab_{\Gamma}(H_i)$ are dense in $H_i$ for each index $1\leqslant i\leqslant 4$, and we deduce that each cone of this stratification is nontrivial, and moreover equal to its associated plain cone. Thus $\langle \mathsf{C}_\mathfrak{o}\rangle =\lbrace 0\rbrace$ as usual, whereas $\langle \mathsf{C}_{\mathfrak{t}_{L_i}}\rangle = \langle \mathsf{C}_{\mathfrak{t}_{L_{i+4}}}\rangle = H_i$ for each value $1\leqslant i\leqslant 4$, and $\langle \mathsf{C}_{\mathfrak{t}_{C_i}}\rangle = \mathbb{R}^2$ for each index $1\leqslant i \leqslant 8$.\\

Consequently, the hull Ellis semigroup $E(\mathbb{X}_{oct}, \mathbb{R}^2) $ is in this case obtained as 
\begin{align*}\left( \bigsqcup _{i=1}^8\left[ \mathbb{R}^4\right] _{\mathbb{Z}^4}\times \lbrace \mathfrak{t}_{C_i}\rbrace \right) \bigsqcup \left( \bigsqcup _{i=1}^4 \left[ H_i \times \mathbb{R}^2\right]  _{\mathbb{Z}^4}\times \lbrace \mathfrak{t}_{L_i}, \mathfrak{t}_{L_{i+4}}\rbrace \right)  \bigsqcup \mathbb{R}^2 .
\end{align*}

\section{\large{\textbf{The Ellis action on the hull}}}\label{action}

\subsection{A further look on cones}

We saw in Section $5$ that to any model set $\Lambda$ of the internal system can be associated a cone $C_\Lambda$, that is, an open connected cone pointed at $0$ with boundary delimited by hyperplanes of a subfamily $\mathfrak{H}_{w_\Lambda}$ of $\mathfrak{H}_W$. Moreover each such cone admits a unique cone type $\mathfrak{c}_\Lambda$ with domain $\mathfrak{H}_{w_\Lambda}$, and there can be only finitely many such cone types, whose family is denoted $\mathfrak{C}$. Now if one look at some model set $\Lambda _0$ in the hull $\mathbb{X}$ then it always can be written $\Lambda _0=\Lambda -t$, where $\Lambda$ lies in the internal system and $t$ is a vector of $\mathbb{R}^d$. This presentation is unique up to a translation of both the model set $\Lambda $ and the vector $t$ by some $\gamma \in \Gamma$. Thus one may without misunderstanding define the cut type $\mathfrak{H}_{z_{\Lambda_0}}$ and the cone type $\mathfrak{c}_{\Lambda _0}$ with domain $\mathfrak{H}_{z_{\Lambda_0}}$ to be the ones associated with $\Lambda \in \Xi ^\Gamma$ in the decomposition $\Lambda _0=\Lambda -t$. We may then describe the hull, as was already done by Le \cite{Le}, as follows:

\begin{theo}\label{theo:topologie.par.cones} The mapping associating to any model set $\Lambda $ the couple $(z_\Lambda,\mathfrak{c}_\Lambda)$ establishes a bijective correspondence between the hull $\mathbb{X}$ and
\begin{align*}\left\lbrace (z,\mathfrak{c})\in \left[ \mathbb{R}^{n+d}\right]  _{\Sigma}\times\mathfrak{C} \; \vert \; dom(\mathfrak{c})= \mathfrak{H}_{z} \right\rbrace  .
\end{align*}
\end{theo}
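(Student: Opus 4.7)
The plan is to prove well-definedness of the map, then check the image condition, then establish injectivity and surjectivity separately, making heavy use of propositions \ref{prop:parametrization.map.interne} and \ref{prop:local.topology.internal.system}.

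First I would handle well-definedness. Given $\Lambda = \Lambda_1 - t_1 = \Lambda_2 - t_2$ with $\Lambda_i \in \Xi^{\Gamma}$, the supports force $t_1 - t_2 =: \gamma \in \Gamma$, so $\Lambda_2 = \Lambda_1.(-\gamma)$. By $\Gamma$-equivariance of $\Pi$ one has $w_{\Lambda_2} = w_{\Lambda_1} - \gamma^*$, hence $[w_{\Lambda_1}, t_1]_\Sigma = [w_{\Lambda_2}, t_2]_\Sigma$ in $[\mathbb{R}^{n+d}]_\Sigma$, giving a well defined $z_\Lambda$. Using the translation identity $[A]_\Xi . \gamma = [A+\gamma^*]_\Xi$ from the proof of proposition \ref{prop:topology.internal.system}, one sees that $\Lambda_1 \in [H^\pm + w_{\Lambda_1}]_\Xi$ iff $\Lambda_2 \in [H^\pm + w_{\Lambda_2}]_\Xi$, so the cone type (defined through these clopen memberships) is also independent of the decomposition. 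The image condition $\mathrm{dom}(\mathfrak{c}_\Lambda) = \mathfrak{H}_{z_\Lambda}$ is then just a restatement of the domain computation carried out in proposition \ref{prop:local.topology.internal.system}, once one notes that $\mathfrak{H}_w$ is $\Gamma^*$-invariant, so $\mathfrak{H}_{z_\Lambda}$ is well defined as $\mathfrak{H}_{w_{\Lambda'}}$ for any representative.

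For injectivity, suppose $(z_{\Lambda^A}, \mathfrak{c}_{\Lambda^A}) = (z_{\Lambda^B}, \mathfrak{c}_{\Lambda^B})$. Writing $\Lambda^A = \Lambda_A - t_A$, $\Lambda^B = \Lambda_B - t_B$ and applying a common $\Gamma$-shift (valid by the well-definedness step), I may assume $w_{\Lambda_A} = w_{\Lambda_B} =: w$ and $t_A = t_B$. So the task reduces to showing that a model set in $\Xi^\Gamma$ is determined by its image $w$ under $\Pi$ together with its cone type $\mathfrak{c}$. But proposition \ref{prop:local.topology.internal.system}, via the Boolean rules and lemma \ref{lem:base.voisinages.ouverts}, shows exactly this: $[C_\Lambda + w]_\Xi$ is a clopen subset of $\Xi^\Gamma$ whose intersection with the fibre $\Pi^{-1}(w)$ is the singleton $\{\Lambda\}$. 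Since $\Lambda_A$ and $\Lambda_B$ lie in the same such clopen set and over the same $w$, they coincide, hence $\Lambda^A = \Lambda^B$.

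For surjectivity, fix $(z,\mathfrak{c})$ with $\mathrm{dom}(\mathfrak{c}) = \mathfrak{H}_z$ and pick a representative $z = [w,t]_\Sigma$. The cone $C := \bigcap_{H \in \mathrm{dom}(\mathfrak{c})} H^{\mathfrak{c}(H)}$ is non-empty by definition of $\mathfrak{C}$, and being an open cone pointed at $0$ it has $w$ in the closure of its translate $C+w$. Since $NS$ is dense in $\mathbb{R}^n$ by Baire, one obtains a sequence $w_k \in (C+w) \cap NS$ converging to $w$, and the non-singular model sets $\mathfrak{P}(w_k + W) \in \Xi^\Gamma$ live in a compact neighborhood of the fibre $\Pi^{-1}(w)$ by the properness of $\Pi$ established in proposition \ref{prop:internal.morphism.ellis}. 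Extracting a convergent subsequence yields a limit $\Lambda' \in \Xi^\Gamma$ with $\Pi(\Lambda') = w$, and since the $\mathfrak{P}(w_k + W)$ lie in the closed set $[C+w]_\Xi$ (by its very definition as the closure of non-singular model sets associated with $(C+w) \cap NS$), so does $\Lambda'$. The Boolean rules of proposition \ref{prop:topology.internal.system} then give $\Lambda' \in [H^{\mathfrak{c}(H)} + w]_\Xi$ for each $H \in \mathfrak{H}_w$, so $\mathfrak{c}_{\Lambda'} = \mathfrak{c}$, and setting $\Lambda := \Lambda' - t$ gives the preimage.

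The only slightly delicate point is checking that the clopen set $[C+w]_\Xi$ meets $\Pi^{-1}(w)$ in exactly one point in the injectivity step; but this is already contained in the argument for proposition \ref{prop:local.topology.internal.system}, so there is no new obstacle. The rest of the argument is essentially bookkeeping of the $\Sigma$- and $\Gamma^*$-invariance properties, combined with properness of $\Pi$ and density of $NS$.
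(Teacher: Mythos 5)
Your proof is correct and follows essentially the same route as the paper: reduce to the internal system, use the fact from Proposition \ref{prop:local.topology.internal.system} that $[C_\Lambda+w_\Lambda]_\Xi$ meets the fibre $\Pi^{-1}(w_\Lambda)$ only in $\Lambda$ for injectivity, and use density of $NS$ in $C+w$ together with properness of $\Pi$ to produce a limit model set for surjectivity. The paper phrases the surjectivity step with filterbases and accumulation points rather than extracted subsequences, and is terser about the $\Gamma$-quotient bookkeeping, but the substance is identical.
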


\begin{proof} From what has been just said it is sufficient to prove that the mapping associating to any model set $\Lambda $ in $\Xi ^\Gamma$ the couple $(w_\Lambda,\mathfrak{c}_\Lambda)$ establishes a bijective correspondence between the internal system $\Xi ^\Gamma$ and
\begin{align*}\left\lbrace (w,\mathfrak{c})\in  \mathbb{R}^n \times\mathfrak{C} \; \vert \; dom(\mathfrak{c})= \mathfrak{H}_{w} \right\rbrace  .
\end{align*}
First from the very construction of the cone type $\mathfrak{c}_\Lambda$ associated with any $\Lambda \in \Xi ^\Gamma$ this association is well defined. By the arguments used in the proof of Proposition \ref{prop:topology.internal.system}, each model set $\Lambda\in \Xi ^\Gamma$ is the limit of a filter base (\ref{nei}) which only depends on the couple $(w_\Lambda, \mathfrak{c}_\Lambda)$, and thus the association is one-to-one.
Moreover this association is onto: If $(w, \mathfrak{c})$ is a couple with $dom(\mathfrak{c})$ equal to $\mathfrak{H}_w$, then consider the family of subsets $ [C_{\mathfrak{c}}+w]_\Xi \cap \Pi ^{-1}(B(w,\varepsilon))$ of the internal system. Each such set contains some nonsingular model sets, and thus forms a filter base in $\Xi ^\Gamma$. As $\Pi$ is a proper map this filter base is eventually contained in a compact subset of the form $\Pi ^{-1}(\overline{B}(w,\varepsilon))$ and thus admits an accumulation element $\Lambda$. This latter must satisfies $\Pi (\Lambda)=w_\Lambda =w$ and $C_\Lambda= C_{\mathfrak{c}}$ on the other hand. But as the domains of $\mathfrak{c}$ and $\mathfrak{c}_\Lambda$ are both equal to the cut type of $w_\Lambda =w$ the couple $(w_\Lambda,\mathfrak{c}_\Lambda)$ is nothing but $(w,\mathfrak{c})$, showing that the association is onto.
\end{proof}

\subsection{The Ellis action}

We wish to use here the descriptions of the hull obtained in the above paragraph and that of its Ellis semigroup performed in Theorem \ref{theo:principal}. To this end we set an action of the nontrivial face semigroup $\mathfrak{T}_{W,\Gamma}$ on the family $\mathfrak{C}$ of cone types introduced above:

\vspace{0.2cm}
For $\mathfrak{c}\in \mathfrak{C}$ and $\mathfrak{t}\in \mathfrak{T}_{W,\Gamma}$ let us define a map $\mathfrak{H}_W \longrightarrow \left\lbrace -,+,\infty \right\rbrace $ as
\begin{align*}
\mathfrak{c}.\mathfrak{t}(H):= \begin{cases} \mathfrak{c}(H) \; \; $ if $\mathfrak{t}(H)= 0 ,\\ \mathfrak{t}(H)\; \, \; $ else. $  \end{cases}
\end{align*}
This definition is not properly an action of $\mathfrak{T}_{W,\Gamma}$ on $\mathfrak{C}$ as the resulting map may not be a cone issuing from any model set of the hull $\mathbb{X}$. However it allows us to recover the Ellis action as follows:

\begin{prop}\label{prop:ellis.action} The Ellis action $\mathbb{X}\times E(\mathbb{X}, \mathbb{R}^d )\longrightarrow \mathbb{X}$ obtains as
\begin{align*}(z,\mathfrak{c}).(z',\mathfrak{t})= (z+z',\mathfrak{c}') \quad \text{where} \quad \mathfrak{c}'(H):= \begin{cases} \mathfrak{c}.\mathfrak{t}(H) \quad \text{ if }H\in \mathfrak{H}_{z+z'}, \\ \infty  \qquad \; \; \text{ else. } \end{cases}
\end{align*}
\end{prop}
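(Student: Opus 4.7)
The plan is to reduce the computation of the Ellis action on the hull to one on the internal system via Theorem~\ref{theo:suspension.ellis}, and then compute the resulting cone type hyperplane-by-hyperplane by combining the cornerstone Lemma~\ref{lem:cornerstone} with the Ellis-side tripartition of Lemma~\ref{lem:ellis.clopen}. Writing $\Lambda=\Lambda_0-t$ with $\Lambda_0\in\Xi^\Gamma$ and $\mathsf{g}=\tilde g-s$ with $g\in E(\Xi^\Gamma,\Gamma)$, one has $\Lambda.\mathsf{g}=\Lambda_0.g-(t+s)$, and the equivariance of $\pi^*$ (together with that of $\Pi^*$) yields the parameter component $z_{\Lambda.\mathsf{g}}=z+z'$ immediately. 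Since the cone type is intrinsic to the internal-system representative and unaffected by the $\mathbb{R}^d$-shift, it remains to show $\mathfrak{c}_{\Lambda_0.g}=\mathfrak{c}'$, and the fact that the domain of $\mathfrak{c}_{\Lambda_0.g}$ is $\mathfrak{H}_{w_{\Lambda_0}+w_g}$ already accounts for the $\infty$ branch of the formula.

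Fix now a hyperplane $H\in\mathfrak{H}_{w_{\Lambda_0}+w_g}$, together with a face $f$ of $M$ and $\gamma\in\Gamma$ such that $A_f\parallel H$ and $w_{\Lambda_0}+w_g\in\gamma^*+A_f$; these choices yield the identification $A_f-w_{\Lambda_0.(-\gamma)}=H+w_g$. The membership $\Lambda_0.g\in[A_f^\pm+\gamma^*]_\Xi=[H^\pm+w_{\Lambda_0}+w_g]_\Xi$ is equivalent to $g\in V(\Lambda_0.(-\gamma),[A_f^\pm]_\Xi)$, and the cornerstone Lemma~\ref{lem:cornerstone} rewrites this latter subset as an explicit clopen $[H^{?0}+w_g]_E$ in the Ellis semigroup whose decoration depends on $\mathfrak{c}(H)=\mathfrak{c}_{\Lambda_0}(H)$. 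In parallel, Lemma~\ref{lem:ellis.clopen} pinpoints $g$ in a unique member of the tripartition $[H^-+w_g]_E\sqcup[H+w_g]_E\sqcup[H^++w_g]_E$, that member being indexed by $\mathfrak{t}(H)$. A direct case analysis on $\mathfrak{c}(H)\in\{-,+\}$ against $\mathfrak{t}(H)\in\{-,0,+\}$ then shows $\mathfrak{c}_{\Lambda_0.g}(H)=\mathfrak{c}(H)$ when $\mathfrak{t}(H)=0$ and $\mathfrak{c}_{\Lambda_0.g}(H)=\mathfrak{t}(H)$ otherwise, i.e.\ precisely $\mathfrak{c}.\mathfrak{t}(H)$.

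The main technical point, and the one place care is needed, is the case $\mathfrak{c}(H)=\infty$: the formula would spuriously predict $\mathfrak{c}.\mathfrak{t}(H)=\infty$ when $\mathfrak{t}(H)=0$, which would clash with $H\in\mathfrak{H}_{w_{\Lambda_0}+w_g}$. The resolution is that the $\infty$ case of the cornerstone lemma, after translation, yields the equality $[H^{+0}+w_g]_E=[H^++w_g]_E$, which by the tripartition forces $[H+w_g]_E=\emptyset$; hence no $g$ with the given $w_g$ can have $\mathfrak{t}(H)=0$, and the combination $\mathfrak{c}(H)=\infty$ with $\mathfrak{t}(H)=0$ is vacuous under the standing hypothesis. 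The remaining subcases $\mathfrak{t}(H)\in\{-,+\}$ then give $\mathfrak{c}_{\Lambda_0.g}(H)=\mathfrak{t}(H)=\mathfrak{c}.\mathfrak{t}(H)$ directly, completing the verification of the action formula.
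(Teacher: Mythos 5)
The paper states Proposition \ref{prop:ellis.action} without any proof, so there is no argument of the author's to compare yours against; judged on its own terms, your proof is correct and is the natural completion using exactly the machinery the paper develops. Your reduction via Theorem \ref{theo:suspension.ellis} to the internal system, the identification of the parameter component by equivariance of $\pi^*$, and the hyperplane-by-hyperplane computation of $\mathfrak{c}_{\Lambda_0.g}$ by playing Lemma \ref{lem:cornerstone} (applied to $\Lambda_0.(-\gamma)$, whose cone type equals $\mathfrak{c}$ and whose parameter shifts $A_f^{\pm}-w_{\Lambda_0.(-\gamma)}$ into $H^{\pm}+w_g$) against the tripartition of Lemma \ref{lem:ellis.clopen} all check out; the case table reproduces $\mathfrak{c}.\mathfrak{t}(H)$ in every admissible configuration. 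In particular you correctly isolate the one delicate point, namely that $\mathfrak{c}(H)=\infty$ together with $\mathfrak{t}(H)=0$ would contradict $H\in\mathfrak{H}_{z+z'}$, and your resolution is right: in that case $\Gamma^*$ misses $H+w_g$ entirely, so $\left[H+w_g\right]_E=\emptyset$ and the combination cannot occur. It is worth noting that your use of the cornerstone lemma here is cleaner than the technique the author uses for the closely analogous Proposition \ref{prop:morphism.type} (which computes $\mathfrak{t}_{g.h}$ by approximating $h$ by elements of $\Gamma$ and exploiting right-continuity); since the ``point side'' of the action is a model set rather than an Ellis transformation, Lemma \ref{lem:cornerstone} applies verbatim and no approximation argument is needed. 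The only cosmetic caveat is that one should recall the paper's convention that a hyperplane associated with two faces carries a common orientation, so that the identification $[A_f^{\pm}+\gamma^*]_\Xi=[H^{\pm}+w_{\Lambda_0}+w_g]_\Xi$ is sign-consistent; with that convention in place your argument is complete.
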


\subsection{An illustration of the Ellis action}

In order to illustrate the Ellis action described as above, we focus here on the example of the hull $\mathbb{X}_{oct}$ associated with the data given in Section \ref{subsection:explicit.example}. More precisely we won't describe the action of any transformation but rather the one of the idempotent transformations (as the other part is only a shifting in the parametrization torus $\left[  \mathbb{R}^4\right] _{\mathbb{Z}^4}$). Moreover it can be checked that the idempotent Ellis transformations are precisely those Ellis transformations mapped onto $0\in \left[  \mathbb{R}^4\right] _{\mathbb{Z}^4}$ under $\pi ^*$, or equivalently, those which preserve fibers in $\mathbb{X}_{oct}$ with respect to the parametrization map $\pi$. Here we won't describe the Ellis action of these idempotents at any model set, but rather on the single fiber above $0\in \left[  \mathbb{R}^4\right] _{\mathbb{Z}^4}$, any other fiber can be treated in the same manner.
\vspace{0.2cm}

First we need to know the cut type of $0$: it is easily checked that $\mathfrak{H}_0= \mathfrak{H}_{W_{oct}}= \left\lbrace H_1, H_2, H_3, H_4\right\rbrace $, so that the fiber above $0$ in the hull consists of eight model sets $ \; \;  $ $\left\lbrace \Lambda _{C_1},..,\Lambda _{C_8}\right\rbrace $, each associated with some cone which is in this particular case a \textit{chamber} among $\left\lbrace C_1,..,C_8\right\rbrace $. Then we can compute the action of any of the $17$ idempotent transformations $[0]_{\mathbb{Z}^4}\times\left\lbrace \mathfrak{t} \right\rbrace $ , $\mathfrak{t} \in \mathfrak{T}_{W_{oct}}$:\\

The identity map, given by $[0]_{\mathbb{Z}^4}\times\left\lbrace \mathfrak{o} \right\rbrace $, preserves any of the eight model sets, whereas any idempotent map $[0]_{\mathbb{Z}^4}\times\left\lbrace \mathfrak{t}_{C_i} \right\rbrace $ associated with the chamber $C_i$ maps all of these model sets onto a single one, namely $\Lambda _{C_i}$. For an idempotent map of the form $[0]_{\mathbb{Z}^4}\times\left\lbrace \mathfrak{t}_{L_i} \right\rbrace $ with $L_i$ some half-line contained in the hyperplane $H_i$, each model set with associated cone belonging to the side $\pm$ of $H_i$ is mapped onto the unique model set whose cone belongs to the same side $\pm$ of $H_i$ and has $L_i$ in its boundary. Therefore these transformations have two distinct model sets of this fiber in their range, namely these which have $L_i$ in the boundary of their associated cone.
\vspace{0.2cm}

\textbf{Acknowledgement} I wish to express my deep gratitude to my advisor Johannes Kellendonk for all its valuable suggestions, discussions and comments about the present article.




\end{document}